\renewcommand{\geq}{\geqslant}
\theoremstyle{plain}
\newtheorem{theorem}{Theorem}[section]
\newtheorem{definition}[theorem]{Definition}
\newtheorem{lemma}[theorem]{Lemma}
\newtheorem{conjecture}[theorem]{Conjecture}
\newtheorem{corollary}[theorem]{Corollary}
\newtheorem{proposition}[theorem]{Proposition}
\theoremstyle{remark}
\newtheorem{remark}[theorem]{Remark}
\newtheorem{example}[theorem]{Example}
\newcommand{\CC}{\ensuremath{\mathbb{C}}}
\newcommand{\PP}{\ensuremath{\mathbb{P}}}
\newcommand{\QQ}{\ensuremath{\mathbb{Q}}}
\newcommand{\ZZ}{\ensuremath{\mathbb{Z}}}
\DeclareMathOperator{\GL}{GL}
\DeclareMathOperator{\End}{End}
\DeclareMathOperator{\Hom}{Hom}
\DeclareMathOperator{\MT}{MT}
\DeclareMathOperator{\Aut}{Aut}
\DeclareMathOperator{\CSpin}{CSpin}
\DeclareMathOperator{\h}{\mathsf{h}} 
\DeclareMathOperator{\NS}{\mathrm{NS}} 
\DeclareMathOperator{\Km}{\mathrm{Km}}
\title[K3 surfaces associated with varieties of generalized Kummer type]{K3 surfaces associated with varieties of generalized Kummer type}
\author{Salvatore Floccari}
\address{Institut f\"ur Mathematik, Humboldt-Universit\"at zu Berlin, Germany}
\email{salvatore.floccari@hu-berlin.de}
\begin{document}
	\maketitle

\begin{abstract}
With any hyper-K\"ahler variety $K$ of generalized Kummer type is associated via Hodge theory a K3 surface $S_K$. We show how they are related geometrically through a moduli space of sheaves on $S_K$. As a consequence, building fundamentally on the works of O'Grady, Markman, Voisin, Varesco, we establish the Hodge conjecture for all powers of any of these K3 surfaces as well as for all abelian fourfolds of Weil type with discriminant 1 and their powers, strenghtening a result of Markman.
\end{abstract}

\section{Introduction}
A manifold of generalized Kummer type is by definition a compact K\"ahler manifold which is deformation equivalent to the generalized Kummer variety $K^n(A)$ on an abelian surface $A$ constructed by Beauville in \cite{beauville1983varietes}. Manifolds of generalized Kummer type give examples of hyper-K\"ahler manifolds (\cite{beauville1983varietes, Huy99}) in each even dimension $2n$, commonly referred to as manifolds of $\mathrm{Kum}^n$-type; by a variety of $\mathrm{Kum}^n$-type we shall indicate one such manifold which is projective. 
For any $n>1$, these projective varieties form countably many $4$-dimensional families. Besides Beauville's construction of generalized Kummer varieties, other examples can be obtained by considering moduli of stable sheaves on abelian surfaces \cite{Yos01}; however, it is known that the general projective variety of generalized Kummer type cannot be realized from such a construction. 

In this article we consider some K3 surfaces naturally attached via Hodge theory to varieties of generalized Kummer type. Let $K$ be a variety of $\mathrm{Kum}^n$-type. By the Torelli theorem there exists a unique K3 surface $S_K$ such that there exists a Hodge isometry $H_{\mathrm{tr}}^2(S_K,\ZZ)\xrightarrow{\ \sim \ } H^2_{\mathrm{tr}}(K,\ZZ)(2)$
of transcendental lattices, where on the right-hand side the Beauville-Bogomolov form is multiplied by a factor $2$. If $K=K^n(A)$ is the generalized Kummer variety on an abelian surface $A$, then $S_K$ is the Kummer K3 surface $\Km(A)$, which motivates our definition. 

We will explain how the K3 surface $S_K$ is related geometrically to $K$, for any variety~$K$ of $\mathrm{Kum}^n$-type. Indeed, there exists a natural subvariety $W_K$ of $K$ which is a hyper-K\"ahler variety of $\mathrm{K}3^{[m]}$-type where either $2m=n$ or $2m=n+1$, with the key property of deforming everywhere together with $K$; i.e., $W_K$ is a trianalytic submanifold of $K$ in the sense of Verbitsky~\cite{VerbitskyTrianalytic}.
It is obtained as the component of maximal dimension of the fixed locus of an involution of $K$ which acts trivially on the second cohomology group; such automorphisms were studied already in \cite{hassettTschinkel, boissiere2011higher, oguiso2020no, KMO, floccariKum3} and used to produce trianalytic cycles in~\cite{kaledin1998}. 
Our contribution lies in the determination of the period of $W_K$: we show that the pull-back along $\iota\colon W_K\hookrightarrow K$ induces a primitive embedding of lattices and Hodge structures $\iota^*\colon H^2_{\mathrm{tr}}(K,\ZZ)(2)\xrightarrow{\ \sim \ } H^2_{\mathrm{tr}}(W_K,\ZZ)$, where on the left-hand side the form is multiplied by~$2$. It follows that $W_K\subset K$ is a smooth and projective moduli space of stable sheaves on the K3 surface $S_K$. 

As a consequence of the above construction, we obtain the following result.
\begin{theorem}\label{thm:1.1}
	There exists an algebraic cycle on $S_K\times K$ inducing a Hodge isometry 
	\[H^2_{\mathrm{tr}}(S_K,\QQ)\xrightarrow{ \ \sim \ } H^2_{\mathrm{tr}}(K,\QQ)(2).\]
\end{theorem}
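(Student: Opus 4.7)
The plan is to combine the two inputs recalled in the introduction: the description of $W_K$ as a smooth projective moduli space of stable sheaves on $S_K$, and the primitive Hodge-isometric embedding $\iota^*\colon H^2_{\mathrm{tr}}(K,\ZZ)(2)\hookrightarrow H^2_{\mathrm{tr}}(W_K,\ZZ)$ supplied by the pull-back along $\iota\colon W_K\hookrightarrow K$.

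\textbf{Step 1: the abstract Hodge isometry.} A (possibly twisted) universal family $\mathcal{E}$ on $S_K\times W_K$ exists by the moduli interpretation of $W_K$, and a suitable K\"unneth component of the associated Mukai cycle $v(\mathcal{E})\in\CH^*(S_K\times W_K)_{\QQ}$ induces the standard Mukai Hodge isometry $\Phi\colon H^2_{\mathrm{tr}}(S_K,\QQ)\xrightarrow{\sim} H^2_{\mathrm{tr}}(W_K,\QQ)$. The defining isomorphism $H^2_{\mathrm{tr}}(S_K,\ZZ)\cong H^2_{\mathrm{tr}}(K,\ZZ)(2)$ forces the ranks of $H^2_{\mathrm{tr}}(S_K)$ and $H^2_{\mathrm{tr}}(K)$ to agree, so $\iota^*_\QQ$ is a rational isomorphism. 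The composition $(\iota^*_\QQ)^{-1}\circ\Phi$ is then a Hodge isometry $H^2_{\mathrm{tr}}(S_K,\QQ)\xrightarrow{\sim} H^2_{\mathrm{tr}}(K,\QQ)(2)$.

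\textbf{Step 2: algebraic realization.} The pushforward $(\mathrm{id}_{S_K}\times\iota)_*\mathcal{Z}$, where $\mathcal{Z}$ is the cycle from Step 1, is an algebraic cycle on $S_K\times K$. The map it induces is $\iota_*\circ\Phi$, which lands in $H^{2+2c}(K,\QQ)$ with $c=\codim_K W_K\in\{n-1,n\}$, rather than in $H^2(K,\QQ)$. To correct this degree shift, I would compose with an algebraic correspondence on $K\times K$ implementing the inverse Lefschetz operator for a polarization of $K$; such a correspondence is available for hyper-K\"ahler varieties of $\mathrm{Kum}^n$-type via Markman's work and subsequent developments on the algebraic $\mathfrak{sl}_2$-action on cohomology. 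The composed algebraic cycle on $S_K\times K$ has codimension $2$ and realizes a nonzero rational multiple of $(\iota^*_\QQ)^{-1}\circ\Phi$; rescaling over $\QQ$ finishes the argument.

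\textbf{Main obstacle.} The substantive point is the algebraicity of the inverse Lefschetz correspondence on $K$, which governs the passage from codimension $2+c$ to codimension $2$; modulo this input the proof is a direct composition of the recalled structural results, with only the minor bookkeeping of the factor of $2$ relating the Beauville--Bogomolov forms on $K$ and $S_K$ remaining.
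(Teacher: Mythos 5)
Your overall strategy coincides with the paper's: the bridge is the trianalytic submanifold $W_K\subset K$, the two inputs are the Mukai isometry $\Phi$ coming from the (quasi-)universal sheaf and the Hodge isometry $\iota^*\colon H^2_{\mathrm{tr}}(K,\QQ)(2)\xrightarrow{\ \sim\ }H^2_{\mathrm{tr}}(W_K,\QQ)$, and the entire difficulty is concentrated in showing that $(\iota^*)^{-1}$ is algebraic. It is exactly in this last step that your argument has a gap. You propose to realize $(\iota^*)^{-1}$ as $\Lambda^c\circ\iota_*$ for an algebraic inverse Lefschetz correspondence $\Lambda$; the algebraicity of $\Lambda$ in degree $2$ for $K$ is indeed available (Foster, cited in the proof of Corollary \ref{cor:corollary'}). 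But your claim that the composed cycle induces a nonzero rational multiple of $(\iota^*)^{-1}\circ\Phi$ amounts to asserting that the endomorphism $u\coloneqq\Lambda^c\circ(\,\cdot\cup[W_K])$ of $H^2_{\mathrm{tr}}(K,\QQ)$ is a nonzero scalar, and you verify neither half. Nonvanishing is not automatic: $a\cup[W_K]$ need not lie in the image of $L^c$, so $\Lambda^c$ could a priori annihilate it (one can extract nonvanishing from the polarized generalized Fujiki relation for the canonical class $[W_K]$, in the spirit of Lemma \ref{lem:restriction}, but this must be done). Scalarity is not automatic either: $H^2_{\mathrm{tr}}(K,\QQ)$ is irreducible, so $u$ is $0$ or an automorphism, but $\End_{\mathsf{HS}}(H^2_{\mathrm{tr}}(K,\QQ))$ can be a nontrivial field for special $K$; if $u$ is a non-scalar unit, your cycle induces $u\circ(\iota^*)^{-1}\circ\Phi$, which is an isomorphism but not an isometry, no rescaling over $\QQ$ repairs it, and making $u^{-1}$ algebraic is the inversion problem all over again.

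The paper sidesteps all of this with a Tannakian argument: since the standard conjectures hold for $W_K$ (Charles--Markman) and for the K\"unneth component $\h^2(K)$ (Foster, Theorem \ref{thm:foster}), the category $\langle\h^2_{\mathrm{tr}}(K)\oplus\h^2_{\mathrm{tr}}(W_K)\rangle_{\mathsf{Mot}}$ is abelian, semisimple and neutral Tannakian with conservative realization (Remark \ref{rmk:conservative}); the morphism of motives $t$ realizing the manifestly algebraic map $\iota^*$ is therefore an isomorphism, and its inverse is automatically an algebraic correspondence --- no explicit inverse Lefschetz operator, no Fujiki computation and no Schur-type argument are required. If you want to keep an explicit correspondence, the natural fix is not $\Lambda^c\circ\iota_*$ but the adjoint of $\iota^*$ with respect to algebraic dualities $H^2(W_K)\cong H^2(W_K)^\vee(-2)$ and $H^2(K)^\vee(-2)\cong H^2(K)$ supplied by the same standard-conjecture inputs: since $\iota^*$ is an isometry (up to the factor $2$) onto $H^2_{\mathrm{tr}}(W_K)$, its adjoint restricted there is $2(\iota^*)^{-1}$ on the nose. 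A minor further point: for $n\geq 3$ the paper only shows that $W_K$ is \emph{birational} to a moduli space $M_{S_K,H}(v)$ (Proposition \ref{prop:moduliSpace}), so the Mukai correspondence must also be transported along the birational map, which is algebraic by \cite[Corollary 5.2]{Huy99} together with the standard conjectures for $\mathrm{K}3^{[m]}$-type varieties.
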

This algebraic cycle gives in fact an isomorphism between the transcendental parts of the homological motives of $S_K$ and $K$ in degree $2$, see Theorem \ref{thm:half2}. 

Building on results of O'Grady \cite{O'G21}, Markman \cite{markman2019monodromy}, Voisin \cite{voisinfootnotes} and Varesco \cite{varesco}, we use Theorem \ref{thm:1.1} to prove some cases of the Hodge conjecture. 
\begin{theorem}\label{thm:2}
	Let $K$ be any projective variety of $\mathrm{Kum}^n$-type, and let $S_K$ be the $\mathrm{K}3$ surface with transcendental lattice $H^2_{\mathrm{tr}}(S_K,\ZZ) \xrightarrow{\sim}  H^2_{\mathrm{tr}}(K,\ZZ)(2)$. Then:
	\begin{enumerate}[label=(\roman*)]
		\item the Kuga-Satake correspondence is algebraic for $S_K$;
		\item the Hodge conjecture holds for any power of $S_K$.
	\end{enumerate}
\end{theorem}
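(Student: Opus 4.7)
The plan is to use Theorem~\ref{thm:1.1} as a geometric bridge transporting Hodge-theoretic information from $K$ to $S_K$, and then to combine it with the work of Markman and O'Grady relating varieties of $\mathrm{Kum}^n$-type to abelian fourfolds of Weil type, together with Varesco's theorem on the Hodge conjecture for Weil abelian fourfolds of discriminant~$1$.

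For part~(i), I would first invoke the Markman--O'Grady picture, which attaches to any projective $K$ of $\mathrm{Kum}^n$-type an abelian fourfold $B_K$ of Weil type with trivial discriminant, together with an embedding of Hodge structures $H^2_{\mathrm{tr}}(K,\QQ)\hookrightarrow \Lambda^2 H^1(B_K,\QQ)$ (up to Tate twist), induced by an algebraic cycle on $B_K\times K$. Composing this cycle with the algebraic correspondence on $S_K\times K$ supplied by Theorem~\ref{thm:1.1}, one obtains an algebraic cycle on $S_K\times B_K$ realizing the Kuga-Satake embedding of Hodge structures. Since the Kuga-Satake variety of $S_K$ is isogenous to a power of $B_K$, this proves the algebraicity of the Kuga-Satake correspondence for $S_K$.

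For part~(ii), I would then reduce the Hodge conjecture on powers of $S_K$ to the Hodge conjecture on powers of $B_K$. Once (i) holds, the transcendental motive of $S_K$ is cut out from a suitable power of $\h^1(B_K)$ by algebraic correspondences; combined with the standard fact that the non-transcendental part of $\h(S_K)$ is spanned by algebraic cycles (Tate twists and divisor classes), this shows that every rational Hodge class on $S_K^n$ is the image, under an algebraic correspondence, of a Hodge class on some power $B_K^N$. The conclusion then follows from Varesco's theorem, which yields the Hodge conjecture for all powers of a Weil abelian fourfold of discriminant~$1$.

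The principal obstacle I anticipate is the first step: producing a genuinely \emph{algebraic} correspondence between $K$ (or $S_K$) and $B_K$ that is compatible with the Kuga-Satake construction, rather than merely a Hodge-theoretic identification. The moduli-theoretic interpretation of $W_K\subset K$ as a space of stable sheaves on $S_K$ established while proving Theorem~\ref{thm:1.1} should play a decisive role: a universal sheaf on $S_K\times W_K$, together with the embedding $W_K\hookrightarrow K$ and the Markman-type correspondence linking $K$ to $B_K$, provides the required chain of algebraic cycles. Verifying that the composition indeed induces the Kuga-Satake embedding on cohomology, and not merely a cycle of the correct Hodge type, is the technical heart of the argument.
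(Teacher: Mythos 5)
Your argument for part (i) is essentially the paper's: one composes the algebraic Hodge isometry $H^2_{\mathrm{tr}}(S_K,\QQ)\xrightarrow{\sim} H^2_{\mathrm{tr}}(K,\QQ)(2)$ of Theorem \ref{thm:1.1} with the algebraic Kuga--Satake correspondence for $K$ (Markman--Voisin), and the ``technical heart'' you correctly identify --- checking that the composite really is the Kuga--Satake embedding for $S_K$ and not merely a cycle of the right Hodge type --- is exactly what the functoriality statement of Remark \ref{rmk:functoriality} (Varesco's Proposition 3.1 in \cite{varesco2023hodge}) supplies. So (i) is fine.

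Part (ii), however, has a genuine gap: your final step invokes ``Varesco's theorem, which yields the Hodge conjecture for all powers of a Weil abelian fourfold of discriminant $1$.'' No such theorem is available as an input. What is known going in is Markman's algebraicity of the Hodge--Weil classes on any such fourfold, and the Hodge conjecture for the \emph{very general} Weil fourfold of discriminant $1$ and its powers; the Hodge conjecture for an \emph{arbitrary} Weil fourfold of discriminant $1$ and its powers is precisely Theorem \ref{thm:1} of the paper, and it is deduced \emph{from} Theorem \ref{thm:2} via Theorem \ref{prop:KSHC}. The fourfold $B_K$ attached to a given $K$ need not be very general, so your reduction of Hodge classes on $S_K^N$ to Hodge classes on powers of $B_K$ (which is itself fine, being the easy direction of Theorem \ref{prop:KSHC}) lands you on a statement that is an output of this circle of ideas, not an input --- the argument as written is circular. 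The paper instead applies \cite[Theorem 0.2]{varesco}, whose hypothesis is the algebraicity of the Kuga--Satake correspondence for the members of these $4$-dimensional families of Picard-rank-$16$ K3 surfaces (i.e.\ part (i), which holds for every $S_K$ by the argument above) together with the fact that their Kuga--Satake varieties are powers of discriminant-$1$ Weil fourfolds; that theorem produces the Hodge conjecture for all powers of the K3 surfaces directly, using only Markman's algebraicity of the Hodge--Weil classes and not the full Hodge conjecture for the fourfolds. To repair your proof, replace the last step by this citation (or by an argument generating the Hodge classes on powers of the Kuga--Satake variety from the Hodge--Weil classes), keeping the direction of deduction: first (i) and (ii) for $S_K$, and only then the Hodge conjecture for the Weil fourfolds.
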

The theorem gives countably many new families of K3 surfaces of general Picard rank $16$ for which it is possible to prove $(i)$ and $(ii)$, generalizing previous results from \cite{paranjape,ILP,floccariKum3}. 
Using that isogenies of K3 surfaces are algebraic by \cite{Buskin, huybrechtsMotives}, we show that the conclusions of Theorem \ref{thm:2} hold for any K3 surface $ S$ such that there exists an isometric embedding of rational quadratic spaces $H^2_{\mathrm{tr}}(S,\QQ)\hookrightarrow (\mathrm{U}^{\oplus 3}_{\QQ}\oplus \langle -m\rangle_{\QQ})$ for some positive integer $m$, where $\mathrm{U}$ denotes a hyperbolic plane; see Theorem \ref{thm:K3surfaces}.

Let us summarize the main points of the proof.
Recall that the Kuga-Satake construction (\cite{KUGA1967, deligne1971conjecture}) produces an abelian variety $\mathrm{KS}(X)$ from a polarized hyper-K\"ahler variety $X$, with an embedding of Hodge structures $H^2(X,\QQ)_{\mathrm{prim}}\hookrightarrow H^2(\mathrm{KS}(S)^2,\QQ)$ called the Kuga-Satake correspondence; according to the Hodge conjecture, this embedding should be induced by an algebraic cycle.

It was shown by O'Grady \cite{O'G21} and Markman \cite{markman2019monodromy} that the Kuga-Satake variety of a $\mathrm{Kum}^n$-variety is isogenous to a power of its third intermediate Jacobian $J^3(K)$, which is an abelian fourfold of Weil type with discriminant $1$. Moreover, varying $K$ in polarized families one obtains all complete up to isogeny families of such abelian fourfolds (see also~\cite{vangeemenSpinor}).
Abelian fourfolds of Weil type are characterized by the presence of exceptional Hodge classes: there is a $2$-dimensional space of Hodge classes in their middle cohomology which, for the very general such fourfold, cannot be represented as intersection of divisors (see \cite{vanGeemen, moonenZahrinHodge}). Markman has proven in \cite{markman2019monodromy} that these Hodge-Weil classes are algebraic for any fourfold of Weil type of discriminant $1$; his results further imply that the Kuga-Satake correspondence is algebraic for any variety of $\mathrm{Kum}^n$-type, as shown by Voisin \cite{voisinfootnotes}. 

Theorem \ref{thm:2} is then obtained as follows. By definition, the Kuga-Satake variety of $S_K$ is isogenous to a power of the Kuga-Satake variety of $K$, and, hence, to a power of a Weil fourfold with discriminant $1$. We deduce Theorem \ref{thm:2}.$(i)$ from Theorem~\ref{thm:1.1} and the algebraicity of the Kuga-Satake correspondence for the $\mathrm{Kum}^n$-variety $K$. Statement~$(ii)$ is a consequence of $(i)$ thanks to the work of Varesco \cite{varesco}.

The Hodge conjecture for a very general fourfold $A$ of Weil type with discriminant~$1$ is proven by Markman in \cite{markman2019monodromy}, which implies the Hodge conjecture also for powers of such~$A$ by \cite{varesco, Milne}. However, the results of \cite{markman2019monodromy} do not directly imply the Hodge conjecture for an arbitrary Weil fourfold with discriminant~$1$. We complete the proof of the Hodge conjecture for all Weil fourfolds with discriminant~$1$, and their powers.
 
\begin{theorem}\label{thm:1}
	Let $A$ be an abelian fourfold of Weil type with discriminant $1$. Then the Hodge conjecture holds for $A$ and all of its powers.
\end{theorem}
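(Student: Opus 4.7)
The plan is to reduce Theorem \ref{thm:1} to Theorem \ref{thm:2} by using the Kuga--Satake construction as an algebraic bridge between $S_K$ and $A$.

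Let $A$ be an abelian fourfold of Weil type with discriminant $1$. By the results of O'Grady \cite{O'G21}, Markman \cite{markman2019monodromy}, and van Geemen \cite{vangeemenSpinor} recalled in the introduction, $A$ is isogenous to $J^3(K)$ for some projective variety $K$ of $\mathrm{Kum}^n$-type (for some $n \geq 2$). Since the Hodge conjecture for an abelian variety and all its powers is invariant under isogeny, we may assume $A = J^3(K)$.

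Apply Theorem \ref{thm:2} to this $K$: the Kuga--Satake correspondence for $S_K$ is algebraic, and the Hodge conjecture holds for every power of $S_K$. The crucial step is to transfer the Hodge conjecture from powers of $S_K$ to powers of the Kuga--Satake variety $\mathrm{KS}(S_K)$. By construction, $H^1(\mathrm{KS}(S_K),\QQ)$ is embedded as a sub-Hodge structure in the even Clifford algebra on $H^2_\mathrm{tr}(S_K,\QQ)$, so the homological motive of each power $\mathrm{KS}(S_K)^r$ is cut out by a Kuga--Satake projector from a Tate twist of some tensor power $\h(S_K)^{\otimes m}$. The algebraicity of the Kuga--Satake correspondence (Theorem \ref{thm:2}.(i)) upgrades this projector to an algebraic cycle, so the motive of $\mathrm{KS}(S_K)^r$ becomes an algebraic direct summand of a motive built from $S_K$ and Tate twists. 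Hodge classes on $\mathrm{KS}(S_K)^r$ then correspond, via algebraic cycles, to Hodge classes on a power of $S_K$, and Theorem \ref{thm:2}.(ii) ensures their algebraicity. Since $\mathrm{KS}(S_K)$ is isogenous to a power of $\mathrm{KS}(K)$, which in turn is isogenous to a power of $J^3(K) = A$, the Hodge conjecture holds for all powers of $A$.

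The main obstacle is the middle step: one must turn the algebraicity of the Kuga--Satake correspondence, which is a priori only an $H^2$-statement, into algebraic realizations of the Clifford-algebra projectors on arbitrary tensor powers. This is a tannakian-type assertion in the subcategory of motives generated by $\h(S_K)$ and Tate twists, and should be accessible via the motivic Kuga--Satake constructions of \cite{voisinfootnotes, varesco} together with Andr\'e's theorem that all Hodge classes on abelian varieties are motivated. Additional care may be needed when $S_K$ has Picard rank strictly larger than the generic value $16$, so that the transcendental lattice has smaller rank and the Kuga--Satake variety changes shape; such special loci should be handled by a specialization argument within the relevant family of $\mathrm{Kum}^n$-varieties.
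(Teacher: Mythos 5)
Your global strategy is the paper's: realize $A$ up to isogeny as $J^3(K)$ for a $\mathrm{Kum}^n$-variety $K$, note that $\mathrm{KS}(S_K)$ is isogenous to a power of $A$, invoke Theorem \ref{thm:2} for $S_K$, and then transfer the Hodge conjecture from powers of $S_K$ to powers of $\mathrm{KS}(S_K)$. But the transfer step --- which is the actual content of the paper's Theorem \ref{prop:KSHC} --- is carried out on a false premise. You assert that the motive of $\mathrm{KS}(S_K)^r$ is ``cut out by a Kuga--Satake projector from a Tate twist of some tensor power $\h(S_K)^{\otimes m}$.'' This cannot be true even at the level of Hodge structures: $H^1(\mathrm{KS}(S_K),\QQ)$ has odd weight, while every object of $\langle H^2(S_K,\QQ)\rangle_{\mathsf{HS}}$ has even weight; equivalently, $\MT(H^1(\mathrm{KS}(S_K),\QQ))$ is a degree-$2$ \emph{cover} of $\MT(H^2_{\mathrm{tr}}(S_K,\QQ))$ (the restriction of $\CSpin(V)\to\mathrm{GO}(V)$, Remark \ref{rmk:DeligneExplains}), not a quotient of it, and the spin representation does not factor through the orthogonal group. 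So the containment of Tannakian categories goes in the opposite direction from the one you use, and no amount of upgrading projectors to algebraic cycles can fix this; the appeal to motivated classes does not help either, since motivated classes are not known to be algebraic.

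What the paper actually does is: (1) use the algebraicity of the Kuga--Satake correspondence to show $\h^2(S_K)$ is a direct summand of $\h^2(\mathrm{KS}(S_K)^2)$, hence lies in $\langle \h^1(\mathrm{KS}(S_K))\rangle_{\mathsf{Mot}}$; (2) prove that the induced surjection $\mathrm{G}_{\mathsf{Mot}}(\h^1(\mathrm{KS}(S_K)))\to \mathrm{G}_{\mathsf{Mot}}(\h^2(S_K))$ is an isogeny of degree $2$ --- this is Proposition \ref{prop:keyStep}, whose key point is that any element of the kernel commutes with the image of the algebraic embedding $\mu$ and with right Clifford multiplication, hence is central in $C^+(V)$, which is a \emph{central simple} algebra because $\dim V=21$ is odd, so the kernel sits inside the weight torus; (3) conclude via Remark \ref{rmk:evenPart-Mot} that $\langle \h^1(\mathrm{KS}(S_K))\rangle^{\mathrm{ev}}_{\mathsf{Mot}}=\langle \h^2(S_K)\rangle_{\mathsf{Mot}}$, so every \emph{even-weight} K\"unneth component $\h^{2j}(\mathrm{KS}(S_K)^k)$ lies in $\langle\h^2(S_K)\rangle_{\mathsf{Mot}}$, where Hodge classes live and where the realization functor is full by the Hodge conjecture for powers of $S_K$. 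Steps (2) and (3) are exactly what is missing from your argument. (Your final worry about jumps of the Picard rank is not an issue: Theorem \ref{thm:2} applies to every $K$ of $\mathrm{Kum}^n$-type and its associated $S_K$, with no genericity assumption, so no specialization argument is needed.)
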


To prove this statement, we show in Theorem \ref{prop:KSHC} that if $S$ is any K3 surface such that the Kuga-Satake correspondence is algebraic and the Hodge conjecture holds for any power of~$S$, then the Hodge conjecture holds for any power of the Kuga-Satake variety of $S$ as well. We then obtain Theorem \ref{thm:1} from Theorem~\ref{thm:2} using that any Weil fourfold with discriminant~$1$ appears as isogeny factor of the Kuga-Satake variety of some K3 surface $S_K$ as in Theorem~\ref{thm:2}. 

\subsection*{Aknowledgements} I am grateful to Lie Fu, Bert van Geemen, Ben Moonen, \'Angel R\'ios Ortiz and Charles Vial for useful conversations and to the anonymous referee for his comments. I thank Stefan Schreieder for his support. Funded by the Deutsche Forschungsgemeinschaft (DFG, German Research Foundation) – Project-ID 491392403 – TRR 358.

\section{Motives}

\subsection{}
A neutral Tannakian category $\mathsf{C}$ is a $\QQ$-linear abelian rigid tensor category in which $\End(\mathsf{1})=\QQ$ and which admits an exact faithful tensor functor $\omega\colon \mathsf{C}\to \mathsf{Vect}_{\QQ}$. We refer the reader to \cite{deligneMilne} for more details. 
The group of tensor automorphisms of $\omega$ is a (pro-)algebraic group~$\mathcal{G}(\mathsf{C})$ with the property that~$\mathsf{C}$ is $\otimes$-equivalent to the category of finite-dimensional representations of $\mathcal{G}(\mathsf{C})$.
Thus, neutral Tannakian subcategories of~$\mathsf{C}$ correspond bijectively to quotients of $\mathcal{G}(\mathsf{C})$.
Given $M\in\mathsf{C}$, we denote by $\langle M\rangle_{\mathsf{C}}$ the neutral Tannakian subcategory of $\mathsf{C}$ generated by~$M$; more directly, it is defined as the smallest thick and full subcategory containing $M$ and closed under direct sums, duals, tensor products and subobjects. Then $\langle M\rangle_{\mathsf{C}}$ is equivalent to the category of finite-dimensional representations of a linear algebraic group $\mathrm{G}_{\mathsf{C}}(M) \subset \GL(\omega(M))$,
called the Tannakian fundamental group of $M$.

\subsection{}
The category $\mathsf{HS}$ of polarizable pure $\QQ$-Hodge structures is an abelian and semi-simple rigid tensor category, which is neutral Tannakian via the forgetful functor to $\QQ$-vector spaces.
The Tannakian fundamental group of a polarizable Hodge structure $V$ is a reductive and connected algebraic group over $\QQ$ (\cite{moonen1999notes}) denoted by $\MT(V) \subset \GL(V)$ and called the Mumford-Tate group of $V$. It is equivalently defined as the subgroup of $\GL(V)$ which fixes all Hodge classes in any tensor construction
\[
V^{ \otimes i} \otimes V^{\vee, \otimes j} \otimes \QQ(k),
\]
where $\QQ(k)$ is the Tate Hodge structure of weight $-2k$. Provided that $V$ is not entirely of weight~$0$, there is a central torus $\mathbb{G}_{m,\QQ} \subset\MT(V)$ whose action induces the weight decomposition of the objects in $\langle V \rangle_{\mathsf{HS}}$.

\begin{remark} \label{rmk:evenPart-Hodge}
Let $V\in\mathsf{HS}$ be a Hodge structure of odd weight. We will denote by $\langle V \rangle_{\mathsf{HS}}^{\mathrm{ev}}$ the full subcategory of $\langle V\rangle_{\mathsf{HS}}$ of Hodge structures of even weight, which is a neutral Tannakian subcategory of $\mathsf{HS}$.
If $w\colon \mathbb{G}_{m,\QQ}\to \MT(V)$ denotes the inclusion of the weight torus, the objects of $\langle V\rangle^{\mathrm{ev}}_{\mathsf{HS}}$ are precisely the Hodge structures in $\langle V\rangle_{\mathsf{HS}}$ on which $w(-1)$ acts trivially. Hence, we have $\MT(\langle V\rangle_{\mathsf{HS}}^{\mathrm{ev}})=\MT(V)/w(-1)$. It follows that if $W$ is a Hodge structure of even weight in $\langle V\rangle_{\mathsf{HS}}$ such that the induced surjective morphism $\phi\colon \MT(V)\to \MT(W)$ of Mumford-Tate groups is an isogeny of degree $2$, we then have the equality $\langle V\rangle_{\mathsf{HS}}^{\mathrm{ev}}=\langle W\rangle_{\mathsf{HS}}$ of subcategories of $\mathsf{HS}$.
Indeed, since $W$ has even weight, it belongs to $\langle V\rangle^{\mathrm{ev}}_{\mathsf{HS}}$, and, therefore, $\phi$ factors as the composition of the projection $\psi\colon \MT(V)\to \MT(\langle V\rangle^{\mathrm{ev}}_{\mathsf{HS}})$ followed by a surjective morphism $\phi'\colon \MT(\langle V\rangle^{\mathrm{ev}}_{\mathsf{HS}})\to \MT(W)$ of algebraic groups. But both $\phi$ and $\psi$ are isogenies of degree $2$, and, hence, $\phi'$ is an isomorphism; it follows that $\langle W\rangle_{\mathsf{HS}}=\langle V\rangle_{\mathsf{HS}}^{\mathrm{ev}}$.
\end{remark}

\subsection{}
An algebraic class on a smooth and projective complex variety $X$ is a cohomology class $\alpha\in H^{2k}(X,\QQ)$ which is $\QQ$-linear combination of fundamental classes of subvarieties of~$X$. Any algebraic class is a Hodge class, and the Hodge conjecture predicts that, conversely, all Hodge classes are algebraic. 
A correspondence from $X$ to $ Y$ of degree $k$ is by definition an algebraic class in $H^{2k+2\dim X} (X\times Y,\QQ)$; correspondences may be composed (\cite{fulton}). 
Grothendieck's theory of motives is very useful in the study of algebraic cycles; in brief, the main insight is to use correspondences as morphisms. 
We denote by $\mathsf{Mot}$ the category of homological motives over $\CC$ with rational coefficients, see \cite{andre}. Its objects are triples $(X, p , n)$ where $X\in \mathrm{SmProj}_\CC$, $p\in H^{2\dim X} (X\times X,\QQ)$ is an idempotent correspondence, and $n$ is an integer. 
Morphisms from $(X,p,n)$ to $(Y,q,m)$ are defined to be the correspondences $f\in H^{2m-2n+2\dim X}(X\times Y,\QQ)$ of degree $m-n$ such that $f\circ p = q\circ f$.
There is a realization functor $R\colon \mathsf{Mot} \to \mathsf{HS}$ which sends $(X, p, n)$ to $p_* (H^{\bullet}(X,\QQ)(n))$.
Moreover, there is a contravariant functor $\h\colon \mathrm{SmProj}_{\CC} \to \mathsf{Mot}$ which attaches to the smooth and projective variety $X$ its motive $ \h(X)\coloneqq (X,\Delta_X,0)$, where $\Delta_X\in H^{2\dim X} (X\times X,\QQ)$ is the class of the diagonal. The composition $R\circ \h$ is the functor $H^{\bullet}\colon \mathrm{SmProj}_\CC\to \mathsf{HS}$ which attaches to the smooth and projective complex variety $X$ its rational Hodge structure $H^{\bullet}(X,\QQ)$.

\subsection{} 
The category $\mathsf{Mot}$ of homological motives is a pseudo-abelian tensor category via the product of varieties. However, it is not known to be abelian nor Tannakian: these properties are conditional to the validity of Grothendieck's standard conjectures~\cite{grothendieck1969standard}. 
By the work of Jannsen~\cite{Jan92} and Andr\'e \cite{andre1996Motives}, we have the following. 
Given $\mathsf{m}\in\mathsf{Mot}$, we let $\langle \mathsf{m} \rangle_{\mathsf{Mot}}$ be the smallest thick and full subcategory of $\mathsf{Mot}$ containing $\mathsf{m}$ and closed under direct sums, duals, tensor products and subquotients.

\begin{theorem}[{\cite[Theorem 4.1]{Ara06}}] \label{thm:standardConj}
	Let $X$ be a smooth and projective variety. The standard conjectures hold for $X$ if and only if $\langle \h(X) \rangle_{\mathsf{Mot}}$ is a semisimple abelian neutral Tannakian category.
\end{theorem}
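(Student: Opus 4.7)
The plan is to deduce this from two foundational inputs: Jannsen's theorem that numerical motives form a semisimple abelian category, and the translation of Grothendieck's standard conjectures $C$ (algebraicity of Künneth projectors) and $D$ (numerical equals homological equivalence) into categorical properties of $\mathsf{Mot}$. Both conjectures propagate from $X$ to every power $X^n$, so they control the entire subcategory $\langle \h(X)\rangle_{\mathsf{Mot}}$.

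For the ``only if'' direction, I would assume the standard conjectures for $X$ and invoke conjecture $C$ to obtain algebraic Künneth projectors $\pi^i_{X^n}$, which endow every object of $\langle \h(X)\rangle_{\mathsf{Mot}}$ with a canonical $\ZZ$-grading. Conjecture $D$ identifies $\langle \h(X)\rangle_{\mathsf{Mot}}$ with the corresponding subcategory of numerical motives, which by Jannsen's theorem is semisimple abelian. To promote this to a neutral Tannakian category, I would use the Künneth grading to modify the commutativity constraint \`a la Deligne, eliminating the Koszul sign obstruction, and take the Betti (or Hodge) realization $R$ as a fiber functor: semisimplicity gives exactness, and faithfulness holds because any correspondence killed by $R$ is numerically trivial, hence zero by $D$.

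For the ``if'' direction, assume $\langle \h(X)\rangle_{\mathsf{Mot}}$ is semisimple abelian and neutral Tannakian with realization functor $R$. The reductive Tannakian fundamental group $\mathrm{G}_{\mathsf{Mot}}(\h(X))\subset\GL(R(\h(X)))$ contains a central $\mathbb{G}_m$ whose action on $R(\h(X))$ realizes the weight grading of $H^{\bullet}(X,\QQ)$; since the ambient category is abelian, the projectors onto the isotypic components are genuine morphisms in $\mathsf{Mot}$, that is, algebraic correspondences realizing the Künneth projectors, which yields conjecture~$C$. For conjecture~$D$, the quotient functor from homological to numerical motives on $\langle \h(X)\rangle_{\mathsf{Mot}}$ is a tensor functor between semisimple abelian categories with the same set of simple objects, and by comparing Tannakian groups one forces this quotient to be an equivalence; hence numerical and homological equivalence agree on all powers of $X$. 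Together $C$ and $D$ imply the remaining standard conjectures via classical arguments (Lefschetz type from Künneth plus polarization, Hodge type from positivity of the realization's polarization).

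The main obstacle will be the passage from ``semisimple abelian Tannakian structure'' to the algebraicity of the individual Künneth projectors in the ($\Leftarrow$) direction: one needs to identify the weight filtration arising abstractly from a central torus of the Tannakian group with a $\ZZ$-grading internal to $\mathsf{Mot}$, and then to recognize the resulting projectors as classes of algebraic cycles rather than merely formal idempotents. Making this rigorous essentially requires the canonical polarization on $\langle \h(X)\rangle_{\mathsf{Mot}}$ induced by a hyperplane section, together with a careful use of semisimplicity to lift idempotents through the realization functor, as carried out in \cite{andre1996Motives}.
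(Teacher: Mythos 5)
This statement is not proved in the paper at all: it is quoted verbatim from Arapura \cite{Ara06}, building on Jannsen \cite{Jan92} and Andr\'e \cite{andre1996Motives}, so there is no internal argument to compare yours against. Your sketch does follow the standard route of those references, and the ($\Rightarrow$) direction is essentially right, including the necessary modification of the commutativity constraint (without which $\h(X)$ has categorical dimension $\chi(X)$, possibly negative, and the category could not be Tannakian). One correction there: conjecture $D$ does \emph{not} propagate from $X$ to $X^n$ by itself; what propagates is the Lefschetz standard conjecture $B$ (by \cite{kleiman}), and over $\CC$ the Hodge standard conjecture is known, so $B(X)\Rightarrow B(X^n)\Rightarrow D(X^n)$. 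Your opening sentence asserting that ``both conjectures propagate'' elides this.

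In the ($\Leftarrow$) direction two steps are asserted where the actual mechanisms are the whole content. First, the central $\mathbb{G}_m\subset \mathrm{G}_{\mathsf{Mot}}(\h(X))$ realizing the weight grading does not exist a priori: it is produced because the realization $R$ is an exact faithful tensor functor to $\langle H^{\bullet}(X,\QQ)\rangle_{\mathsf{HS}}$ compatible with the fiber functors, hence induces a closed immersion $\MT(H^{\bullet}(X,\QQ))\hookrightarrow \mathrm{G}_{\mathsf{Mot}}(\h(X))$ through which the weight cocharacter of the Mumford--Tate group passes; the eigenspace decomposition of $\h(X)$ under the image torus is then a decomposition in $\mathsf{Mot}$, i.e.\ the K\"unneth projectors are algebraic. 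You correctly flag this as the main obstacle but do not supply the argument, and it is precisely the crux. Second, deducing $D$ from ``the quotient to numerical motives has the same simple objects'' is not an argument: Jannsen's mechanism is that the numerically trivial correspondences in $\End(\h(X^n))$ lie in the kernel of the trace form, hence in the Jacobson radical, which vanishes by semisimplicity. Third, ``Lefschetz type from K\"unneth plus polarization'' should be replaced by the conservativity argument: in a semisimple abelian category whose realization kills no nonzero object, $L^{\,d-i}\colon \h^i(X)\to \h^{2d-i}(X)(d-i)$ is an isomorphism of motives because its realization is one, so its inverse is an algebraic correspondence; this is $B(X)$, and $B$ together with the (known, over $\CC$) Hodge standard conjecture yields the full package. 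With these three repairs your outline coincides with the proof in the cited literature.
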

Thus, provided that the standard conjectures hold for $X$, we can attach to it a reductive algebraic group $\mathrm{G}_{\mathsf{Mot}}(\h(X))$ over $\QQ$, whose representation category is equivalent to $\langle \h(X) \rangle_{\mathsf{Mot}}$. The algebraic group $\mathrm{G}_{\mathsf{Mot}}(\h(X))$ is characterized as the subgroup of $ \prod_j \GL(H^{j}(X,\QQ))$ fixing exactly the algebraic classes in $H^{2j}(X^k,\QQ(j))$, for any $j$ and~$k$. There is a canonical inclusion $i_X\colon \MT(H^{\bullet}(X,\QQ))\hookrightarrow \mathrm{G}_{\mathsf{Mot}}(\h(X))$, which is an isomorphism if and only if the Hodge conjecture holds for $X$ and all of its powers. 
Moreover, any object of $\langle \h(X) \rangle_{\mathsf{Mot}}$ admits a weight decomposition induced by the action of the weight torus $\mathbb{G}_{m}\subset \MT(H^{\bullet}(X,\QQ))\subset  \mathrm{G}_{\mathsf{Mot}}(X)$. In particular, we have the K\"unneth decomposition $\h(X)=\bigoplus_i \h^i(X)$ in $\mathsf{Mot}$.

\begin{remark}\label{rmk:transcendentalPart}
	Let $X$ be a smooth and projective variety and assume that the K\"unneth component $\h^2(X)$ of $\h(X)$ is well-defined. Thanks to Lefschetz theorem on divisors, this motive splits as $\h^2(X)=\h^2_{\mathrm{tr}}(X)\oplus \h^2_{\mathrm{alg}}(X)$: the realization of $\h^2_{\mathrm{alg}}(X)$ is the N\'eron-Severi group $\NS(X)_{\QQ}$ while that of $\h^2_{\mathrm{tr}}(X)$ is the transcendental part $H^2_{\mathrm{tr}}(X,\QQ)$ of $H^2(X,\QQ)$, the smallest sub-Hodge structure whose complexification contains $H^{2,0}(X)$.
\end{remark}

\begin{remark} \label{rmk:conservative}
	In light of Theorem \ref{thm:standardConj} we shall say that the standard conjectures hold for a motive $\mathsf{m}\in \mathsf{Mot}$ if the category $\langle \mathsf{m}\rangle_{\mathsf{Mot}}$ is an abelian and semisimple neutral Tannakian category, thus equivalent to the category of representations of a reductive algebraic group $\mathrm{G}_{\mathsf{Mot}}(\mathsf{m})$.
	When this happens, the realization functor $R\colon \langle \mathsf{m}\rangle_{\mathsf{Mot}} \to \mathsf{HS}$ is conservative (see \cite[Corollaire 5.1.3.3]{andre}), i.e., a morphism in  $\langle \mathsf{m}\rangle_{\mathsf{Mot}}$ is an isomorphism if and only if its realization is an isomorphism of $\QQ$-Hodge structures.
\end{remark}
\begin{remark}\label{rmk:evenPart-Mot}
	Let $\mathsf{m}\in \mathsf{Mot}$ be a motive for which the standard conjectures hold. The torus $w\colon \mathbb{G}_{m,\QQ}\hookrightarrow \mathrm{G}_{\mathsf{Mot}}(\mathsf{m})$ inducing the weight decomposition for the objects in $\langle \mathsf{m}\rangle_{\mathsf{Mot}}$ coincides with the weight torus $\mathbb{G}_{m,\QQ}\subset \mathrm{MT}(R(\mathsf{m}))$ from Hodge theory. 
	Assume that $\mathsf{m}$ is a motive of odd weight and let $\langle \mathsf{m}\rangle_{\mathsf{Mot}}^{\mathrm{ev}}$ be the full subcategory of $\langle \mathsf{m}\rangle_{\mathsf{Mot}}$ of objects of even weight, which is again abelian, semisimple and neutral Tannakian. Analougously to Remark~\ref{rmk:evenPart-Hodge}, the motives in $\langle \mathsf{m}\rangle_{\mathsf{Mot}}^{\mathrm{ev}}$ correspond to the representations of $\mathrm{G}_{\mathsf{Mot}}(\mathsf{m})$ on which $w(-1)$ acts trivially. Therefore, the natural morphism of algebraic groups $\mathrm{G}_{\mathsf{Mot}}(\mathsf{m})\to \mathrm{G}_{\mathsf{Mot}}(\langle \mathsf{m}\rangle_{\mathsf{Mot}}^{\mathrm{ev}})$ is an isogeny of degree~$2$. Moreover, by the same argument given in Remark \ref{rmk:evenPart-Hodge}, if $\mathsf{n}\in \langle \mathsf{m}\rangle_{\mathsf{Mot}}$ is a motive of even weight such that the corresponding morphism $\mathrm{G}_{\mathsf{Mot}}(\mathsf{m})\to \mathrm{G}_{\mathsf{Mot}}(\mathsf{n})$ is an isogeny of degree $2$, we then have the equality $\langle \mathsf{n}\rangle_{\mathsf{Mot}}=\langle \mathsf{m}\rangle_{\mathsf{Mot}}^{\mathrm{ev}}$ of subcategories of $\mathsf{Mot}$.
\end{remark}

\subsection{}\label{subsec:standardConj}
	The standard conjectures are known to hold for curves, surfaces and abelian varieties; they hold for varieties $X$ and $Y$ if and only if they hold for $X\times Y$, see~\cite{kleiman}.
	The standard conjectures are not known to hold in general for hyper-K\"ahler varieties. They hold for the varieties of $\mathrm{K}3^{[n]}$, $\mathrm{OG}10$ and $\mathrm{OG}6$-type built via moduli spaces of sheaves on K3 or abelian surfaces by \cite{Bue18, FFZ, floccari22}, and for some hyper-K\"ahler varieties admitting a Lagrangian fibration \cite{Ancona2023}. 
	The standard conjectures have been fully proven for varieties of $\mathrm{K}3^{[n]}$-type by Charles and Markman \cite{CM13}. Recently, Foster \cite{foster} proved that they hold in certain degrees for varieties of $\mathrm{Kum}^n$-type; for later reference we state the following consequence of his results (see also \cite[Remark 3.2]{floccariVaresco}). 
	\begin{theorem}[\cite{foster}] \label{thm:foster}
		Let $K$ be a variety of $\mathrm{Kum}^n$-type. The K\"unneth projector $H^{\bullet}(K,\QQ)\to H^2(K,\QQ)$ is algebraic. Moreover, the standard conjectures hold for the K\"unneth component $\h^2(K)\in \mathsf{Mot}$ of the motive of $K$.
	\end{theorem}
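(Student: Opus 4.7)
The plan is to invoke Foster's algebraicity results as a black box for the first assertion and then reduce the second to a tannakian calculation driven by the K3-type structure of $H^2_{\mathrm{tr}}(K,\QQ)$. First I would apply the main results of \cite{foster}, which produce enough natural algebraic correspondences on $K\times K$ to express the K\"unneth projector $\pi_2 \in H^{4n}(K\times K,\QQ)$ as a $\QQ$-linear combination of algebraic cycle classes. The input is a careful study of the Lefschetz operator $L$ and its inverse $\Lambda$ in low cohomological degrees adapted to $\mathrm{Kum}^n$-geometry, combined with the explicit description of $H^\bullet(K,\QQ)$ as a module over $H^2(K,\QQ)$; a crucial simplification is that $\Lambda$ acting on $H^2(K,\QQ)$ reduces to cup-product with a power of a polarization and is therefore tautologically algebraic.

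Granted the algebraic projector $\pi_2$, the motive $\h^2(K)=(K,\pi_2,0)$ is well-defined in $\mathsf{Mot}$, and by the Lefschetz $(1,1)$ theorem it splits orthogonally as $\h^2(K) = \h^2_{\mathrm{alg}}(K)\oplus \h^2_{\mathrm{tr}}(K)$ as in Remark~\ref{rmk:transcendentalPart}. The algebraic summand is a direct sum of copies of the Lefschetz motive $\mathsf{1}(-1)$, for which the standard conjectures are trivial. To verify the standard conjectures for $\h^2(K)$ in the sense of Theorem~\ref{thm:standardConj} it then suffices, by Jannsen's semisimplicity theorem, to show that homological equivalence coincides with numerical equivalence on $\langle \h^2(K)\rangle_{\mathsf{Mot}}$, whence the category is abelian semisimple Tannakian and Arapura's criterion applies. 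I would extract this coincidence from the K3-type nature of $H^2_{\mathrm{tr}}(K,\QQ)$: the Mumford-Tate group is cut out inside an orthogonal group by the endomorphism algebra of the polarized Hodge structure, and classical invariant theory for orthogonal groups reduces all tensor invariants on $H^2_{\mathrm{tr}}(K,\QQ)$ to polynomials in the Beauville-Bogomolov form and in the endomorphisms of the transcendental lattice. Foster's constructions again supply the algebraic cycles representing both ingredients on $K\times K$, giving hom $=$ num on the full tannakian subcategory.

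The main obstacle is the first step: producing the algebraic K\"unneth projector $\pi_2$, which constitutes the technical heart of Foster's work and requires a genuinely new input beyond formal tannakian arguments. Once this ingredient is available, the passage to the statement about the standard conjectures for $\h^2(K)$ is essentially formal, the only structural input being the orthogonal shape of the Mumford-Tate group of a polarized Hodge structure of K3 type.
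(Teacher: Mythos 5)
The paper does not prove this statement at all: it is quoted as a consequence of Foster's results, with the deduction of the second assertion delegated to \cite[Remark 3.2]{floccariVaresco}. So the only thing to assess is your proposed derivation of ``the standard conjectures hold for $\h^2(K)$'' from the algebraicity of $\pi_2$, and that derivation has a genuine gap. You propose to prove $\mathrm{hom}=\mathrm{num}$ on $\langle\h^2(K)\rangle_{\mathsf{Mot}}$ by combining Zarhin-type invariant theory (all Mumford--Tate invariants in tensor powers of $H^2_{\mathrm{tr}}(K,\QQ)$ are generated by the Beauville--Bogomolov form and the Hodge endomorphisms of the transcendental lattice) with the claim that ``Foster's constructions supply the algebraic cycles representing both ingredients.'' Two things go wrong. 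First, describing the space of \emph{Hodge} classes says nothing a priori about $\mathrm{hom}=\mathrm{num}$, which is a non-degeneracy statement about the intersection pairing restricted to the subspace of \emph{algebraic} classes; to convert one into the other you would need either the algebraicity of all those Hodge classes or a Lefschetz-type positivity argument on the algebraic part, neither of which you set up. Second, and decisively, the algebraicity of the Hodge endomorphisms of $H^2_{\mathrm{tr}}(K,\QQ)$ is not among Foster's results and is not available as an input: it is essentially the Hodge conjecture for the $\h^2$-part of $K\times K$, i.e.\ a consequence of the main theorems of this very paper, not a tool one may assume at this stage. The standard deduction instead shows that $\h^2(K)$ is an \emph{abelian motive} --- via the algebraic embedding $H^2(K,\QQ)\hookrightarrow\End(H^3(K,\QQ))=\End(H^1(J^3(K),\QQ))$ coming from the O'Grady--Markman--Foster circle of results --- so that $\langle\h^2(K)\rangle_{\mathsf{Mot}}$ sits inside the Tannakian category generated by motives of abelian varieties, where semisimplicity and $\mathrm{hom}=\mathrm{num}$ are known.

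A smaller but real inaccuracy: your gloss on Foster's first step asserts that $\Lambda$ on $H^2(K,\QQ)$ ``reduces to cup-product with a power of a polarization and is therefore tautologically algebraic.'' This is false --- the inverse Lefschetz isomorphism $H^{4n-2}(K,\QQ)\to H^2(K,\QQ)$ is not cup product with any class, and producing an algebraic correspondence inducing it is precisely the hard content of Foster's theorem. Since you treat Foster as a black box this does not by itself break your argument, but it signals a misunderstanding of where the difficulty lies.
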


\section{The Kuga-Satake construction}

\subsection{}\label{subsec:Clifford} Let $V$ be a $\QQ$-vector space equipped with a non-degenerate symmetric bilinear form~$q$. The Clifford algebra
$$ C(V) \coloneqq \bigoplus_{j\geq 0} V^{\otimes j}/\langle v\cdot v - q(v,v)\rangle_{v\in V}$$
is naturally $\ZZ/2\ZZ$-graded: $C(V) = C^+(V)\oplus C^{-}(V)$.
The Clifford algebra acts on itself via left and right multiplication.
We fix a vector $v_0\in V$ such that $q(v_0,v_0)\neq 0$. Setting $\mu(v)(a)\coloneqq v\cdot a\cdot v_0$ we obtain an embedding
\begin{equation}\label{eq:mu}
	\mu\colon V\hookrightarrow \mathrm{End}(C^+(v)).
\end{equation}
\begin{remark}\label{rmk:left}
	We denote by $\End_{C^+}(C^+(V))$ the algebra of linear endomorphisms of $C^+(V)$ commuting with the action of $C^+(V)$ on itself by right multiplication. 
	There is a canonical isomorphism $C^+(V)=\mathrm{End}_{C^+}(C^+(V))$, i.e., any $f\in \End_{C^+}(C^+(V))$ is induced by left multiplication by some $\alpha_f\in C^+(V)$; see \cite[\S3]{deligne1971conjecture} or \cite[\S4.2]{Andre1996}.
\end{remark}

\subsection{} Assume now that $(V,q)$ is a polarized $\QQ$-Hodge structure of K3-type, by which we mean that $V$ is an effective Hodge structure of weight $2$ with Hodge numbers~$(1,k,1)$. Following \cite{KUGA1967} and \cite{deligne1971conjecture}, the Hodge structure on $V$ determines an effective polarizable Hodge structure of weight $1$ on $C^+(V)$. This thus defines an abelian variety $\mathrm{KS}(V)$ up to isogeny, such that $H^1(\mathrm{KS}(V),\QQ)\cong C^+(V)$. 
With a suitable Tate twist to make~$V$ of weight $0$, the embedding $\mu\colon V(1)\to \mathrm{End}(H^1(\mathrm{KS}(V),\QQ))$ of \eqref{eq:mu} is a morphism of Hodge structures (for any choice of the vector $v_0\in V$). 

\begin{remark}\label{rmk:DeligneExplains}
	It is explained by Deligne \cite[\S4]{deligne1971conjecture} how the Hodge structure on $C^+(V)$ is induced through the action of the Clifford group $\CSpin(V)$, which is a subgroup of $C^+(V)^*$ acting on $C^+(V)$ via left multiplication. This means that the Mumford-Tate group of $H^1(\mathrm{KS}(V),\QQ)$ is contained in $\CSpin(V)\subset \GL(C^+(V))$. Moreover the morphism $\MT(H^1(\mathrm{KS}(V)),\QQ)\to \MT(V)$ induced by the embedding $\mu$ is an isogeny of degree $2$, being the restriction of the double cover $\CSpin(V)\to \mathrm{GO}(V)$, where $\mathrm{GO}(V)$ is the group of linear automorphisms of $V$ preserving the form up to scalar. 
\end{remark}

\subsection{} Let $S$ be a polarized K3 surface or hyper-K\"ahler variety, and consider the polarized Hodge structure of K3-type $V=H^2(S,\QQ)_{\mathrm{prim}}$, the orthogonal to the given polarization, equipped with the restriction $q$ of the form. We may also take for $q$ the restriction of the Beauville-Bogomolov form, a natural pairing on the second cohomology of a hyper-K\"ahler manifold (\cite{beauville1983varietes}).
We define the Kuga-Satake variety $\mathrm{KS}(S)$ of $S$ as the abelian variety obtained from $(V,q)$ via the Kuga-Satake construction. As above, we have an embedding of Hodge structures
\[
\mu\colon H^2(S,\QQ)_{\mathrm{prim}} (1)\hookrightarrow \mathrm{End}(H^1(\mathrm{KS}(S),\QQ));
\]
identifying $H^1(\mathrm{KS}(S),\QQ)^{\vee}$ with $H^1(\mathrm{KS}(S),\QQ)(1)$ by means of a polarization, we get the embedding of Hodge structures
\begin{equation} \label{eq:mu'}
\mu'\colon H^2(S,\QQ)_{\mathrm{prim}}\hookrightarrow H^1(\mathrm{KS}(S),\QQ)^{\otimes 2} \subset H^2(\mathrm{KS}(S)^2,\QQ).
\end{equation}
\begin{conjecture}\label{conj:KSHC}
	The morphism $\mu'$ is induced by an algebraic cycle on $S\times \mathrm{KS}(S)^2$.
\end{conjecture}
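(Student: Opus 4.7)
Conjecture~\ref{conj:KSHC} is open for arbitrary polarized K3 surfaces or hyper-K\"ahler varieties, so the realistic plan is to verify it under the hypothesis $S = S_K$ for some projective variety $K$ of $\mathrm{Kum}^n$-type, where one has at one's disposal both the geometric correspondence of Theorem~\ref{thm:1.1} and the algebraicity of the Kuga-Satake correspondence for $K$ itself, the latter being established by Markman~\cite{markman2019monodromy} and reformulated by Voisin~\cite{voisinfootnotes}. The overall idea is to transport the Kuga-Satake correspondence from $K$ back to $S_K$ through the isometry of transcendental Hodge structures, after relating the two Kuga-Satake varieties up to isogeny.

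Concretely, I would first invoke Theorem~\ref{thm:1.1} to obtain an algebraic cycle on $S_K\times K$ inducing the Hodge isometry $H^2_{\mathrm{tr}}(S_K,\QQ)\xrightarrow{\sim} H^2_{\mathrm{tr}}(K,\QQ)(2)$. Composing with an algebraic cycle on $K\times \mathrm{KS}(K)^2$ realizing $\mu'_K$, I obtain an algebraic cycle on $S_K\times \mathrm{KS}(K)^2$ embedding $H^2_{\mathrm{tr}}(S_K,\QQ)(1)$ into $H^2(\mathrm{KS}(K)^2,\QQ)$. I would then show that $\mathrm{KS}(S_K)$ is isogenous to a power of $\mathrm{KS}(K)$: by Remark~\ref{rmk:DeligneExplains} the Mumford-Tate group of a Kuga-Satake variety sits inside $\CSpin$ of the underlying K3-type Hodge structure, and since the transcendental parts of $H^2(S_K,\QQ)_{\mathrm{prim}}$ and $H^2(K,\QQ)_{\mathrm{prim}}$ coincide as Hodge structures (rescaling the form by~$2$ being invisible to $\MT$), the two Mumford-Tate groups agree up to the algebraic summands contributed by N\'eron-Severi. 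Using that isogenies between abelian varieties are algebraic, this transports the previous cycle to one on $S_K\times \mathrm{KS}(S_K)^2$. The restriction of $\mu'_{S_K}$ to $\NS(S_K)_{\mathrm{prim},\QQ}$ lands in Hodge classes of $H^2(\mathrm{KS}(S_K)^2,\QQ)$ and is therefore algebraic by the Lefschetz $(1,1)$-theorem, so it suffices to handle the transcendental part.

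The main obstacle I anticipate is the identification step: the primitive lattices on which the Clifford construction is performed for $S_K$ and for $K$ differ in their algebraic summands, so the Kuga-Satake varieties are only isogenous to common powers, and one must pin this isogeny down precisely enough to guarantee that the transported cycle realizes the canonical embedding $\mu'_{S_K}$ and not merely some other embedding of $H^2(S_K,\QQ)_{\mathrm{prim}}(1)$ into $H^2(\mathrm{KS}(S_K)^2,\QQ)$. For this I would pass to the motivic framework of Section~2 and argue in the Tannakian category $\langle \mathsf{m}\rangle_{\mathsf{Mot}}$ generated by the relevant motives, using the conservativity of the realization functor (Remark~\ref{rmk:conservative}) together with the even-weight subcategory trick of Remarks~\ref{rmk:evenPart-Hodge} and~\ref{rmk:evenPart-Mot}: once the transcendental motives of $S_K$, $K$ and the relevant Kuga-Satake factor are identified inside a category in which the standard conjectures hold, the uniqueness of the Hodge-theoretic embedding $\mu'_{S_K}$ up to the action of $\CSpin$ lifts to a uniqueness statement at the motivic level, which forces the cycle produced above to induce $\mu'_{S_K}$ after possibly rescaling.
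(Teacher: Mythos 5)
First, note that the statement you were asked about is stated in the paper as a \emph{conjecture} (the Kuga--Satake Hodge conjecture), and the paper does not claim a proof of it in general; what it proves is the special case $S=S_K$ (Theorem~\ref{thm:associatedK3}.$(i)$, extended to Theorem~\ref{thm:K3surfaces}.$(i)$ via Buskin--Huybrechts). Your decision to restrict to $S=S_K$ is therefore the right one, and your overall strategy coincides with the paper's: produce the algebraic Hodge isometry $H^2_{\mathrm{tr}}(S_K,\QQ)\xrightarrow{\sim}H^2_{\mathrm{tr}}(K,\QQ)(2)$ of Theorem~\ref{thm:half2}, use the algebraicity of the Kuga--Satake correspondence for $K$ due to Markman and Voisin, and transport. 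Two simplifications the paper makes that you miss: by Remark~\ref{rmk:smallerKS} the conjecture is equivalent to its analogue for $\mathrm{KS}'$ built from the \emph{transcendental} lattice, which eliminates your worry about the N\'eron--Severi summands of the primitive cohomology; and the entire transport step is delegated to Varesco's functoriality result (\cite[Proposition 3.1]{varesco2023hodge}, recalled in \S\ref{subsec:functoriality} and Remark~\ref{rmk:functoriality}), which produces an isogeny $\Psi\colon\mathrm{KS}'(S_K)\to\mathrm{KS}'(K)$ making the square with the \emph{canonical} correspondences $\mu'_{S_K}$ and $\mu'_K$ commute, so that $\mu'_{S_K}=(\Psi_*)^{-1}\circ\mu'_K\circ\phi$ is manifestly algebraic.

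The genuine gap in your write-up is precisely the ``identification step'' you flag at the end. Having transported the cycle, you obtain \emph{some} algebraic embedding of $H^2_{\mathrm{tr}}(S_K,\QQ)$ into $H^2(\mathrm{KS}(S_K)^2,\QQ)$, and you need to conclude that the canonical $\mu'_{S_K}$ is algebraic. Your proposed resolution --- lifting a ``uniqueness of $\mu'_{S_K}$ up to the action of $\CSpin$'' to the motivic level via conservativity --- is not an argument as stated: conservativity tells you when a given morphism of motives is an isomorphism, not that a prescribed Hodge-theoretic morphism is motivic, and the even-weight Tannakian bookkeeping of Remarks~\ref{rmk:evenPart-Hodge} and~\ref{rmk:evenPart-Mot} plays no role here. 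The correct and much simpler resolution (which is what underlies Varesco's proposition) is that any two Hodge-theoretic embeddings of the K3-type structure into $\End(H^1(\mathrm{KS}'(S_K),\QQ))$ of the form \eqref{eq:mu} differ by composition with right multiplication by an invertible element of $C^+$; such right multiplications are Hodge endomorphisms of the abelian variety $\mathrm{KS}'(S_K)$ and hence algebraic by the Lefschetz $(1,1)$-theorem --- exactly the mechanism already used in the proof of Proposition~\ref{prop:keyStep}. With that substitution your argument closes up and reproduces the paper's proof of Theorem~\ref{thm:associatedK3}.$(i)$; without it, the final paragraph does not establish that the cycle you construct induces $\mu'_{S_K}$.
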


This is the Kuga-Satake Hodge conjecture, which is of course a special case of the Hodge conjecture. It has been proven for many K3 surfaces of Picard rank at least~$17$ (\cite{morrison1985}), but it is widely open otherwise. In \cite{paranjape} and \cite{ILP} the conjecture is proven for two $4$-dimensional families of K3 surfaces of general Picard rank $16$; countably many more such families of K3 surfaces for which the conjecture holds are found in \cite{floccariKum3}. The paper~\cite{bolognesi} proves Conjecture \ref{conj:KSHC} for a $9$-dimensional family of K3 surfaces related to certain cubic fourfolds.
The Kuga-Satake Hodge conjecture has been proven for all hyper-K\"ahler varieties of generalized Kummer type by Markman \cite{markman2019monodromy} and Voisin \cite{voisinfootnotes}.

\begin{remark}\label{rmk:smallerKS}
	As a variant, we may define $\mathrm{KS}'(S)$ as the Kuga-Satake variety built from the smaller Hodge structure of K3-type $H^2_{\mathrm{tr}}(S,\QQ)$; then the Kuga-Satake variety $\mathrm{KS}(S)$ built from $H^2(S,\QQ)_{\mathrm{prim}}$ is isogenous to a power of $\mathrm{KS}'(S)$ (see \cite[Chapter~4, Example 2.4]{huyK3}). Moreover, Conjecture~\ref{conj:KSHC} is equivalent to the statement that the embedding $\mu'\colon H_{\mathrm{tr}}^2(S,\QQ)\hookrightarrow H^1(\mathrm{KS}'(S),\QQ)^{\otimes 2} \subset H^2(\mathrm{KS}'(S)^2,\QQ) $ analogous to~\eqref{eq:mu'} is induced by an algebraic cycle.
\end{remark}

\subsection{} We are now ready to prove the main result of this section.
\begin{theorem}\label{prop:KSHC}
	Let $S$ be a projective $\mathrm{K}3$ surface. Assume that the Kuga-Satake Hodge conjecture holds for $S$. Then the Hodge conjecture holds for all powers of $S$ if and only if it holds for all powers of its Kuga-Satake variety $\mathrm{KS}(S)$.
\end{theorem}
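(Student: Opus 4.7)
The argument proceeds through the motivic Tannakian formalism of Section~2. First, using Remark~\ref{rmk:smallerKS} I replace $\mathrm{KS}(S)$ by the Kuga--Satake variety $\mathrm{KS}'(S)$ built from the smaller Hodge structure $V:=H^2_{\mathrm{tr}}(S,\QQ)$: the two abelian varieties generate the same Tannakian subcategory of $\mathsf{Mot}$, and the Hodge conjecture on powers of one is equivalent to that on powers of the other. Since $S$ is a surface and $\mathrm{KS}(S)$ an abelian variety, the standard conjectures hold for their motives and all powers thereof by \S\ref{subsec:standardConj}, so the Tannakian fundamental groups $\mathrm{G}_{\mathsf{Mot}}(\h(-))$ are reductive algebraic groups containing the respective Mumford--Tate groups, with equality $\MT=\mathrm{G}_{\mathsf{Mot}}$ being the Hodge conjecture on all powers. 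Because $S$ has no odd-degree cohomology and $\h^{2i}(\mathrm{KS}(S))=\Lambda^i\h^1(\mathrm{KS}(S))$, the two Hodge conjectures reduce respectively to the equalities $\MT(H^2_{\mathrm{tr}}(S))=\mathrm{G}_{\mathsf{Mot}}(\h^2_{\mathrm{tr}}(S))$ and $\MT(H^1(\mathrm{KS}(S)))=\mathrm{G}_{\mathsf{Mot}}(\h^1(\mathrm{KS}(S)))$, the Artin--Tate summands of $\h(S)$ contributing trivially.

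\textbf{Central identification.} The heart of the proof is to establish, assuming only the Kuga--Satake Hodge conjecture, the identity of Tannakian subcategories
\[
\langle \h^2_{\mathrm{tr}}(S)\rangle_{\mathsf{Mot}} \;=\; \langle \h^1(\mathrm{KS}(S))\rangle_{\mathsf{Mot}}^{\mathrm{ev}}.
\]
The inclusion $\subseteq$ is immediate: the algebraic cycle furnished by KSHC promotes the embedding $\mu'$ of~\eqref{eq:mu'} to a morphism of motives, so $\h^2_{\mathrm{tr}}(S)$ is a direct summand of $\h^2(\mathrm{KS}(S)^2)\in\langle \h^1(\mathrm{KS}(S))\rangle_{\mathsf{Mot}}^{\mathrm{ev}}$. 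For the reverse inclusion I invoke Remark~\ref{rmk:evenPart-Mot}: it suffices to check that the induced surjection
\[
\mathrm{G}_{\mathsf{Mot}}(\h^1(\mathrm{KS}(S)))\;\twoheadrightarrow\; \mathrm{G}_{\mathsf{Mot}}(\h^2_{\mathrm{tr}}(S))
\]
is an isogeny of degree~$2$. The kernel certainly contains $w(-1)$, which acts as $-1$ on the odd-weight $\h^1(\mathrm{KS}(S))$ and trivially on the even-weight $\h^2_{\mathrm{tr}}(S)$. To bound the kernel from above, I use KSHC once more: the algebraicity of $\mu\colon V\hookrightarrow\End(C^+(V))$ makes this inclusion $\mathrm{G}_{\mathsf{Mot}}(\h^1(\mathrm{KS}(S)))$-stable, and combined with Remark~\ref{rmk:left} --- which identifies $C^+(V)$ with the commutant of its own right action on itself --- this forces $\mathrm{G}_{\mathsf{Mot}}(\h^1(\mathrm{KS}(S)))$ to be contained in the Clifford group $\CSpin(V)\subseteq \GL(C^+(V))$. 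Since the degree-two isogeny $\CSpin(V)\twoheadrightarrow\mathrm{GO}(V)$ of Remark~\ref{rmk:DeligneExplains} has kernel $\mu_2=\langle w(-1)\rangle$, the required bound follows.

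\textbf{Deduction of the equivalence.} Granting the central identification, the induced equality of Tannakian fundamental groups $\mathrm{G}_{\mathsf{Mot}}(\h^1(\mathrm{KS}(S)))/\langle w(-1)\rangle = \mathrm{G}_{\mathsf{Mot}}(\h^2_{\mathrm{tr}}(S))$ is matched exactly by the Hodge-theoretic identity $\MT(H^1(\mathrm{KS}(S)))/\langle w(-1)\rangle =\MT(H^2_{\mathrm{tr}}(S))$ from Remark~\ref{rmk:DeligneExplains}. Since $\langle w(-1)\rangle$ sits inside each of the four groups in question, the inclusion $\MT\hookrightarrow\mathrm{G}_{\mathsf{Mot}}$ is an equality on the $\mathrm{KS}(S)$-side if and only if it is so on the $\h^2_{\mathrm{tr}}(S)$-side; combined with the reductions of the first paragraph, this yields the desired equivalence.

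\textbf{Main obstacle.} The delicate point is the upper bound $\mathrm{G}_{\mathsf{Mot}}(\h^1(\mathrm{KS}(S)))\subseteq \CSpin(V)$. Morally, once the Kuga--Satake correspondence is algebraic, the whole Clifford-algebraic structure on $C^+(V)$ --- not merely the embedding of $V$ itself --- becomes motivic, constraining the motivic Galois group of $\mathrm{KS}(S)$ to act through the Clifford group. Making this rigorous requires composing motivic correspondences through $\mu$ to produce left multiplications by arbitrary elements of $C^+(V)$ as algebraic morphisms, and then invoking Remark~\ref{rmk:left} to identify the resulting subalgebra as the commutant of the right action. The rest of the argument is formal Tannakian bookkeeping.
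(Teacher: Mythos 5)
Your architecture is essentially that of the paper: use the Kuga--Satake Hodge conjecture to realize $\h^2_{\mathrm{tr}}(S)$ as a summand of $\h^2(\mathrm{KS}(S)^2)$, use the Lefschetz $(1,1)$-theorem to make the right $C^+(V)$-action algebraic so that Remark~\ref{rmk:left} identifies $\mathrm{G}_{\mathsf{Mot}}(\h^1(\mathrm{KS}(S)))$ with a group of left multiplications, pin down the kernel of the surjection onto $\mathrm{G}_{\mathsf{Mot}}(\h^2(S))$ as $\langle w(-1)\rangle$, and then conclude by matching the even-weight subcategories on the motivic and Hodge-theoretic sides via Remarks~\ref{rmk:evenPart-Hodge} and~\ref{rmk:evenPart-Mot}. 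The final bookkeeping and the one easy direction are fine.

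There is, however, a genuine gap, and it is created by your very first reduction. You replace $\mathrm{KS}(S)$ by $\mathrm{KS}'(S)$ and work with $V=H^2_{\mathrm{tr}}(S,\QQ)$, whereas the paper deliberately keeps $V=H^2(S,\QQ)_{\mathrm{prim}}$, of odd dimension $21$. This parity is not cosmetic: it is exactly what makes the kernel bound work. In the paper's version the kernel of $\psi$ is shown to consist of \emph{central} units of $C^+(V)$, and for odd-dimensional $V$ the algebra $C^+(V)$ is central simple, so its center is the scalar $\mathbb{G}_m$, which lies in $\MT(H^1(\mathrm{KS}(S)))$; this is what yields $\ker(\psi)\subseteq i_{\mathrm{KS}}(\ker(\phi))$. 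In your version the same parity enters through the claim that $\ker\bigl(\CSpin(V)\to\mathrm{GO}(V)\bigr)=\mu_2$. But $\dim_\QQ H^2_{\mathrm{tr}}(S,\QQ)=22-\rho(S)$ is even whenever $\rho(S)$ is even, and for even-dimensional $V$ the volume element $\omega$ lies in $C^+(V)$, is central there, and acts on $V$ by $-\mathrm{id}$ under conjugation; consequently elements of the form $\lambda\omega$ with $\lambda^2 N(\omega)=-1$ give extra geometric points in the kernel of the twisted map $g\mapsto N(g)\cdot(v\mapsto gvg^{-1})$, which then has order $4$ rather than $2$. (Equivalently: the center of $C^+(V)$ is a two-dimensional \'etale algebra, not $\QQ$, so the paper's centrality argument also fails to close.) Remark~\ref{rmk:DeligneExplains}, which you cite for the degree-$2$ statement, is only applied in the paper to the $21$-dimensional primitive cohomology; transplanting it to $H^2_{\mathrm{tr}}(S,\QQ)$ is unjustified, and without the degree-$2$ isogeny you cannot invoke Remark~\ref{rmk:evenPart-Mot} to get $\langle\h^2_{\mathrm{tr}}(S)\rangle_{\mathsf{Mot}}=\langle\h^1(\mathrm{KS}'(S))\rangle_{\mathsf{Mot}}^{\mathrm{ev}}$, which is the step that transfers the Hodge conjecture from powers of $S$ to powers of the Kuga--Satake variety. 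The repair is simply to undo your first reduction and run the whole argument with $V=H^2(S,\QQ)_{\mathrm{prim}}$ and $\mathrm{KS}(S)$, as the paper does.
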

We will deduce this result from the following proposition. Recall (\S\ref{subsec:standardConj}) that the standard conjectures hold for $S$ as well as for $\mathrm{KS}(S)$.

\begin{proposition}\label{prop:keyStep}
	Let $S$ be a projective $\mathrm{K}3$ surface. Assume that the Kuga-Satake Hodge conjecture holds for $S$. Then, the motive $\h^2(S)$ belongs to the subcategory $\langle \h^1(\mathrm{KS}(S))\rangle_{\mathsf{Mot}}$ of the category $\mathsf{Mot}$ of homological motives and the induced morphism of algebraic groups $\mathrm{G}_{\mathsf{Mot}}(\h^1(\mathrm{KS}(S)))\to \mathrm{G}_{\mathsf{Mot}}(\h^2(S))$ is an isogeny of degree $2$.
\end{proposition}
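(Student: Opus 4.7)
The plan is to deduce both statements from the algebraic cycle provided by Conjecture~\ref{conj:KSHC}, by combining the Tannakian framework of Section~2 with the explicit Clifford algebra structure underlying the Kuga-Satake construction. For the setup, since $S$ is a surface and $\mathrm{KS}(S)$ an abelian variety, the standard conjectures hold for $S\times \mathrm{KS}(S)$ by~\S\ref{subsec:standardConj}; hence the full subcategory $\mathsf{C}\coloneqq \langle \h(S)\oplus \h(\mathrm{KS}(S))\rangle_{\mathsf{Mot}}$ is abelian, semisimple and neutral Tannakian, with Hodge realization conservative on it (Remark~\ref{rmk:conservative}). To show $\h^2(S)\in \langle\h^1(\mathrm{KS}(S))\rangle_{\mathsf{Mot}}$, I would decompose $\h^2(S) = \h^2_{\mathrm{alg}}(S) \oplus \h^2_{\mathrm{tr}}(S)$ (Remark~\ref{rmk:transcendentalPart}); the algebraic part is a direct sum of copies of $\QQ(-1)$, each a direct summand of $\h^2(\mathrm{KS}(S))=\wedge^2\h^1(\mathrm{KS}(S))$ via the polarization of $\mathrm{KS}(S)$. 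For the transcendental part, Conjecture~\ref{conj:KSHC} combined with Remark~\ref{rmk:smallerKS} provides an algebraic cycle inducing $H^2_{\mathrm{tr}}(S,\QQ)\hookrightarrow H^1(\mathrm{KS}(S))^{\otimes 2}$; by conservativity this lifts to an injective motivic morphism $\h^2_{\mathrm{tr}}(S)\hookrightarrow \h^1(\mathrm{KS}(S))^{\otimes 2}$, and semisimplicity realizes $\h^2_{\mathrm{tr}}(S)$ as a subobject.

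Next I would show that the induced surjective morphism $\phi\colon \mathrm{G}_{\mathsf{Mot}}(\h^1(\mathrm{KS}(S)))\twoheadrightarrow \mathrm{G}_{\mathsf{Mot}}(\h^2(S))$ has its source contained in $\CSpin(V)$, where $V=H^2_{\mathrm{prim}}(S,\QQ)$. Two ingredients enter: (a) the Clifford algebra $C^+(V)$ acts on $H^1(\mathrm{KS}(S))=C^+(V)$ by right multiplication, and this action is induced by genuine endomorphisms of the abelian variety $\mathrm{KS}(S)$; hence right multiplications are algebraic correspondences commuting with $\mathrm{G}_{\mathsf{Mot}}(\h^1(\mathrm{KS}(S)))$, which by Remark~\ref{rmk:left} forces $\mathrm{G}_{\mathsf{Mot}}(\h^1(\mathrm{KS}(S)))\subseteq C^+(V)^*$ (acting by left multiplication). (b) Conjecture~\ref{conj:KSHC} exhibits $V\subseteq \End(H^1(\mathrm{KS}(S)))$ as an algebraic subspace, preserved by conjugation by $\mathrm{G}_{\mathsf{Mot}}(\h^1(\mathrm{KS}(S)))$; together with (a) this places the group inside $\CSpin(V)$ by the defining property of the Clifford group.

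Finally, to compute $\ker\phi$: since $\mathrm{G}_{\mathsf{Mot}}(\h^1(\mathrm{KS}(S)))$ fixes every algebraic class on $S$, it acts trivially on $\NS(S)_{\QQ}\subseteq H^2(S)$, so $g\in \ker\phi$ if and only if the image of $g$ under $\CSpin(V)\to \mathrm{GO}(V)$ acts trivially on $H^2_{\mathrm{tr}}(S)\subseteq V$. Combined with the automatic trivial action on the algebraic part $V\cap \NS(S)_{\QQ}$, the image of $g$ in $\mathrm{GO}(V)$ must be trivial on all of $V$, whence $g\in \ker(\CSpin(V)\to \mathrm{GO}(V))=\{\pm 1\}$. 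Since $w(-1)\in \ker\phi$ (as $\h^2(S)$ has even weight), we conclude $\ker\phi=\{w(\pm 1)\}$ and $\phi$ is an isogeny of degree $2$. The main obstacle is the containment $\mathrm{G}_{\mathsf{Mot}}(\h^1(\mathrm{KS}(S)))\subseteq \CSpin(V)$: it is a non-trivial strengthening of the a priori bound inside $\GL(H^1(\mathrm{KS}(S)))$ that relies crucially on both the Clifford endomorphism algebra of $\mathrm{KS}(S)$ and the algebraicity of the Kuga-Satake correspondence.
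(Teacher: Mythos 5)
Your proposal is correct and follows essentially the same route as the paper: both arguments rest on (a) right Clifford multiplication being algebraic by the Lefschetz $(1,1)$-theorem, which identifies $\mathrm{G}_{\mathsf{Mot}}(\h^1(\mathrm{KS}(S)))$ with a subgroup of $C^+(V)^*$ acting by left multiplication, and (b) the algebraicity of $\mu$ forcing kernel elements to centralize $V$ inside $C^+(V)$, which is central simple since $\dim V=21$ is odd, hence to be scalars in the weight torus. The paper establishes (b) only for kernel elements, via a direct computation with the auxiliary vector $v_0$, rather than first placing the whole group inside $\CSpin(V)$ as you do; this is a difference in packaging only (though note that the action on $\NS(S)_{\QQ}$ is by the similitude character rather than literally trivial --- it is trivial for elements of the kernel, which is all you need).
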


\begin{proof}
	By Theorem \ref{thm:standardConj}, the subcategory $\langle \h(\mathrm{KS}(S) \times S)\rangle_{\mathsf{Mot}}$ of $\mathsf{Mot}$ is an abelian and semisimple neutral Tannakian category.
	Hence, if Conjecture~\ref{conj:KSHC} holds for $S$, then the motive $\h^2(S)$ is a direct summand of $\h^2(\mathrm{KS}(S)^2)$, and so it belongs to $ \langle \h^1(\mathrm{KS}(S))\rangle_{\mathsf{Mot}}$. We obtain the commutative diagram
	\[
	\begin{tikzcd}
		\mathrm{G}_{\mathsf{Mot}}(\h^1(\mathrm{KS}(S))) \arrow[two heads]{r}{\psi \ } & \mathrm{G}_{\mathsf{Mot}}(\h^2(S))\\
		\MT(H^1(\mathrm{KS}(S),\QQ)) \arrow[hook]{u}{i_{\mathrm{KS}}} \arrow[two heads]{r}{{\phi}} & \MT(H^2(S,\QQ)) \arrow[hook]{u}{i_S}
	\end{tikzcd}
	\]
	We know from Remark \ref{rmk:DeligneExplains} that the morphism $\phi$ is an isogeny of degree $2$; we will show that $\ker(\psi) = \iota_{\mathrm{KS}}(\ker(\phi))$, which implies that $\psi $ is also an isogeny of degree $2$.
	
	To ease notation, we let $A\coloneqq \mathrm{KS}(S)$ and $V\coloneqq H^2(S,\QQ)_{\mathrm{prim}}$; we denote by $q$ the polarization form on $V$. 
	By Remark \ref{rmk:DeligneExplains}, the action of $C^+(V)$ on $H^1(A,\QQ)$ via right multiplication commutes with $\MT(H^1(A,\QQ))$, i.e., $C^+(V)$ acts via Hodge endomorphisms. By Lefschetz (1,1)-theorem, any such endomorphism is algebraic. Therefore, $\mathrm{G}_{\mathsf{Mot}}(\h^1(A))\subset \GL(H^1(A,\QQ))$ commutes with the $C^+(V)$-action given by right multiplication. By Remark \ref{rmk:left}, we can thus identify $\mathrm{G}_{\mathsf{Mot}}(\h^1(A))$ with a subgroup of $C^+(V)^*$, the group of units in the Clifford algebra, acting on $C^+(V)$ via left multiplication.
	
	Let $g\in \ker\bigl(\psi\colon \mathrm{G}_{\mathsf{Mot}}(\h^1(A))\to \mathrm{G}_{\mathsf{Mot}}(\h^2(S))\bigr)$; then $g\colon H^1(A,\QQ)\to H^1(A,\QQ)$ is left multiplication by some $\alpha_g\in C^+(V)^*$. We claim that $\alpha_g$ is a central element of $C^+(V)$. 
	
	By assumption, the embedding $\mu\colon V(1)\hookrightarrow \mathrm{End}(H^1(A,\QQ))$ is induced by an algebraic cycle. As $g\in \ker(\psi)$, it follows that $g$ commutes with the image of $\mu$. Recall (\S\ref{subsec:Clifford}) that the definition of $\mu$ depends on a vector $v_0\in V$ with $q(v_0,v_0)\neq 0$, such that  $\mu(v)(\beta)\coloneqq v\cdot \beta \cdot v_0$. In the Clifford algebra we have $v_0^{2k}=q(v_0,v_0)^k\cdot e$ where $e $ is the identity element, and any $\beta\in C^+(V)$ is a linear combination of products $v_1\cdots v_{2k}$ of an even number of vectors in~$V$. As $g$ commutes with $\mu(v)$ for any $v\in V$, we find:
	\begin{align*}
		\alpha_g (v_1v_2\cdots v_{2k}) &
		 = \frac{1}{q(v_0,v_0)^{k}} \cdot \alpha_g \cdot (v_1v_2\cdots v_{2k}) \cdot v_0^{2k} \\
		& = \frac{1}{q(v_0,v_0)^{k}} \cdot \bigl(g\circ \mu(v_1) \circ \mu(v_2) \circ \ldots \circ \mu(v_{2k})\bigr)(e) \\
		& = \frac{1}{q(v_0,v_0)^{k}} \cdot \bigl(\mu(v_1) \circ \mu(v_2) \circ \ldots \circ \mu(v_{2k}) \circ g\bigr)(e) = (v_1v_2\cdots v_{2k}) \alpha_g.
	\end{align*}	
	Therefore, $\ker(\psi)$ is contained in the center of $C^+(V)$. But $V=H^2(S,\QQ)_{\mathrm{prim}}$ has odd dimension $21$, and in such case $C^+(V)$ is a central simple algebra (\cite[p.~216]{deligne1971conjecture}).
	The units in the center of $C^{+}(V)$ thus form a torus $\mathbb{G}_{m,\QQ}$ acting on $C^+(V)$ via scalar multiplication; this torus induces the weight filtration on objects of $\langle H^1(A,\QQ)\rangle_{\mathsf{HS}}$ and so it is contained in $\MT(H^1(A,\QQ))$. Hence, $\ker(\psi)$ is contained in $i_{\mathrm{KS}}(\ker(\phi))$. 
\end{proof}

\begin{proof}[Proof of Theorem \ref{prop:KSHC}]
	One direction is immediate: as Conjecture \ref{conj:KSHC} holds for $S$ by assumption, we have $\h^2(S)\in \langle \h^1(\mathrm{KS}(S))\rangle_{\mathsf{Mot}}$, and the Hodge conjecture for all powers of $\mathrm{KS}(S)$ clearly implies the Hodge conjecture for all powers of $S$. 
	
	Let us prove the converse implication. The Hodge conjecture for all powers of $\mathrm{KS}(S)$ predicts that the realization functor $R\colon \mathsf{Mot}\to \mathsf{HS}$ induces a surjection
	\begin{equation}\label{eq:intermediate} \Hom_{\mathsf{Mot}}(\mathsf{Q}(-j), \h^{2j}(\mathrm{KS}(S)^k)) \to \Hom_{\mathsf{HS}}(\mathbb{Q}(-j), H^{2j}(\mathrm{KS}(S)^k,\QQ)),
	\end{equation} 
	for any $k$ and $j$. 
	Only motives and Hodge structures of even weight are involved in~\eqref{eq:intermediate}, and so these $\mathrm{Hom}$-spaces are actually taken in the categories $\langle \h^1(\mathrm{KS}(S))\rangle_{\mathsf{Mot}}^{\mathrm{ev}}$ and $\langle H^1(\mathrm{KS}(S),\QQ)\rangle_{\mathsf{HS}}^{\mathrm{ev}}$, respectively. 
	Proposition \ref{prop:keyStep} and Remark \ref{rmk:evenPart-Mot} yield the equality $\langle \h^1(\mathrm{KS}(S))\rangle_{\mathsf{Mot}}^{\mathrm{ev}} = \langle \h^2(S)\rangle_{\mathsf{Mot}}$ of subcategories of $\mathsf{Mot}$; in particular, $\h^{2j}(\mathrm{KS}(S)^k)$ belongs to $\langle \h^2(S)\rangle_{\mathsf{Mot}}$ for any $j$ and $k$. Therefore, the map in~\eqref{eq:intermediate} is the map 
	\[
	\Hom_{\langle \h^2(S)\rangle_{\mathsf{Mot}}}(\mathsf{Q}(-j), \h^{2j}(\mathrm{KS}(S)^k)) \to \Hom_{\langle H^2(S,\QQ)\rangle_{\mathsf{HS}}}(\mathbb{Q}(-j), H^{2j}(\mathrm{KS}(S)^k,\QQ))
	\]
	given by the realization functor $R$. If the Hodge conjecture holds for all powers of $S$, then $R\colon \langle \h^2(S)\rangle_{\mathsf{Mot}}\to \langle H^2(S,\QQ)\rangle_{\mathsf{HS}}$ is full, and the maps in \eqref{eq:intermediate} are surjective. 
\end{proof}

\subsection{} \label{subsec:functoriality}
For later use, we recall the following functoriality property of the Kuga-Satake construction, following Varesco \cite{varesco2023hodge}.
Let $X$ and $Y$ be hyper-K\"ahler varieties, not necessarily deformation equivalent and possibly of different dimensions. Assume that $\phi\colon H^2_{\mathrm{tr}}(X,\QQ)\to H^2_{\mathrm{tr}}(Y,\QQ)$ is a rational Hodge similitude, i.e., $\phi$ is an isomorphism of $\QQ$-Hodge structures which multiplies the Beauville-Bogomolov form on the left-hand side by some non-zero $k\in \QQ$. As in Remark \ref{rmk:smallerKS}, let $\mathrm{KS}'(X)$ and $\mathrm{KS}'(Y)$ be the Kuga-Satake varieties constructed from $H^2_{\mathrm{tr}}(X,\QQ)$ and $H^2_{\mathrm{tr}}(Y,\QQ)$, respectively. It is then proven in \cite[Proposition 3.1]{varesco2023hodge} that there exists an isogeny $\Psi\colon \mathrm{KS}'(X)\to \mathrm{KS}'(Y)$ of abelian varieties such that the following diagram 
\[
\begin{tikzcd}
	H^2_{\mathrm{tr}}(X,\QQ) \arrow[hook]{d}{\mu'_X} \arrow{rr}{\phi} && H^2_{\mathrm{tr}}(Y,\QQ) \arrow[hook]{d}{\mu_Y'}\\
	H^1(\mathrm{KS}'(X),\QQ)^{\otimes 2} \arrow{rr}{\Psi_*} && H^1(\mathrm{KS}'(Y),\QQ)^{\otimes 2}
\end{tikzcd}
\]
commutes, where $\mu'_X$ and $\mu_Y'$ are the respective Kuga-Satake correspondences and the bottom arrow is the isomorphism induced by the isogeny $\Psi$.
\begin{remark}\label{rmk:functoriality}
	With notation as above, assume that $\phi\colon H^2_{\mathrm{tr}}(X,\QQ)\to H^2_{\mathrm{tr}}(Y,\QQ)$ is induced by an algebraic cycle on $X\times Y$. Then, Conjecture \ref{conj:KSHC} for $Y$ implies Conjecture~\ref{conj:KSHC} for $X$. Indeed, the above diagram yields $\mu_X'=(\Psi_*)^{-1} \circ \mu'_Y\circ  \phi$; note that $(\Psi_*)^{-1}\colon H^1(\mathrm{KS}'(Y),\QQ)^{\otimes 2} \to H^1(\mathrm{KS}'(X),\QQ)^{\otimes 2}$ is induced by an algebraic cycle, so that the Kuga-Satake correspondence $\mu'_X$ is algebraic if $\mu'_Y$ is so.
\end{remark}

\section{Some $\mathrm{K}3^{[m]}$-type submanifolds of generalized Kummer varieties}

\subsection{} Let $A$ be an abelian surface. We denote by $K^n(A)$ the generalized Kummer variety on $A$ of dimension $2n$ introduced in \cite{beauville1983varietes}. It is a hyper-K\"ahler variety constructed as follows: consider the Hilbert scheme $A^{[n+1]}$ of $0$-dimensional subschemes of $A$ of lenght $n+1$, which is a crepant resolution $\nu\colon A^{[n+1]}\to A^{(n+1)}$ of the symmetric power. Denoting by $\Sigma\colon A^{(n+1)}\to A$ the summation map $\Sigma(a_1,\dots,a_{n+1})=\sum_{i=1}^{n+1}a_i$, the variety $K^n(A)$ is defined as the fibre of $\Sigma\circ \nu$ over $0\in A$. 
We let $A^{(n+1)}_0\subset A^{(n+1)}$ be the fibre of $\Sigma$ over $0$; the Hilbert-Chow morphism $\nu$ restricts to a crepant resolution $\nu\colon K^{n}(A)\to A^{(n+1)}_0$. For $n=1$, the construction yields nothing but the Kummer K3 surface $\Km(A)$ associated to $A$, that is, the minimal resolution of the quotient $A/\pm 1$.

We will study certain natural subvarieties of $K^n(A)$, $n\geq 2$. Consider the morphisms:
\begin{equation}
	\begin{split}
f & \colon A^{m}\to A_0^{(2m)\phantom{+1}}, \  (a_1,\dots, a_m) \mapsto (a_1,-a_1,\dots, a_m,-a_m); \\
f' & \colon A^{m}\to A_0^{(2m+1)}, \  (a_1,\dots, a_m) \mapsto (a_1,-a_1,\dots, a_m,-a_m, 0).
\end{split}
\end{equation}

\begin{definition} \phantomsection \label{def:W}
	\begin{enumerate}[label=(\roman*)]
		\item If $n$ is odd, $n=2m-1$, we let $\iota \colon W\hookrightarrow K^{n}(A)$ be the strict transform of the image of $f\colon A^{m}\to A_0^{(2m)}$ under $\nu\colon K^n(A)\to A_0^{(2m)}$.
		\item If $n$ is even, $n=2m$, we let $\iota \colon W\hookrightarrow K^{n}(A)$ be the strict transform of the image of $f'\colon A^{m}\to A_0^{(2m+1)}$ under $\nu\colon K^n(A)\to A_0^{(2m+1)}$.
	\end{enumerate}
\end{definition}

Notice that the automorphism $-1$ of $A$ naturally acts on $K^{n}(A)$ for all $n$, as the natural action of $-1$ on $A^{[n+1]}$ stabilizes $K^{n}(A)$. It is known that $-1$ acts trivially on $H^2(K^n(A),\ZZ)$, see \cite{boissiere2011higher}. 
The map $f$ (resp. $f'$) factors through an embedding of $(A/\pm 1)^{(m)}$ into $A_0^{(2m)}$ (resp. into $A_0^{(2m+1)}$); a natural crepant resolution of $(A/\pm 1)^{(m)}$ is given by $\Km(A)^{[m]}$, the Hilbert scheme of points on the Kummer K3 surface $\Km(A)$.

\begin{lemma} \label{lem:KMO}
	For any $n\geq 2$, the subvariety $W\subset K^{n}(A)$ is the unique component of maximal dimension of the fixed locus of $-1$ acting on $K^{n}(A)$. Moreover, if $n=2m$ (resp. $n=2m-1$), then $f'$ (resp. $f$) induces an embedding $\iota\colon \Km(A)^{[m]}\hookrightarrow K^{2m}(A)$ (resp. $\iota\colon \Km(A)^{[m]} \hookrightarrow K^{2m-1}(A)$) with image $W$.
\end{lemma}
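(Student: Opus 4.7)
The plan is to address the two assertions of the lemma in sequence.

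For the first assertion, I would describe the fixed locus of $(-1)$ on $K^n(A)$ set-theoretically. A closed point of $K^n(A)$ is a length-$(n+1)$ subscheme $Z\subset A$ with $\sum Z=0$; it is fixed by $(-1)$ precisely when $-Z=Z$ as subschemes. Such a $Z$ decomposes by support into contributions at $2$-torsion points $t\in A[2]$ and at orbit pairs $\{a,-a\}$ with $a\notin A[2]$. Stratifying $\fixe(-1)$ by the numerical type of this decomposition (with $k$ $2$-torsion supports of multiplicities $k_i$ and $l$ orbit pairs of multiplicities $l_j$, subject to $\sum k_i+2\sum l_j=n+1$ and $\sum_i t_i=0$), a local dimension count on the Hilbert scheme — using that $(-1)$ acts as $-\mathrm{id}$ on the tangent space $T_tA$ at each $2$-torsion point — shows that the maximal dimension $2m$ is attained uniquely by the reduced stratum with $k=0,l=m$ when $n=2m-1$ and with $k=1$, $t_1=0$, $l=m$ when $n=2m$. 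By Definition~\ref{def:W} this unique maximal-dimensional component is precisely $W$.

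For the second assertion I would construct the embedding $\iota$ directly. Let $\tilde A\to A$ denote the blow-up at $A[2]$. The involution $(-1)$ lifts to $\tilde A$ and the quotient is $\Km(A)$, so we obtain a double cover $\pi\colon \tilde A\to \Km(A)$ whose branch locus consists of the sixteen exceptional $(-2)$-curves. On the Zariski-open subset $U\subset \Km(A)^{[m]}$ parametrizing reduced subschemes $\{x_1,\dots,x_m\}$ disjoint from these curves, each $x_i$ lifts uniquely to a pair $\{a_i,-a_i\}\subset A\setminus A[2]$; sending $\{x_1,\dots,x_m\}$ to the length-$2m$ subscheme $\{a_1,-a_1,\dots,a_m,-a_m\}$ (and adjoining the origin in the even case $n=2m$) defines an injective morphism $\iota|_U\colon U\to K^n(A)$ whose image lies in the generic part of $W$.

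The main step, which I expect to be the principal obstacle, is extending $\iota|_U$ to a regular morphism on all of $\Km(A)^{[m]}$. I would use that $K^n(A)$ is projective together with the valuative criterion, reducing to a local analysis along the exceptional divisor of $\Km(A)\to A/\pm 1$: the key computation is that the blow-up of $A/\pm 1$ at a node is naturally identified with the space of $(-1)$-invariant length-$2$ subschemes of $A$ concentrated at the corresponding $2$-torsion point, matching exactly the local structure of $W$ inside $K^n(A)$. Once $\iota\colon \Km(A)^{[m]}\to K^n(A)$ is defined everywhere with image in $W$, I conclude as follows: $W$ is smooth, being a connected component of the fixed locus of an automorphism of the smooth variety $K^n(A)$, and $\iota\colon \Km(A)^{[m]}\to W$ is a proper birational morphism between smooth irreducible varieties of the same dimension $2m$. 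By Zariski's main theorem, $\iota$ is then an isomorphism onto $W$, establishing the claim.
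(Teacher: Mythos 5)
Your strategy is a self-contained re-derivation of what the paper simply imports from Kamenova--Mongardi--Oblomkov: the paper's proof consists of citing \cite[Theorem 1.3]{KMO} for the structure of the fixed locus and \cite[Appendix B]{KMO} for the fact that the birational map $\Km(A)^{[m]}\dashrightarrow W$ extends to an isomorphism. Re-proving this is legitimate, but the two computations you defer are exactly the content of that citation, and as written they are incomplete. For the first assertion, your stratification is the right one, but the dimension count requires knowing $\dim\bigl(\mathrm{Hilb}^{k}_0(\CC^2)^{-\mathrm{id}}\bigr)$ for every $k$ (for instance that it equals $1$ for $k=2$ and $k=3$, and in general grows strictly slower than $k/2$ in the relevant sense), together with the observation that two distinct reduced $2$-torsion points cannot sum to zero; without these inputs, ``a local dimension count shows'' is an assertion, not an argument. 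For the second assertion, the local model you describe (blow-up of the node identified with the invariant length-$2$ punctual subschemes) covers the divisors $R_{\tau}$ in the odd case and $R_{\tau}$ with $\tau\neq 0$ in the even case, but when $n=2m$ the relevant model along $R_0$ is $\mathrm{Bl}_{\overline{0}}(\CC^2/\pm 1)\hookrightarrow (\CC^2)^{[3]}$, $y\mapsto \{y,-y,0\}$, which is governed by the length-$3$ punctual Hilbert scheme (the cone over a twisted cubic); this is the genuinely delicate verification and you do not address it, nor the Hilbert--Chow divisor. Note also that the valuative criterion on a smooth source only yields a morphism outside a closed subset of codimension $\geq 2$, which is not yet a morphism everywhere.

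The final step is incorrect as stated: a proper birational morphism between smooth irreducible varieties of the same dimension need not be an isomorphism (any blow-down is a counterexample), so Zariski's main theorem does not apply without first establishing quasi-finiteness. The gap is repairable: both $\Km(A)^{[m]}$ and $W$ have trivial canonical bundle ($W$ inherits a symplectic form by restriction, being a component of the fixed locus of a symplectic involution), and a proper birational morphism onto a smooth variety with $K_X=f^*K_Y$ can have no exceptional divisor, since any such divisor would have positive discrepancy; hence $f$ is an isomorphism. Alternatively one can check injectivity of the extended map directly from the local models. With these three points repaired --- the punctual dimension count, the length-$3$ analysis at $R_0$, and the corrected birational-morphism argument --- your proof would go through, and would essentially reconstruct the content of \cite{KMO} that the paper takes as input.
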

\begin{proof}
	The lemma is proven by Kamenova-Mongardi-Oblomkov \cite{KMO}. They show in \cite[Theorem 1.3]{KMO} that the fixed locus of $-1$ on $K^n(A)$ is the union of components which are varieties of $\mathrm{K}3^{[k]}$-type or points, and that there exists a unique component of maximal dimension $2m$ for $n=2m-1$ or $n=2m$. 
	As our $W\subset K^n(A)$ is evidently fixed by $-1$, it must be the component of maximal dimension of the fixed locus. 
	The local study done in \cite[Appendix B]{KMO} shows that the birational map $\Km(A)^{[m]}\dashrightarrow W$ induced by $f'$ if $n=2m$ or $f$ if $n=2m-1$ extends to an isomorphism.
\end{proof} 

The main result of this section is the following. 

\begin{proposition}\label{prop:computation}
	Let $\iota\colon W \hookrightarrow K^{n}(A)$ denote the embedding of Definition \ref{def:W}. Then the pull-back along $\iota$ induces a primitive embedding
	\[ \iota^*\colon H^2(K^{n}(A),\ZZ) (2) \hookrightarrow H^2(W,\ZZ) \]
	of lattices and Hodge structures, where on the left-hand side the form is multiplied by a factor $2$.
\end{proposition}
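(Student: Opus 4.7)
I would work in Beauville's orthogonal decompositions
\[
H^2(K^n(A),\ZZ) = H^2(A,\ZZ) \oplus \ZZ\delta_K, \qquad H^2(W,\ZZ) = H^2(\Km(A),\ZZ) \oplus \ZZ\delta_W,
\]
with $q_K(\delta_K) = -2(n+1)$ and $q_W(\delta_W) = -2(m-1)$, and exploit the classical primitive Kummer embedding $\kappa\colon H^2(A,\ZZ)\hookrightarrow H^2(\Km(A),\ZZ)$ which doubles the intersection form. The plan is to compute $\iota^*$ separately on each summand, then verify isometry and primitivity.

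For the $H^2(A,\ZZ)$-part I would use the commutative diagram
\[
\begin{tikzcd}
W \arrow{r}{\iota} \arrow{d}{r} & K^n(A) \arrow{d}{\nu}\\
\bar A^{(m)} \arrow[hook]{r}{j} & A^{(n+1)}_0
\end{tikzcd}
\]
with $\bar A = A/\{\pm 1\}$ and $r,\nu$ the two birational resolutions, together with the finite covering $A^m\to\bar A^{(m)}$ factoring $f$ (resp.\ $f'$). For $\alpha\in H^2(A,\ZZ)$ with Beauville class $\alpha_K = \nu^*\tilde\alpha$, computing the pullback of the symmetric class $\tilde\alpha$ through $f$ yields $2\sum_k p_k^*\alpha$ on $A^m$, the factor $2$ coming from $(-1)^*=\mathrm{id}$ on $H^2(A)$ and both $a_k,-a_k$ appearing in the image of $f$. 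Pulling back further to $W$ via the symmetric product of the Kummer resolution $\Km(A)\to\bar A$ (which rationally satisfies $\pi_{\Km}^*\bar\alpha = \kappa(\alpha)/2$) exactly absorbs this factor $2$, producing $\iota^*\alpha_K = \kappa(\alpha)\in H^2(W,\ZZ)$. The image on this summand is therefore the primitive sublattice $\kappa(H^2(A,\ZZ))\subset H^2(\Km(A),\ZZ)\subset H^2(W,\ZZ)$.

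For the $\ZZ\delta_K$-part I would analyse the intersection of $W$ with the Hilbert--Chow exceptional divisor $E_K = 2\delta_K$ of $\nu$. Its components correspond to the two types of non-reducedness occurring in the image of $W$ in $A^{(n+1)}_0$: coincident pairs (giving the H-C exceptional $E_W = 2\delta_W\subset W$) and $2$-torsion degenerations (giving the divisors $D_{C_i}\subset W$ meeting the $(-2)$-curve $C_i\subset\Km(A)$ over a $2$-torsion point of $A$, where the pair $(a_i,-a_i)$ collapses). Since $[D_{C_i}] = [C_i]$ inside $H^2(\Km(A),\ZZ)\subset H^2(W,\ZZ)$ via Beauville's embedding, a local multiplicity computation at generic points yields an explicit formula $\iota^*\delta_K = \delta_W + \eta$, with $\eta$ an integral combination of the $(-2)$-classes $[C_i]$; the curve $C_0$ over the origin requires separate treatment when $n$ is even, due to the extra $0$ in $f'$.

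With both pieces determined, the isometry $q_W\circ\iota^* = 2q_K$ follows by a direct Gram matrix check (using that $\kappa$ doubles the form, that the two summands in the image are orthogonal by construction, and that $q_W(\iota^*\delta_K)$ equals $-4(n+1)$ after adjusting the coefficients in $\eta$); primitivity of the whole image follows from primitivity of $\kappa(H^2(A,\ZZ))$ and primitivity of $\iota^*\delta_K$, the latter verifiable by comparing to the expected discriminant $2^7\cdot(-2(n+1))$ of $H^2(K^n(A),\ZZ)(2)$. The hard part will be the local multiplicity computation fixing $\eta$: each generic point of $W\cap E_K$ requires careful analysis mixing the Kummer involution, the two Hilbert--Chow resolutions, and the blow-up structure at $2$-torsion of $A$, with $C_0$ needing distinct treatment for even $n$.
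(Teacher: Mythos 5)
Your overall strategy --- decompose both lattices, compute $\iota^*$ on the $H^2(A,\ZZ)$-summand via the symmetric-product diagram and on the exceptional class via local multiplicities of $\iota^*E$, then check the Gram matrix and saturation --- is essentially the strategy of the paper. But the concrete claims you make at the two decisive points are wrong, and this is where the actual content of the proof lies. First, your ansatz $\iota^*\delta_K=\delta_W+\eta$ with $\eta$ an integral combination of the sixteen $(-2)$-classes is incorrect: the correct formulas are $\iota^*\xi=2\delta+\tfrac{1}{2}\sum_{\tau\in A_2}[R_\tau]$ for $n=2m-1$ and $\iota^*\xi=2\delta+[R_0]+\tfrac{1}{2}\sum_{\tau\in A_2}[R_\tau]$ for $n=2m\geq 4$. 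The coefficient $2$ on $\delta$ arises because the Hilbert--Chow divisor $E\subset K^n(A)$ has \emph{two} branches through a generic point of $D\subset W$ (the pairs $(x,x)$ and $(-x,-x)$ collapse simultaneously), each restricting to $D$; and the correction term $\tfrac{1}{2}\sum_{\tau}[R_\tau]$ is integral only in the saturated Kummer lattice, not as a combination of the individual $[R_\tau]$. Second, the ``separate treatment of $C_0$ for even $n$'' that you defer is the hardest step: one must show that $\iota^*E$ has multiplicity $3$ along $R_0$, which the paper does by identifying the punctual Hilbert scheme of length $3$ (the Brian\c{c}on variety $\PP(1,1,3)$) with the cone over a twisted cubic in $\PP^4$ and computing a degree there. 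Without these numbers the Gram-matrix check cannot be carried out; with your ansatz it would force $\sum_i c_i^2=3m+1$, which does not match what the geometry actually produces.

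There are two further soft spots. Your claim that primitivity of the whole image follows from primitivity of each summand is a non sequitur: an overlattice can mix the two summands, and the paper closes this by an explicit check that any integral class $\alpha u+\beta\,\iota^*\xi$ with $u\in H^2(A,\ZZ)(2)$ primitive must have $\beta\in\ZZ$ (pair with a single $[R_\tau]$). And your direct computation of $\iota^*$ on the $H^2(A,\ZZ)$-summand only determines the answer away from the exceptional divisors, so a priori it could differ from $\kappa(\alpha)$ by classes supported on $D$ and the $R_\tau$; the paper instead deforms to a very general $A$, where irreducibility of the Hodge structure on $H^2(A,\ZZ)$ forces $\iota^*$ to carry this summand into $H^2(A,\ZZ)(2)$, and obtains the global isometry constant from the generalized Fujiki relation applied to the canonical Hodge class $[\iota(W)]$ --- a cleaner route that requires only the value of $\iota^*\xi$.
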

The cases $n=2$ and $n=3$ may be deduced from \cite{hassettTschinkel} and \cite{floccariHCKum3}, respectively.

\subsection{} \label{subsec:notationCohomology}
Let us fix some notation for the cohomology groups of the varieties we shall consider.
By \cite{beauville1983varietes}, for $n\geq 2$ there is a canonical primitive embedding of $H^2(A,\ZZ)$ into the second cohomology of $K^n(A)$. We have 
\begin{equation}\label{eq:cohomologyK^n}
H^2(K^n(A),\ZZ) = H^2(A,\ZZ)\oplus \ZZ\cdot \xi ,
\end{equation}
where $\xi$ is half the class of the divisor $E\subset K^n(A)$ parametrizing non-reduced subschemes, which is the exceptional divisor of the Hilbert-Chow resolution. The class~$\xi$ has square $-2(n+1)$ with respect to the Beauville-Bogomolov form; as a lattice, $H^2(K^n(A),\ZZ)$ is thus isometric to $\mathrm{U}^{\oplus 3} \oplus \langle -2(n+1)\rangle$. 

The Kummer surface $\Km(A)$ associated to $A$ contains $16$ exceptional curves $C_{\tau}$, parametrized by the $2$-torsion points $\tau\in A_2$. The pushforward along the rational map $\pi\colon A\dashrightarrow \Km(A)$ induces a primitive embedding of $H^2(A,\ZZ)(2)$ into $H^2(\Km(A),\ZZ)$ whose orthogonal is the Kummer lattice (\cite{Nikulin}), which is the saturation of the sublattice generated by the $16$ pairwise orthogonal $(-2)$-classes $[C_{\tau}]$. Hence, $H^2(\Km(A),\ZZ)$ contains with finite index the sublattice $H^2(A,\ZZ)(2) \oplus \langle [C_{\tau}]\rangle_{\tau\in A_2}$. 

For the cohomology of the Hilbert scheme $\Km(A)^{[m]}$, $m\geq 2$, we have by \cite{beauville1983varietes} a canonical primitive embedding of $H^2(\Km(A),\ZZ)$ into $H^2(\Km(A)^{[m]},\ZZ)$, and 
\begin{equation}\label{eq:cohomologyKm^m}
H^2(\Km(A)^{[m]},\ZZ) = H^2(\Km(A),\ZZ)\oplus \ZZ\cdot \delta,
\end{equation}
where $\delta$ is half the class of the divisor $D\subset \Km(A)^{[m]}$ of non-reduced subschemes; the Beauville-Bogomolov square of $\delta$ is $-2(m-1)$. Under the embedding of $H^2(\Km(A),\ZZ)$ into $H^2(\Km(A)^{[m]},\ZZ)$, the class $[C_{\tau}]$ of the exceptional curve over the node given by $\tau\in A_2$ corresponds to the class $[R_{\tau}]$ of the divisor $R_{\tau}$ of subschemes whose support intersects $C_{\tau}$.
Therefore, $H^2(\Km(A)^{[m]},\ZZ) $ contains with finite index a sublattice $H^2(A,\ZZ)(2) \oplus \langle [R_{\tau}]\rangle_{\tau\in A_2} \oplus \langle \delta\rangle$ and the saturation of $\langle [R_{\tau}]\rangle_{\tau\in A_2}$ is the Kummer lattice.

\subsection{} 
The next lemma is the first step towards Proposition \ref{prop:computation}. 
\begin{lemma} \label{lem:restrictionE}
	Consider the embedding $\iota\colon W\hookrightarrow K^n(A)$, for $n\geq 2$.
	\begin{enumerate}[label=(\roman*)]
	\item For $n=2$, the pull-back along $\iota\colon \Km(A) \hookrightarrow K^{2}(A)$ yields $$\iota^*(\xi) = [C_0] + \frac{1}{2} \sum_{\tau\in A_2} [C_{\tau}] \text{ in } H^2(\Km(A),\ZZ).$$ 
	\item For $n=2m-1$, the pull-back along $\iota\colon \Km(A)^{[m]}\hookrightarrow K^{2m-1}(A)$ yields
	$$\iota^*(\xi) = 2\delta + \frac{1}{2} \sum_{\tau\in A_2} [R_{\tau}] \text{ in } H^2(\Km(A)^{[m]} ,\ZZ).$$
	\item For $n=2m \geq 4$, the pull-back along $\iota\colon \Km(A)^{[m]}\hookrightarrow K^{2m}(A)$ yields
	$$\iota^*(\xi) = 2\delta + [R_0] + \frac{1}{2} \sum_{\tau\in A_2} [R_{\tau}] \text{ in } H^2(\Km(A)^{[m]},\ZZ).$$ 
 \end{enumerate}
\end{lemma}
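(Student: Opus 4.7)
The plan is to compute the divisor $\iota^*(E)$ on $W$, where $E=2\xi$ is the exceptional divisor of the Hilbert-Chow morphism $\nu\colon K^n(A)\to A_0^{(n+1)}$, by identifying its irreducible components together with their multiplicities; dividing by~$2$ then gives $\iota^*(\xi)$. By Lemma~\ref{lem:KMO} we have $W\cong \Km(A)^{[m]}$, and the composition $W\hookrightarrow K^n(A)\xrightarrow{\nu}A_0^{(n+1)}$ factors as
\[
\Km(A)^{[m]}\to \Km(A)^{(m)}\to (A/\pm 1)^{(m)}\hookrightarrow A_0^{(n+1)}.
\]
A configuration in the image of~$W$ is therefore non-reduced precisely when one of the following occurs: $(a)$ some $a_i\in A_2$, which collides $a_i$ with $-a_i$; $(b)$ $a_i=\pm a_j$ for some $i\neq j$, which produces two separate collisions, at~$a_i$ and at~$-a_i$; or $(c)$ only in case~$(iii)$, $a_i=0$, which combines with the extra~$0$ contributed by~$f'$ to produce a triple coincidence at~$0$.

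The irreducible components of $\iota^*(E)$ correspond accordingly: from $(a)$ the divisor $R_\tau$ for each $\tau\in A_2$ (or $C_\tau$ in case~$(i)$, where $m=1$); from $(b)$ the exceptional divisor $D=2\delta$ of the Hilbert-Chow morphism $\Km(A)^{[m]}\to \Km(A)^{(m)}$; and from $(c)$ an additional contribution along $R_0$ (respectively $C_0$). The task reduces to computing the multiplicity of $\iota^*(E)$ at a generic point of each such component. For $R_\tau$ with $\tau\neq 0$: the configuration has a single two-point collision at~$\tau$, and using local analytic coordinates on~$A$ near~$\tau$ (in which $-1$ acts as $-\mathrm{id}$) together with the minimal resolution description of $\Km(A)$ near~$C_\tau$, a direct inspection shows that $\iota^*(E)$ has multiplicity~$1$ along~$R_\tau$. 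For the divisor~$D$ in cases $(ii)$ and~$(iii)$: near a generic point of~$D$, the configuration has two simultaneous collisions, at~$a$ and at~$-a$ for some $a\notin A_2$; locally, $A^{[n+1]}$ factors as $A^{[2]}\times A^{[2]}\times(\text{simple factors})$, the exceptional divisor~$E$ decomposes into two analytic branches $E_{+a}$ and~$E_{-a}$, and the embedding of $\Km(A)^{[m]}$ relates the two factors by the $-1$-involution, so that both branches pull back to~$D$; this gives multiplicity $1+1=2$ for $\iota^*(E)$ along~$D$. Finally, for $R_0$ (or~$C_0$) in cases $(i)$ and~$(iii)$: the triple collision yields multiplicity~$3$ by a direct local analysis of the Hilbert-Chow resolution at the triple-coincidence point~$\{0,0,0,\dots\}$.

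The main obstacle is the local multiplicity at~$R_0$ (respectively $C_0$) in cases $(i)$ and $(iii)$: here three of the slots of the configuration collapse simultaneously at~$0$, the exceptional divisor~$E$ contains the entire two-dimensional fiber of~$\nu$ over the triple-coincidence point, and $W$ meets this fiber along the one-dimensional~$C_0$; careful bookkeeping of the local structure of~$\nu$ and of its exceptional locus around such a point is required, in contrast with the simpler double-collision situation at the other strata. As a consistency check, each of the proposed formulas for $\iota^*(\xi)$ can be verified to have Beauville-Bogomolov square equal to $-4(n+1)$, which matches twice the BB-square $-2(n+1)$ of~$\xi$ in $H^2(K^n(A),\ZZ)$, in agreement with the isometric embedding $\iota^*\colon H^2(K^n(A),\ZZ)(2)\hookrightarrow H^2(W,\ZZ)$ stated in Proposition~\ref{prop:computation}.
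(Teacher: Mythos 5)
Your overall strategy is the same as the paper's: decompose $\iota^*(E)$ (with $E=2\xi$) into the components $R_\tau$, the Hilbert--Chow divisor $D$, and an extra contribution along $R_0$ (resp.\ $C_0$) in the even case, and then compute local multiplicities $1$, $2$, $3$. Your identification of the components is correct, and your treatments of the multiplicity $1$ along $R_\tau$ for $\tau\neq 0$ (local double collision) and of the multiplicity $2$ along $D$ (the two analytic branches of $E$ over $a$ and $-a$, exchanged by the involution, each pulling back to $D$) match the paper's Cases 1 and 3.

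There is, however, a genuine gap at exactly the point you yourself flag as the main obstacle: the multiplicity $3$ along $R_0$ (resp.\ $C_0$). You assert that ``a direct local analysis'' of the triple collision gives $3$, but you never perform it, and this is the one place where the answer does not follow from inspecting codimension-one behaviour transverse to a smooth stratum: as you note, $W$ meets the two-dimensional fibre of $\nu$ over the triple point only in the curve $C$, so the multiplicity must be extracted by an intersection-theoretic computation on that fibre. The paper's argument (Example~\ref{example:basic}(b)) needs concrete geometric input: the Brian\c{c}on variety $B=\nu^{-1}((0,0,0))\subset(\CC^2)^{[3]}$ is $\PP(1,1,3)$, embedded in $\PP^4$ as the cone over the twisted cubic, with $j^*\mathcal{O}(F)\cong\mathcal{O}_B(-2H)$ (from \cite{Briancon77} and \cite[\S4]{hassettTschinkel}); the fixed curve of $-1$ in $B$ is the twisted cubic at the base of the cone, of degree $3$, whence $\deg\bigl(\mathcal{O}(F)_{|_{\phi(C)}}\bigr)=-6$; since $C$ is a $(-2)$-curve, the projection formula gives $-2k=-6$, i.e.\ $k=3$. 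Some version of this computation is indispensable. Your proposed consistency check cannot substitute for it within the logic of the paper: the statement that $\iota^*$ multiplies the Beauville--Bogomolov form by $2$ (Proposition~\ref{prop:computation}, via Lemma~\ref{lem:restriction}) is itself deduced from the present lemma, so using it to pin down the coefficient at $R_0$ would be circular.
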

Notice that $\tfrac{1}{2}\sum_{\tau} [C_{\tau}]$ is indeed an integral class, see \cite[Chapter 14, \S3.3]{huyK3}.
The proof of the lemma will be reduced to some local computations.

\begin{example}\label{example:basic}
	Consider the involution $-1\colon \CC^2\to \CC^2$. The quotient $\CC^2/\pm 1$ has a nodal singularity at the image $\overline{0}$ of the origin, which is resolved by the blow-up $\mathrm{Bl}_{\overline{0}}(\CC^2/\pm 1)\to \CC^2$. We denote by $C \subset \mathrm{Bl}_{\overline{0}}(\CC^2/\pm 1) $ the exceptional curve. 
	\begin{enumerate}[label=(\alph*)]
	\item The graph map $\hat{\phi}\colon \CC^2\to (\CC^2)^2$ given by $y\mapsto (y,-y)$ yields a Cartesian diagram 
	\begin{equation}
	\begin{tikzcd}
	\mathrm{Bl}_{\overline{0}}(\CC^2/\pm 1) \arrow[hook]{r}{\phi} \arrow{d}{\nu} & \CC^{[2]} \arrow{d}{\nu} \\
	(\CC^2/\pm 1) \arrow[hook]{r}{\bar{\phi}} & (\CC^2)^{(2)};
	\end{tikzcd}
	\end{equation}
	by \cite[Lemma 3.13]{KMO}, the image of $\phi$ is the fixed locus of $-1$ acting on $\CC^{[2]}$. Let $F\subset \CC^{[2]}$ be the divisor of non-reduced subschemes. Then, as divisors, $\phi^*(F)=C$; indeed, the image of $\phi$ and $F$ intersect transversely in the curve $\phi(C)$.
	\item Consider now the map $\hat{\phi}\colon \CC^2\to (\CC^2)^3$ given by $y\mapsto (y,-y,0)$. It induces the Cartesian commutative diagram 
	\begin{equation}\label{eq:diagrammino}
	\begin{tikzcd}
		\mathrm{Bl}_{\overline{0}}(\CC^2/\pm 1) \arrow[hook]{r}{\phi} \arrow{d}{\nu} & \CC^{[3]} \arrow{d}{\nu} \\
		(\CC^2/\pm 1) \arrow[hook]{r}{\bar{\phi}} & (\CC^2)^{(3)};
	\end{tikzcd}
	\end{equation}
	by \cite[Lemma 3.13]{KMO}, the fixed locus of $-1$ acting on $\CC^{[3]}$ consists of the image of $\phi$ and an isolated fixed point. Let $F\subset \CC^{[3]}$ be the divisor of non-reduced subschemes. Then we claim that, as divisors, we have $\phi^*(F)=3C$.
	
	To see this, let $B\subset (\CC^2)^{[3]}$ be the Brian\c{c}on variety at $0$, that is, the subvariety $B\coloneqq \nu^{-1}((0,0,0))$ that parametrizes subschemes fully supported at $0\in \CC^2$, and let $j\colon B\hookrightarrow  \CC^{[3]}$ denote the embedding. 
	The set-theoretic intersection of~$F$ and the image of $\phi$ is $\phi(C)$, which is contained in~$B$. 
	In our case, by \cite[IV.2]{Briancon77}, the Brian\c{c}on variety $B$ is isomorphic to the weighted projective plane $\PP(1,1,3)$; following \cite[\S4]{hassettTschinkel}, there exists a closed  embedding $\PP(1,1,3)\hookrightarrow \PP^4$ realizing $B$ as the cone over the twisted cubic curve and the restriction $j^*(\mathcal{O}_{(\CC^2)^{[3]}}(F))$ equals~$\mathcal{O}_B(-2H)$, where we denote by $\mathcal{O}_{B}(H)$ the line bundle which induces the embedding $B\hookrightarrow \PP^4$.
	Moreover, $-1\colon \CC^{[3]}\to \CC^{[3]}$ acts on $B$ with fixed locus the vertex of the cone and the twisted cubic curve at the base, which is thus identified with $\phi(C)$. 
	The embedding $\phi_{|_C}\colon C\hookrightarrow  (\CC^{2})^{[3]}$ factors as the composition of embeddings $C\xrightarrow{\phi'} B \xrightarrow{j}  (\CC^{2})^{[3]}$. Therefore,
	\begin{equation}\label{eq:degree}
		\deg\bigl(\mathcal{O}_{(\CC^2)^{[3]}}(F)_{|_{\phi(C)}}\bigr) = \deg \bigl(\mathcal{O}_B(-2H)_{|_{\phi'(C)}}\bigr) =-6,
	\end{equation}
	as $H$ is a hyperplane divisor in $\mathbb{P}^4$ and the twisted cubic curve has degree $3$. If $k$ denotes the multiplicity of $\phi^*(F)$ at $C$, we have $-2k= \deg\bigl(\mathcal{O}_{(\CC^2)^{[3]}}(F)_{|_{\phi(C)}}\bigr) = -6$, and we conclude that $k=3$. 
	
	To justify the last claim, consider a smooth and projective surface $Y$ with an involution $\theta$ with isolated fixed points. Each fixed point gives a node on the quotient $Y/\theta$; the blow-up of these nodes is a smooth and projective surface $X$. We choose a fixed point $p\in Y$, and let $C_p \subset X$ be the exceptional curve over the corresponding node. We then have the Cartesian diagram of projective varieties 
	\[
	\begin{tikzcd}
		X \arrow[hook]{r}{\phi} \arrow{d}{\nu} & Y^{[3]} \arrow{d}{\nu} \\
		Y/\theta \arrow[hook]{r}{\bar{\phi}} & Y^{(3)},
	\end{tikzcd}
	\]
	induced by the map $Y\to Y^{3}$ sending $y$ to $(y, \theta(y), p)$.	
	We can find an analytic neighborhood $U\subset Y$ of $p$ stable under $\theta$ which is identified with a neighborhood~$V$ of the origin in $\CC^2$ in such a way that the action of $\theta$ on $U$ corresponds to the action of~$-1$ on~$V$. Thus, denoting by $F$ the exceptional divisor of $Y^{[3]}\to Y$ and by $k$ the multiplicity of $\phi^*(F)$ at $C_p$, we have to show that $-2k = \deg\bigl(\mathcal{O}_{Y^{[3]}}(F)_{|_{\phi(C_p)}}\bigr)$. But $\phi^*(F)$ is supported over the exceptional curves $C_q$ of $X\to Y/\theta$, which are pairwise orthogonal $(-2)$-curves. Therefore, by the projection formula, we have 
	\[
	-2k=\int_X [C_p]\cdot [\phi^*(F)] =  \int_{Y^{[3]}}[\phi(C_p)]\cdot [F] = \deg\bigl(O_{Y^{[3]}}(F)_{|_{\phi(C_p)}}\bigr).
	\]	
\end{enumerate}
\end{example}
\begin{proof}[Proof of Lemma \ref{lem:restrictionE}]
	The exceptional divisor $E\subset K^n(A)$ is the restriction to $K^n(A)\subset A^{[n+1]}$ of the exceptional divisor of the Hilbert-Chow resolution $A^{[n+1]}\to A^{(n+1)}$, which we still denote by $E$.
	We consider the Cartesian diagram 
	\begin{equation}\label{eq:diagramma}
	\begin{tikzcd}
	\Km(A)^{[m]} \arrow[hook]{r}{\iota} \arrow{d} & A^{[n+1]} \arrow{d}{\nu}\\
	(A/\pm 1)^{(m)} \arrow[hook]{r}{\bar{\iota}} & A^{(n+1)},	
	\end{tikzcd}
	\end{equation}
	where $n=2m$ or $n=2m-1$: in the first case, $\bar{\iota}(\overline{a_1}, \dots, \overline{a_m}) = (a_1,-a_1, \dots, a_m,-a_m, 0)$, while in the second $\bar{\iota}(\overline{a_1}, \dots, \overline{a_m}) = (a_1,-a_1, \dots, a_m,-a_m)$. It is immediate to check that, if $m\geq 2$, the restriction $\iota^*(E)$ is supported over the union of the Hilbert-Chow divisor $D\subset \Km(A)^{[m]}$ and the $16$ divisors $R_{\tau}\subset \Km(A)^{[m]}$, for $\tau\in A_2$, while if $m=1$ then $\iota^*(E)$ is supported over the $16$ exceptional curves $C_{\tau}\subset \Km(A)$, $\tau\in A_2$, which we will also denote by $R_{\tau}$ to ease notation.
	We now have the following cases to consider. 
	
	\textbf{Case 1}: let $\tau\in A_2$ be a $2$-torsion point; if $n$ is even, assume further that $\tau\neq 0$. We claim that then $\iota^*(E)$ has multiplicity $1$ along the divisor $R_{\tau}$. 
	
	Indeed, the image in $(A/\pm 1)^{(m)}$ of a general subscheme $\zeta \in R_{\tau}$ is a point of the form $(\overline{\tau}, \overline{a_2},\dots, \overline{a_m})$ with the $\overline{a_i}$ pairwise distinct smooth points of $A/\pm 1$, while $\iota(\zeta)$ is a subscheme in $A^{[n+1]}$ supported over the point $(\tau, \tau, a_2, -a_2, \dots, a_m,-a_m)$ of $A^{(n+1)}$ if $n=2m-1$ is odd or over $(\tau, \tau, a_2, -a_2, \dots, a_m,-a_m, 0)$ if $n=2m$ is even. 
	Let $\mathcal{U}$ be an analytic neighborhood of $\iota(\zeta)$ in $A^{[n+1]}$ stable under $-1$, and let $\mathcal{V}$ be the neighborhood $\iota^{-1}(\mathcal{U})$ of $\zeta$ in $\Km(A)^{[m]}$. Assume first that $n= 2m-1$, for some $m\geq 2$. Choosing $\mathcal{U}$ small enough,	diagram \eqref{eq:diagramma} restricts to
	\[
	\begin{tikzcd}
		\mathcal{V}\cong \bigl(\mathrm{Bl}_{\overline{\tau}}(U_{\tau}/\pm 1) \times \prod_{j=2}^m U_j\bigr) \arrow[hook]{r}{\iota} \arrow{d} & \mathcal{U} \cong \bigl(U_{\tau}^{[2]} \times \prod_{j=2}^m(U_{j} \times U_{j}^{-})\bigr) \arrow{d}{\nu} \\
		(U_{\tau}/\pm 1) \times \prod_{j=2}^m U_j \arrow[hook]{r}{\bar{\iota}} & U_{\tau}^{(2)} \times \prod_{j=2}^m(U_{j} \times U_{j}^{-})
	\end{tikzcd}\] 
	  where: $U_{\tau}\subset A$ is a neighborhood of $\tau$ stable under $-1$; $U_{j}\subset A$ is a neighborhood of $a_j$ and $U^-_{j}\coloneqq -1(U_j)$, for $j=2,\dots,m$; the morphism $\nu$ is the product of the Hilbert-Chow resolution $U_{\tau}^{[2]}\to U_{\tau}^{(2)}$ with the identity of each factor $(U_j\times U^{-}_j)$; the morphism $\bar{\iota}$ is given by $\bar{\iota}(\overline{\alpha}, \alpha_2,\dots, \alpha_m)= ((\alpha, -\alpha), \alpha_2,-\alpha_2, \dots, \alpha_m,-\alpha_m)$.
	  If instead $n=2m$ is even, choosing $\mathcal{U}$ small enough diagram \eqref{eq:diagramma} restricts to
	  \[
	  \begin{tikzcd}
	  	\mathcal{V}\cong \bigl(\mathrm{Bl}_{\overline{\tau}}(U_{\tau}/\pm 1) \times \prod_{j=2}^m U_j\bigr) \arrow[hook]{r}{\iota} \arrow{d} & \mathcal{U} \cong \bigl(U_{\tau}^{[2]} \times \prod_{j=2}^m(U_{j} \times U_{j}^{-}) \times U_0\bigr) \arrow{d}{\nu} \\
	  (U_{\tau}/\pm 1) \times \prod_{j=2}^m U_j \arrow[hook]{r}{\bar{\iota}} & U_{\tau}^{(2)} \times \prod_{j=2}^m(U_{j} \times U_{j}^{-}) \times U_0
	  \end{tikzcd}
	  \]
	  where: $U_{\tau}\subset A$ is a neighborhood of $\tau$ stable under $-1$; $U_{j}\subset A$ is a neighborhood of $a_j$ and $U^-_{j}\coloneqq -1(U_j)$, for $j=2,\dots,m$; $U_0\subset A$ is a neighborhood of $0$; the morphism $\nu$ is the product of the Hilbert-Chow resolution $U_{\tau}^{[2]}\to U_{\tau}^{(2)}$ with the identity of each factor $(U_j\times U^{-}_j)$ and of $U_0$; we have $\bar{\iota}(\overline{\alpha}, \alpha_2,\dots, \alpha_m)= ((\alpha, -\alpha), \alpha_2,-\alpha_2, \dots, \alpha_m,-\alpha_m, 0)$.
	  
	  In both cases, we have $E_{|_{\mathcal{U}}}= \mathrm{pr}_{1}^*(F)$, where $\mathrm{pr}_1\colon \mathcal{U} \to U_{\tau}^{[2]}$ is the projection onto the first factor and $F\subset U_{\tau}^{[2]}$ is the divisor of non-reduced subschemes.
	  Moreover, the divisor ${R_{\tau}}_{|_{\mathcal{V}}}$ equals $\widetilde{\mathrm{pr}}_1^*(C)$, where $\widetilde{\mathrm{pr}}_1\colon \mathcal{V}\to \mathrm{Bl}_{\overline{\tau}}(U_{\tau} /\pm 1)$ is the projection and $C\subset \mathrm{Bl}_{\overline{\tau}}(U_{\tau} /\pm 1)$ is the exceptional curve.
		We thus have the commutative diagram 
	  \[
	  \begin{tikzcd}
	  	\mathcal{V} \arrow[hook]{r}{\iota} \arrow{d}{\widetilde{\mathrm{pr}}_1} & \mathcal{U} \arrow{d}{\mathrm{pr}_1} \\
	  	\mathrm{Bl}_{\overline{\tau}}(U_{\tau} /\pm 1) \arrow[hook]{r}{\phi} & U_{\tau}^{[2]}
  	  \end{tikzcd}
    \] 
    where $\phi$ coincides with the restriction of the map $\mathrm{Bl}_{\overline{0}}(\CC^2/\pm 1) \to (\CC^2)^{[2]}$ of Example~\ref{example:basic}.(a) in a neighborhood of the exceptional curve $C\subset \mathrm{Bl}_{\overline{0}}(\CC^2\pm 1)$. We deduce that, as divisors, we have $$\iota^*(E_{|_{\mathcal{U}}})= ( \iota\circ \mathrm{pr}_1)^*(F) = (\widetilde{\mathrm{pr}}_1^* (\phi^*(F)))= \widetilde{\mathrm{pr}}_1^*(C)= {R_{\tau}}_{|_{\mathcal{V}}}. $$
    Hence, $\iota^*(E)$ has multiplicity $1$ along the divisor $R_{\tau}\subset \Km(A)^{[m]}$.
    
    \textbf{Case 2:} let $n=2m$ be even and consider the divisor $R_{0}$ of $\Km(A)^{[m]}$. We claim that $\iota^*(E)$ has multiplicity $3$ along the divisor $R_{0}$.
    
    Indeed, the image in $(A/\pm 1)^{(m)}$ of a general subscheme $\zeta\in R_{0}$ is a point of the form $(\overline{0}, \overline{a_2},\dots, \overline{a_m})$ with the $\overline{a_i}$ pairwise distinct smooth points of $A/\pm 1$, while $\iota(\zeta)$ is a subscheme in $A^{[n+1]}$ supported over the point $(0, 0, a_2, -a_2, \dots, a_m,-a_m, 0)$ of $A^{(n+1)}$.
    Let $\mathcal{U}$ be an analytic neighborhood of $\iota(\zeta)$ in $A^{[n+1]}$ such that $-1(\mathcal{U})=\mathcal{U}$, and let $\mathcal{V}$ be the analytic neighborhood $\iota^{-1}(\mathcal{U})$ of $\zeta$ in $\Km(A)^{[m]}$.     
    Choosing $\mathcal{U}$ small enough, diagram \eqref{eq:diagramma} restricts to 
    \[
    \begin{tikzcd}
    	\mathcal{V}\cong \bigl(\mathrm{Bl}_{\overline{0}}(U_{0}/\pm 1) \times \prod_{j=2}^m U_j\bigr) \arrow[hook]{r}{\iota} \arrow{d} & \mathcal{U} \cong \bigl(U_{0}^{[3]} \times \prod_{j=2}^m(U_{j} \times U_{j}^{-}) \bigr) \arrow{d}{\nu} \\
    	(U_{0}/\pm 1) \times \prod_{j=2}^m U_j \arrow[hook]{r}{\bar{\iota}} & U_{0}^{(3)} \times \prod_{j=2}^m(U_{j} \times U_{j}^{-})
    \end{tikzcd}
    \]
    where: $U_0\subset A$ is a neighborhood of $0$ stable under $-1$; $U_j\subset A$ is a neighborhood of $a_j$ and $U^{-}_j\coloneqq -1(U_j)$, for $j=2,\dots, m$; the morphism $\nu$ is the product of the Hilbert-Chow resolution $U_0^{[3]}\to U_0^{(3)}$ with the identity of each factor $(U_j\times U^{-}_j)$; we have $\bar{\iota}(\overline{\alpha}, \alpha_2,\dots, \alpha_m)= ((\alpha, -\alpha, 0), \alpha_2,-\alpha_2, \dots, \alpha_m,-\alpha_m)$.
   	Then $E_{|_{\mathcal{U}}}= \mathrm{pr}_{1}^*(F)$, where $\mathrm{pr}_1\colon \mathcal{U} \to U_0^{[3]}$ is the projection and $F\subset U_0^{[3]}$ is the divisor of non-reduced subschemes.
    Moreover, ${R_{0}}_{|_{\mathcal{V}}}=\widetilde{\mathrm{pr}}_1^*(C)$, where $\widetilde{\mathrm{pr}}_1\colon \mathcal{V}\to \mathrm{Bl}_{\overline{0}}(U_{0} /\pm 1)$ is the projection and $C\subset \mathrm{Bl}_{\overline{0}}(U_{0} /\pm 1)$ is the exceptional curve.
    We have the commutative diagram 
    \[
    \begin{tikzcd}
    	\mathcal{V} \arrow[hook]{r}{\iota} \arrow{d}{\widetilde{\mathrm{pr}}_1} & \mathcal{U} \arrow{d}{\mathrm{pr}_1} \\
    	\mathrm{Bl}_{\overline{0}}(U_{0} /\pm 1) \arrow[hook]{r}{\phi} & U_{0}^{[3]}
    \end{tikzcd}
    \] 
    where $\phi$ coincides with the restriction of the map $\mathrm{Bl}_{\overline{0}} (\CC^2/\pm 1) \to (\CC^2)^{[3]}$ of Example~\ref{example:basic}.(b) in a neighborhood of $C\subset \mathrm{Bl}_{\overline{0}}(\CC^2/\pm 1)$, and we deduce that $$\iota^*(E_{|_{\mathcal{U}}})= ( \iota\circ \mathrm{pr}_1)^*(F) = (\widetilde{\mathrm{pr}}_1^* (\phi^*(F)))= \widetilde{\mathrm{pr}}_1^*(3C)= 3 {R_{0}}_{|_{\mathcal{V}}} $$
    as divisors.
    Hence, $\iota^*(E)$ has multiplicity $3$ along the divisor $R_{0}\subset \Km(A)^{[m]}$.
    
    \textbf{Case 3:} assume now that $m\geq 2$ and consider the Hilbert-Chow divisor $D\subset \Km(A)^{[m]}$. We claim that $\iota^*(E)$ has multiplicity $2$ along $D$. 
    
    In fact, the image in $(A/\pm 1)^{(m)}$ of a general subscheme $\zeta\in D$ is a point $(\overline{x}, \overline{x}, \overline{a_3},\dots, \overline{a}_m)$, for $m-1$ distinct smooth points $\overline{x}, \overline{a_3},\dots, \overline{a_m}$ of $(A/\pm 1)$. The subscheme $\iota(\zeta)$ in $A^{[n+1]}$ is supported over $(x, x, -x,-x, a_3,-a_3, \dots, a_m,-a_m)$ if $n=2m-1$ is odd, or over $(x,x,-x,-x, a_3,-a_3, \dots, a_m,-a_m, 0) $ if $n=2m$ is even.
    Let $\mathcal{U}\subset A^{[n+1]}$ be an analytic neighborhood of $\iota(\zeta)$, and let $\mathcal{V}\subset \Km(A)^{[m]}$ be its preimage $\iota^{-1}(\mathcal{U})$. 
    If $n=2m-1$ is odd, shrinking $\mathcal{U}$ if necessary, there exist neighborhoods $U_x $ of $x$ and $U_j$ of $a_j$ in $A$ for $j=3,\dots, m$, such that diagram \eqref{eq:diagramma} restricts to 
    \[
    \begin{tikzcd}
    	\mathcal{V}\cong \bigl(U_x^{[2]} \times \prod_{j=3}^m U_j\bigr) \arrow[hook]{r}{\iota} \arrow{d} & \mathcal{U} \cong \bigl(U_{x}^{[2]} \times (U_{x}^-)^{[2]} \times \prod_{j=3}^m(U_{j} \times U_{j}^{-})\bigr) \arrow{d}{\nu} \\
    	U_{x}^{(2)} \times \prod_{j=3}^m U_j \arrow[hook]{r}{\bar{\iota}} & U_{x}^{(2)}\times (U_x^-)^{(2)} \times \prod_{j=3}^m(U_{j} \times U_{j}^{-})
    \end{tikzcd}
	\] 
	where: $U^-_x$ and $U_{j}^-$ denote the image via $-1$ of $U_x$ and $U_j$ respectively, the map $\nu$ is the product of the Hilbert-Chow morphisms $U_x^{[2]}\to U_x^{(2)}$ and $(U_x^-)^{[2]}\to (U^-_x)^{(2)}$ with the identity of each factor $(U_j\times U^{-}_j)$ for $j=3,\dots,m $, and $\bar{\iota}$ is given by $\bar{\iota}((x_1,x_2), \alpha_3,\dots, \alpha_m)= ((x_1, x_2), (-x_1, -x_2), \alpha_3,-\alpha_3, \dots, \alpha_m,-\alpha_m)$.
    If instead $n=2m$ is even, there exist neighborhoods $U_{x}\subset A$ of $x$ and $U_j\subset A$ of $a_j$ for $j=3,\dots, m$, and a neighborhood $U_0\subset A$ of $0$ such that diagram \eqref{eq:diagramma} restricts to 
    \[
    \begin{tikzcd}
    	\mathcal{V}\cong \bigl(U_x^{[2]} \times \prod_{j=3}^m U_j\bigr) \arrow[hook]{r}{\iota} \arrow{d} & \mathcal{U} \cong \bigl(U_{x}^{[2]} \times (U_x^{-})^{[2]} \times \prod_{j=3}^m(U_{j} \times U_{j}^{-}) \times U_0 \bigr) \arrow{d}{\nu} \\
    	U_x^{(2)} \times \prod_{j=3}^m U_j \arrow[hook]{r}{\bar{\iota}} & U_{x}^{(2)}\times (U_x^{-})^{(2)} \times \prod_{j=3}^m(U_{j} \times U_{j}^{-}) \times U_0
    \end{tikzcd}
    \]
    where: $U^-_x$ and $U_{j}^-$ denote the image via $-1$ of $U_x$ and $U_j$ respectively, the map $\nu$ is the product of the Hilbert-Chow morphisms $U_x^{[2]}\to U_x^{(2)}$ and $(U_x^-)^{[2]}\to (U^-_x)^{(2)}$ with the identity of $U_0$ and of each factor $(U_j\times U^{-}_j)$ for $j=3,\dots,m $, and we have $\bar{\iota}((x_1,x_2), \alpha_3,\dots, \alpha_m)= ((x_1, x_2), (-x_1, -x_2), \alpha_3,-\alpha_3, \dots, \alpha_m,-\alpha_m, 0)$.
    
    In both cases, the divisor $E_{|_{\mathcal{U}}} $ breaks into two components $E_1$ and $E_2$ such that $E_1=\mathrm{pr}_1^*(F)$ and $E_2 = \mathrm{pr}_2^*(F^-)$, where $\mathrm{pr}_1\colon \mathcal{U}\to U_x^{[2]}$ and $\mathrm{pr}_2\colon \mathcal{U}\to (U_x^-)^{[2]}$ denote the projections and $F\subset U_{x}^{[2]}$ (resp. $F^-\subset (U^-_x)^{[2]}$) is the divisor of non-reduced subschemes. Moreover, $D_{|_{\mathcal{V}}} = \widetilde{\mathrm{pr}}_1^*(F)$, where $\widetilde{\mathrm{pr}}_1\colon \mathcal{V}\to U_x^{[2]}$ is the projection. Notice that $-1\in \Aut(A)$ induces an isomorphism $-1\colon U_x^{[2]}\to (U_x^{-})^{[2]} $ which identifies $F$ with~$F^{-}$.
    From the commutative diagram 
    \[
    \begin{tikzcd}
    	\mathcal{V} \arrow[hook]{rr}{\iota} \arrow{d}{\widetilde{\mathrm{pr}}_1} && \mathcal{U} \arrow{d}{\mathrm{pr}_1\times \mathrm{pr}_2}\\
    	U_{x}^{[2]} \arrow[hook]{rr}{\mathrm{id} \times (-1)} && U_x^{[2]} \times (U^{-}_x)^{[2]}
    \end{tikzcd}
	\]
	we deduce that $\iota^*(E_1) = D_{|_{\mathcal{V}}} = \iota^*(E_2)$, and therefore $\iota^*(E)$ has multiplicity $2$ at the divisor $D\subset \Km(A)^{[m]}$.
    
	\textbf{Conclusion.} Recalling that we have $[E]=2\xi$ in $H^2(K^n(A),\ZZ)$ and $[D]=2\delta$ in $H^2(\Km(A)^{[m]},\ZZ)$, the $3$ cases discussed above yield $(i), (ii)$ and $(iii)$.    
\end{proof}

\subsection{} The key property of the embedding $\iota\colon \Km(A)^{[m]}\to K^{n}(A)$ of Definition \ref{def:W} is that it deforms together with $K^{n}(A)$. 
More precisely, we have the following. 
\begin{proposition}\label{prop:deformation}
	Let $\mathcal{K}\to B$ be a smooth proper family of complex manifolds of $\mathrm{Kum}^{n}$-type over a connected manifold $B$, such that for some $0\in B$ we have $\mathcal{K}_0 = K^{n}(A)$, for some abelian surface $A$. Let $\iota_0\colon \Km(A)^{[m]}\to K^{n}(A)$ be the embedding discussed above, where $n=2m-1$ or $n=2m$. 
	Then, up to a finite \'etale base-change, there exist a smooth and proper family $\mathcal{W}\to B$ of manifolds of $\mathrm{K}3^{[m]}$-type with $\mathcal{W}_0 = \Km(A)^{[m]}$ and a closed embedding $\iota\colon \mathcal{W}\hookrightarrow \mathcal{K}$ over $B$ extending $\iota_0$.
\end{proposition}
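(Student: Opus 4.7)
The strategy is to deform the involution $\tau_0 \coloneqq -1\colon K^n(A)\to K^n(A)$ together with the family and identify $\mathcal{W}$ as the maximal-dimensional component of its fixed scheme. Since $\tau_0$ acts trivially on $H^2(K^n(A),\QQ)$, it fixes the symplectic form $\omega \in H^{2,0}$, and hence the symplectic isomorphism $T_{K^n(A)}\cong \Omega^1_{K^n(A)}$ is $\tau_0$-equivariant. It follows that $\tau_0$ acts trivially on $H^1(K^n(A), T_{K^n(A)})\cong H^{1,1}(K^n(A))$, so the forgetful map from deformations of the pair $(K^n(A),\tau_0)$ to deformations of $K^n(A)$ is an isomorphism on tangent spaces. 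Combined with the unobstructedness of deformations of hyper-K\"ahler manifolds and the equivariant Bogomolov--Tian--Todorov argument, this shows $\tau_0$ extends uniquely to every small deformation of $K^n(A)$.

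Globalizing, and possibly replacing $B$ by a finite \'etale cover in order to trivialize the monodromy action on the finite group of automorphisms acting trivially on $H^2$ (finiteness of which follows from \cite{hassettTschinkel, boissiere2011higher, KMO}), we obtain an involution $\tau\colon \mathcal{K}\to \mathcal{K}$ over $B$ restricting to $\tau_0$ on the fiber at $0$. Let $\mathcal{F}\hookrightarrow \mathcal{K}$ be the fixed scheme of $\tau$; being the fixed locus of a finite group acting on a smooth complex variety, $\mathcal{F}$ is smooth over $B$. By Lemma~\ref{lem:KMO}, its central fiber $\mathcal{F}_0$ decomposes as the disjoint union of $W=\iota_0(\Km(A)^{[m]})$, of dimension $2m$, and components of strictly smaller dimension. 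Let $\mathcal{W}\subset \mathcal{F}$ be the union of those connected components of $\mathcal{F}$ of relative dimension $2m$ over $B$, i.e.\ the closure in $\mathcal{F}$ of the locus whose fiber at $0$ is $W$. Then $\mathcal{W}\to B$ is smooth and proper with $\mathcal{W}_0 = W\cong \Km(A)^{[m]}$; by Ehresmann's theorem every fiber is diffeomorphic to $\Km(A)^{[m]}$, hence of $\mathrm{K}3^{[m]}$-type, and the inclusion $\iota\colon \mathcal{W}\hookrightarrow \mathcal{K}$ extends $\iota_0$.

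The delicate step is the first one: the simultaneous deformation of the involution across the whole family. The infinitesimal extension is formal once the triviality of $\tau_0^{*}$ on $H^{1,1}$ is in hand, but the global statement requires controlling the monodromy of the family in $\Aut(\mathcal{K}_b)$; the finite \'etale base change is exactly what absorbs this monodromy, which is harmless since the conclusion descends up to such a cover. Once $\tau$ is available on $\mathcal{K}$, the remaining steps are formal: smoothness of the fixed locus for finite group actions, the identification of the top-dimensional component via upper semicontinuity of fiber dimension on a smooth morphism, and the fact that deformation type is locally constant in smooth proper families.
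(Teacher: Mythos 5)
Your proposal is correct and follows essentially the same route as the paper: extend the involution $-1$ to the whole family after a finite \'etale base change killing the monodromy on $\Aut_0$, take its fixed locus, and isolate the unique component of maximal relative dimension $2m$ via Lemma~\ref{lem:KMO}. The only difference is that where you re-derive the deformation-invariance of the involution from the triviality of its action on $H^1(T)\cong H^{1,1}$ and use the general smoothness of fixed loci of finite group actions, the paper simply cites \cite[Theorem 2.1]{hassettTschinkel} (deformation invariance of $\Aut_0$, with the explicit computation $\Aut_0(K^n(A))=A_{n+1}\rtimes\langle -1\rangle$ from \cite{boissiere2011higher}) and \cite{Camere} for the fixed locus being a smooth proper family of symplectic manifolds.
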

\begin{proof} 
	By \cite[Theorem 2.1]{hassettTschinkel}, the group $\Aut_0(X)$ of automorphisms inducing the identity on $H^2(X,\ZZ)$ is deformation invariant, for any hyper-K\"ahler manifold $X$. It was computed in \cite{boissiere2011higher} that for $K^n(A)$ we have $\Aut_0(K^n(A))= A_{n+1} \rtimes \langle -1\rangle$; thus, any $K$ of $\mathrm{Kum}^n$-type admits an action of $\Aut_0(K)\cong (\ZZ/(n+1)\ZZ)^4 \rtimes \ZZ/2\ZZ$. 
	In particular, if $\mathcal{K}\to B$ is a family as in the statement, the action of $-1 \in \Aut(A)$ on the fibre $\mathcal{K}_0$ extends to a fibrewise automorphism of $\mathcal{K}\to B$, up to a finite \'etale base-change possibly needed in order to trivialize the monodromy action on $\Aut_0(\mathcal{K}_0)$. For each $b\in B$, the action of $-1$ on $\mathcal{K}_b$ is a symplectic involution. 
	By \cite{Camere}, each component $\mathcal{Y}$ of the fixed locus $\mathcal{K}^{(-1)}$ is a smooth and proper family $\mathcal{Y}\to B$ of symplectic manifolds, or just the image of a section of $\mathcal{K}\to B$. 
	By Lemma \ref{lem:KMO}, for $n=2m$ or $n=2m-1$ there exists a unique such component $\mathcal{W}$ of maximal relative dimension~$2m$, such that $\mathcal{W}_0 = \Km(A)^{[m]}$ is embedded into $ \mathcal{K}_0$ via $\iota_0\colon \Km(A)^{[m]}\hookrightarrow K^{n}(A)$. We thus have obtained the desired subfamily $\mathcal{W}\to B$ of manifolds of $\mathrm{K}3^{[m]}$-type.
\end{proof} 

As a consequence, the cohomology class $[\iota(\Km(A)^{[m]})]$ remains algebraic, and in particular a Hodge class, on any deformation of $K^{n}(A)$.

\begin{definition}
	A canonical Hodge class on a hyper-K\"ahler manifold $X$ is a Hodge class $\eta \in H^{2k}(X,\QQ)$ which remains Hodge on any deformation of~$X$.
\end{definition}
The most obvious examples are given by the Chern classes of $X$.
Any canonical Hodge class $\alpha\in H^{4k}(X,\QQ)$ on $X$ satisfies the generalized Fujiki relation \cite{fujiki1987}: there exists a constant $c(\alpha)\in \QQ$ such that for any $\gamma\in H^2(X,\QQ)$ we have
\begin{equation}\label{eq:genFujiki}
\int_{X} \alpha\cdot (\gamma)^{2n-2k} = \tfrac{(2n-2k)!}{(n-k)!\, 2^{n-k}} \cdot c(\alpha)\cdot q_X(\gamma,\gamma)^{n-k},
\end{equation}
where $\dim X =2n$ and $q_X$ is the Beauville-Bogomolov form on $H^2(X,\QQ)$. The constant~$c(1)$ is called the reduced Fujiki constant of $X$.

Let now $\mathcal{K}\to B$ be a family of manifolds of $\mathrm{Kum}^{n}$-type as in Proposition \ref{prop:deformation}. Up to a finite \'etale base-change, that proposition yields a subfamily $\mathcal{W}\to B$ of manifolds of $\mathrm{K}3^{[m]}$-type, where $n=2m$ or $n=2m-1$. We let $\iota\colon\mathcal{W}\to \mathcal{K}$ denote the embedding.
\begin{lemma}\label{lem:restriction}
	For any $b\in B$, the pull-back $\iota_b^*\colon H^2(\mathcal{K}_b,\ZZ)\to H^2(\mathcal{W}_{b},\ZZ)$ is injective and multiplies the form by a factor $2$.
\end{lemma}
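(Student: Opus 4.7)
The plan is to reduce to the special fibre by deformation invariance, using that the case $b=0$ is precisely the content of Proposition~\ref{prop:computation}.

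First, since $\pi_{\mathcal{K}}\colon \mathcal{K}\to B$ and $\pi_{\mathcal{W}}\colon \mathcal{W}\to B$ are smooth and proper and $\iota\colon \mathcal{W}\hookrightarrow \mathcal{K}$ is a closed embedding over $B$, Ehresmann's fibration theorem yields simultaneous $C^{\infty}$-trivializations of the two families which are compatible with $\iota$. It follows that $R^2\pi_{\mathcal{K}*}\ZZ$ and $R^2\pi_{\mathcal{W}*}\ZZ$ are local systems of finitely generated abelian groups on $B$, and that $\iota^*$ induces a morphism of local systems between them. The Beauville-Bogomolov forms $q_{\mathcal{K}}$ and $q_{\mathcal{W}}$ are deformation invariants of the topological types of manifolds of $\mathrm{Kum}^n$- and $\mathrm{K}3^{[m]}$-type respectively (they can be recovered topologically from the Fujiki relation \eqref{eq:genFujiki} applied to the fundamental class); hence they are locally constant sections of the corresponding local systems of quadratic forms on $B$.

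The two assertions of the lemma are now pointwise statements about a morphism of local systems carrying locally constant bilinear forms. Injectivity of $\iota_b^*$ is equivalent to the vanishing of the kernel sub-local system $\ker(\iota^*)\subset R^2\pi_{\mathcal{K}*}\ZZ$, which, on a connected base, is detected at any single point. Likewise, the identity $q_{\mathcal{W}}(\iota_b^* x,\iota_b^* y) = 2\, q_{\mathcal{K}}(x,y)$ amounts to the vanishing of a section of a local system of bilinear forms on $B$, and again this property propagates from one fibre to all fibres over the connected manifold $B$.

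Since Proposition~\ref{prop:computation} establishes both properties for the fibre over $0\in B$, where $\mathcal{K}_0=K^{n}(A)$, $\mathcal{W}_0=\Km(A)^{[m]}$ and $\iota_0$ is the embedding of Definition~\ref{def:W}, the conclusion extends to every $b\in B$. The only substantive input is the deformation invariance of the Beauville-Bogomolov form, which is standard; there is no genuine obstacle beyond setting up the local-system formalism.
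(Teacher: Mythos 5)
There is a genuine gap, and it is a circularity. You invoke Proposition~\ref{prop:computation} to supply the base case at $b=0$, but in the paper the logical order is the reverse: the proof of Proposition~\ref{prop:computation} opens with ``Thanks to Lemma~\ref{lem:restriction}, it remains to show that the image of $\iota^*$ \dots is a saturated sublattice.'' In other words, injectivity and the factor $2$ at the special fibre are exactly what Lemma~\ref{lem:restriction} is responsible for establishing, and Proposition~\ref{prop:computation} only adds primitivity on top of it. Citing the proposition here therefore assumes the conclusion. The substantive content you are missing is the computation at the fibre $\mathcal{K}_0=K^n(A)$: one must determine $\iota_0^*(\xi)$ explicitly (this is Lemma~\ref{lem:restrictionE}, proved by local analysis along the divisors $R_\tau$, $R_0$ and $D$, including the multiplicity-$3$ computation on the Brian\c{c}on variety), and then feed the resulting square $q(\iota_0^*\xi,\iota_0^*\xi)$ into the generalized Fujiki relation \eqref{eq:genFujiki} for the canonical class $[\iota(\Km(A)^{[m]})]$ to pin down the constant $c=2^m$, with an extra sign argument (K\"ahler classes go to K\"ahler classes) when $m$ is even. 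None of this appears in your sketch.

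The deformation formalism itself is sound and is essentially what the paper does for the second assertion (``it suffices to consider $\iota\colon \Km(A)^{[m]}\to K^n(A)$''): the identity $q_{\mathcal{W}}(\iota_b^*x,\iota_b^*y)=2\,q_{\mathcal{K}}(x,y)$ is the vanishing of a flat section, so it propagates over the connected base. For injectivity the paper argues slightly differently — it enlarges the family so that a very general fibre has irreducible $H^2$, notes that $\iota_b^*$ is nonzero because a symplectic form restricts to a symplectic form, and concludes injectivity there before spreading it out — but your local-system version would also work once a single fibre is handled. The point is that the local-system argument only transports the statement between fibres; it cannot produce it at any fibre, and that production step is the actual content of the lemma.
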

\begin{proof}
	Extending our family $\mathcal{K}\to B$ if necessary, we may assume that the very general fibre has Picard rank $0$, so that $H^2(\mathcal{K}_b,\ZZ)$ is an irreducible Hodge structure for very general $b$. A symplectic form on $\mathcal{K}_b$ restricts to a symplectic form on $\mathcal{W}_b$ (see \cite[Proposition 3]{Camere}), and hence $\iota^*_b$ is not zero. But then $\iota^*_b\colon H^2(\mathcal{K}_b,\QQ)\to H^2(\mathcal{W}_b,\QQ)$ must be an embedding of rational Hodge structures for very general $b\in B$. Since $H^2(\mathcal{K}_b,\ZZ)$ is torsion-free, we conclude that $\iota^*_b\colon H^2(\mathcal{K}_b,\ZZ) \to H^2(\mathcal{W}_b,\ZZ)$ is injective, for all $b\in B$. 
	
	For the second assertion, it suffices to consider $\iota\colon \Km(A)^{[m]}\to K^{n}(A)$. Assume first that $n=2$ and $m=1$. The cohomology class $[\iota(\Km(A))]\in H^4(K^2(A),\ZZ)$ is a canonical Hodge class and, hence, there exists a constant $c$ such that 
	\[ q_{\Km(A)} (\iota^*(\gamma), \iota^*(\gamma))= \int_{\Km(A)} \iota^*(\gamma)^{2} = \int_{K^2(A)} [\Km(A)]\cdot \gamma^2=c \cdot q_{K^{2}(A)} (\gamma, \gamma), \]
	for any $\gamma\in H^2(K^2(A),\ZZ)$. For the class $\xi\in H^2(K^2(A),\ZZ)$, we have $q_{K^2(A)}(\xi,\xi)=-6$ while $q_{\Km(A)}(\iota^*(\xi), \iota^*(\xi))= -12$ by Lemma \ref{lem:restrictionE}.$(i)$, and we get $c=2$.
	
	Consider next the case when $n=2m-1$ is odd, with $m\geq 2$. 
	The cohomology class $[\iota(\Km(A)^{[m]})]\in H^{4m-4}(K^{2m-1}(A),\ZZ)$ is a canonical Hodge class. Using the generalized Fujiki relation \eqref{eq:genFujiki} for $[\iota(\Km(A)^{[m]})]$ and Fujiki relation for $\mathrm{Km}(A)^{[m]}$, there exists a constant $c$ such that
	\begin{equation}\label{eq:genFujiki1}
		\begin{split}
		\tfrac{(2m)!}{m! 2^m} \cdot
		c_{\mathrm{K}3^{[m]}}\cdot q_{\Km(A)^{[m]}}(\iota^*(\gamma),\iota^*(\gamma))^m & = \int_{\Km(A)^{[m]}} \iota^*(\gamma)^{2m} \\
		&  = \tfrac{(2m)!}{m! 2^m} \cdot 
		 c \cdot q_{K^{2m-1}(A)} (\gamma, \gamma)^m,
		 \end{split}
	\end{equation} 
for any $\gamma\in H^{2}(K^{2m-1}(A),\ZZ)$, where $c_{\mathrm{K}3^{[m]}} = 1$ is the reduced Fujiki constant of manifolds of $\mathrm{K}3^{[m]}$-type (computed in \cite{beauville1983varietes}).
For the class $\xi\in H^2(K^{2m-1}(A),\ZZ)$ which is half the class of the divisor of non-reduced subschemes, we have $q_{K^{2m-1}(A)} (\xi, \xi)= -4m$, and, by Lemma \ref{lem:restrictionE}.$(ii)$, we have 
$q_{\mathrm{Km}(A)^{[m]}} ( \iota^*(\xi),\iota^*(\xi)) = -8m$. We thus obtain
\[ 
 (-8m)^m = c \cdot (-4m)^m,
\]
which gives $c=2^m$. Therefore, for any $\gamma\in H^2(K^{2m-1}(A),\ZZ)$ equation \eqref{eq:genFujiki1} yields  $q_{\Km(A)^{[m]}}(\iota^*(\gamma),\iota^*(\gamma))^m =2^m \cdot q_{K^{2m-1}(A)} (\gamma, \gamma)^m$, and, hence, $\iota^*$ multiplies the form by~$2$ (for $m$ even, the sign is determined as $\iota^*$ sends K\"ahler classes to K\"ahler classes). 

Finally, assume $n=2m\geq 4$ is even. As above, there exists a constant $c$ such that 
	\begin{align}\label{eq:genFujiki2}
	c_{\mathrm{K}3^{[m]}}\cdot q_{\Km(A)^{[m]}}(\iota^*(\gamma),\iota^*(\gamma))^m & = \int_{\Km(A)^{[m]}} \iota^*(\gamma)^{2m} 
	= c \cdot q_{K^{2m}(A)} (\gamma, \gamma)^m,
\end{align} 
for all $\gamma\in H^2(K^{2m}(A),\ZZ)$. For the class $\xi\in H^2(K^{2m}(A),\ZZ)$ which is half the class of non-reduced subschemes, we have $q_{K^{2m}(A)} (\xi, \xi)= -4m-2$, while we have $q_{\Km(A)^{[m]}} (\iota^*(\xi),\iota^*(\xi)) = -8m -4 $ by Lemma \ref{lem:restrictionE}.$(iii)$. As $c_{\mathrm{K}3^{[m]}}=1$, we obtain 
\[ 
(-8m-4)^m = c \cdot (-4m-2)^m,
 \]
and, hence, $c=2^m$; by \eqref{eq:genFujiki2}, we have  $q_{\Km(A)^{[m]}}(\iota^*(\gamma),\iota^*(\gamma))^m =2^m \cdot q_{K^{2m}(A)} (\gamma, \gamma)^m$ for any $\gamma\in H^2(K^{2m}(A),\ZZ)$, and again we deduce that $\iota^*$ multiplies the form by $2$. 
\end{proof}

\subsection{} 
We can finally conclude our study of the pull-back $\iota^*$.
\begin{proof}[Proof of Proposition \ref{prop:computation}]
	Thanks to Lemma \ref{lem:restriction}, it remains to show that the image of $\iota^*\colon H^2(K^{n}(A),\ZZ)\hookrightarrow H^2(\Km(A)^{[m]},\ZZ)$ is a saturated sublattice. As in \eqref{eq:cohomologyK^n}, we have $H^2(K^n(A),\ZZ) = H^2(A,\ZZ)\oplus \langle \xi\rangle$, and the Hodge structure on $H^2(K^n(A),\ZZ)$ is determined by that on $H^2(A,\ZZ)$ via this equality.
	Moreover, the Hodge structure on $H^2(\Km(A)^{[m]},\ZZ)$ is determined by that on its primitive sublattice $H^2(A,\ZZ)(2)$ (\S\ref{subsec:notationCohomology}). Deforming to a very general complex torus $A$ such that $H^2(A,\ZZ)$ is an irreducible Hodge structure, we see that the restriction $\iota^*$ necessarily maps the summand $H^2(A,\ZZ)$ of $H^2(K^n(A),\ZZ)$ to the sublattice $H^2(A,\ZZ)(2)$ of $H^2(\Km(A)^{[m]},\ZZ)$. 
	In fact, the restriction of $\iota^*$ to $H^2(A,\ZZ)\subset H^2(K^n(A),\ZZ)$ is surjective onto $H^2(A,\ZZ)(2)\subset H^2(\Km(A)^{[m]},\ZZ)$, as follows from the discriminant-index formula (see \cite[Chapter 14]{huyK3}) since $\iota^*$ is injective and multiplies the form by $2$ by Lemma \ref{lem:restriction}.
	
	Assume first that $n=2$ and $m=1$. Then, by the above and Lemma \ref{lem:restrictionE}, the image of $\iota^*$ is the sublattice of $H^2(\Km(A),\ZZ)$ generated by $H^2(A,\ZZ)(2)$ and the class $[C_0]+\tfrac{1}{2}\sum_{\tau\in A_2}[C_{\tau}]$. This sublattice is saturated: if $w\coloneqq \alpha u + \beta \cdot \bigl([C_0] +\tfrac{1}{2} \sum_{\tau\in A_2} [C_{\tau}]\bigr)$ is an integral class in $H^2(\Km(A),\ZZ)$, for some primitive $u\in H^2(A,\ZZ)(2)$ and some rational numbers $\alpha$ and~$\beta$, then from $q_{\Km(A)}(w, [C_{\tau}])=-\beta$ for any $\tau\neq 0$ we deduce that $\alpha$ and $\beta$ are integers, and therefore $w$ belongs to $\mathrm{im}(\iota^*)$. 
	 
	For $m\geq 2$, the lattice $H^2(\Km(A)^{[m]},\ZZ)$ contains with finite index the sublattice $H^2(A,\ZZ)(2)\oplus \langle [R_{\tau}]\rangle_{\tau\in A_2}\oplus \langle \delta\rangle$. 
	By Lemma \ref{lem:restrictionE}, if $n=2m-1$, the image of $\iota^*$ is the sublattice of $H^2(\Km(A)^{[m]},\ZZ)$ generated by $H^2(A,\ZZ)(2)$ and the class $2\delta + \tfrac{1}{2} \sum_{\tau\in A_2 }[R_{\tau}]$, while if $n=2m$ the image of $\iota^*$ is the sublattice of $H^2(\Km(A)^{[m]},\ZZ)$ generated by $H^2(A,\ZZ)(2)$ and the class $2\delta+[R_0] +\tfrac{1}{2} \sum_{\tau\in A_2}[R_{\tau}]$. In both cases, the same argument used above shows that the image of $\iota^*$ is a saturated sublattice of $H^2(\Km(A)^{[m]},\ZZ)$. 
\end{proof}

\begin{remark}
	A manifold of $\mathrm{Kum}^n$-type $K$ admits at least $(n+1)^4$ involutions acting trivially on the second cohomology, whose fixed loci are union of $\mathrm{K}3^{[j]}$-manifolds by~\cite{KMO}. Our results adapt immediately to all components of maximal dimension. 
	For a lower dimensional component $\iota\colon Z\hookrightarrow K$, the argument of Lemma \ref{lem:restriction} gives that $\iota^*\colon H^2(K,\ZZ)\to H^2(Z,\ZZ)$ is injective and multiplies the form by some positive constant~$k$; however, determining this constant and the saturation of the image of $\iota^*$ becomes more difficult.
\end{remark}

\section{K3 surfaces associated with varieties of generalized Kummer type}

\subsection{} In this section we shall conclude the proof of our main results. 
\begin{definition}\label{def:S_K}
	Let $K$ be a variety of $\mathrm{Kum}^{n}$-type. The associated $\mathrm{K}3$ surface $S_K$ is the $\mathrm{K}3$ surface with transcendental lattice Hodge isometric to $H^2_{\mathrm{tr}}(K,\ZZ)(2)$.
\end{definition}
In the special case of the generalized Kummer variety $K^n(A)$ on an abelian surface~$A$, we have $H^2_{\mathrm{tr}}(K^n(A),\ZZ)=H^2_{\mathrm{tr}}(A,\ZZ)$, and the K3 surface $S_{K^n(A)}$ is isomorphic to the Kummer K3 surface $\Km(A)$. This motivates our Definition~\ref{def:S_K}. Note that for $K$ very general $S_K$ is projective of Picard rank $16$, and thus it is not a Kummer K3 surface.

\begin{lemma}
	The $\mathrm{K}3$ surface $S_K$ exists and it is unique up to isomorphism.
\end{lemma}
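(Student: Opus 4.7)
The plan is to combine Nikulin's results on primitive embeddings of even lattices with the global Torelli theorem for K3 surfaces. Set $T:=H^2_{\mathrm{tr}}(K,\mathbb{Z})(2)$, viewed as an even lattice equipped with the Hodge structure of K3 type inherited from $K$. Since $H^2(K,\mathbb{Z})$ carries the BBF form of signature $(3,4)$ and $K$ is projective (so $\mathrm{rk}\,\mathrm{NS}(K)\geq 1$), the lattice $T$ has rank at most $6$ and signature $(2,r)$ with $0\leq r\leq 4$.

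For existence, I would embed $T$ primitively into the K3 lattice $\Lambda_{\mathrm{K3}}=U^{\oplus 3}\oplus E_8(-1)^{\oplus 2}$ using Nikulin's theorem on primitive embeddings into even unimodular lattices, whose numerical hypotheses are trivially verified as $\mathrm{rk}(T)\leq 6$ leaves abundant room in $\Lambda_{\mathrm{K3}}$. Extending the Hodge structure of $T$ to all of $\Lambda_{\mathrm{K3}}$ by declaring the orthogonal complement $N:=T^{\perp}$ to be entirely of type $(1,1)$ produces a point of the K3 period domain; since $N$ has signature $(1,19-r)$, it contains a class of positive square. Surjectivity of the period map then yields a projective K3 surface $S_K$ with a Hodge isometry $H^2_{\mathrm{tr}}(S_K,\mathbb{Z})\cong T$.

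For uniqueness, let $S$ be another K3 surface whose transcendental lattice is Hodge isometric to $T$. I would invoke Nikulin's uniqueness theorem for primitive embeddings---applicable here because $\mathrm{rk}(\Lambda_{\mathrm{K3}})-\mathrm{rk}(T)\geq 16$ far exceeds $2+\ell(q_T)\leq 2+\mathrm{rk}(T)\leq 8$, while the signature inequalities $t_+<l_+$ and $t_-<l_-$ are automatic---to conclude that the primitive embeddings $T\hookrightarrow \Lambda_{\mathrm{K3}}$ arising from $S$ and from $S_K$ differ by an isometry of $\Lambda_{\mathrm{K3}}$. Composing with the given Hodge isometries of transcendental lattices produces a Hodge isometry $H^2(S,\mathbb{Z})\cong H^2(S_K,\mathbb{Z})$; after composing with reflections in $(-2)$-classes so as to send K\"ahler cone to K\"ahler cone, the strong Torelli theorem for K3 surfaces gives $S\cong S_K$.

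The only delicate point is the verification of Nikulin's numerical hypotheses, which is elementary given the small rank of $T$; the final Torelli step presents no difficulty for projective K3 surfaces, since reflections in $(-2)$-classes always allow us to promote an abstract Hodge isometry of $H^2$ to an effective one. Hence the main work is bookkeeping rather than a genuine obstacle.
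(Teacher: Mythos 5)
Your argument is correct and is essentially the paper's own proof with its two cited references unpacked: the paper invokes Morrison's Corollary 2.10 for existence, whose proof is exactly your Nikulin-embedding-plus-surjectivity-of-the-period-map argument, and Huybrechts' result that projective K3 surfaces with Hodge isometric transcendental lattices and Picard rank at least 12 are isomorphic for uniqueness, whose proof is exactly your Nikulin-uniqueness-plus-Torelli argument. The only micro-imprecision is in the last step: besides reflections in $(-2)$-classes one may also need to compose with $-\mathrm{id}$ (to fix the component of the positive cone) before applying strong Torelli, which does not affect the conclusion that $S\cong S_K$.
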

\begin{proof}
	The lattice $H^2_{\mathrm{tr}}(K,\ZZ)(2)$ has rank at most $6$ and signature~$(2,k)$. Any such lattice can be embedded primitively in the K3 lattice and, by the Torelli theorem, there exists a K3 surface $S_K$ whose transcendental lattice $H^2_{\mathrm{tr}}(S_K,\ZZ)$ is Hodge isometric to $H^2_{\mathrm{tr}}(K,\ZZ)(2)$ (see \cite[Corollary 2.10]{Morrison1984}). 
	The K3 surface $S_K$ is necessarily projective, of Picard rank at least $16$, and it is determined up to isomorphism because projective K3 surfaces with Hodge isometric transcendental lattices and Picard rank at least~$12$ are isomorphic (see \cite[Chapter 16, Corollary 3.8]{huyK3}). 
\end{proof}

The K3 surfaces associated to varieties of generalized Kummer type can be easily characterized via the Torelli theorem.
\begin{lemma}
	Let $S$ be a projective $\mathrm{K}3$ surface. Then $S$ is isomorphic to the $\mathrm{K}3$ surface $S_K$ associated to some variety $K$ of $\mathrm{Kum}^{n}$-type if and only if there exists a primitive embedding of lattices $H^2_{\mathrm{tr}}(S,\ZZ)\hookrightarrow \mathrm{U}(2)^{\oplus 3} \oplus \langle -4n-4\rangle$.
\end{lemma}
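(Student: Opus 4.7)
The forward implication is essentially a bookkeeping consequence of the definition of $S_K$. Recall that the second cohomology lattice of a $\mathrm{Kum}^{n}$-type variety is isometric to $L\coloneqq \mathrm{U}^{\oplus 3} \oplus \langle -2n-2\rangle$, and that $H^2_{\mathrm{tr}}(K,\ZZ)$ sits primitively inside $H^2(K,\ZZ)\cong L$. Multiplying the bilinear form by $2$ throughout, the inclusion $H^2_{\mathrm{tr}}(K,\ZZ)\hookrightarrow H^2(K,\ZZ)$ becomes the primitive inclusion
\[
H^2_{\mathrm{tr}}(S,\ZZ) \cong H^2_{\mathrm{tr}}(K,\ZZ)(2) \hookrightarrow H^2(K,\ZZ)(2) \cong \mathrm{U}(2)^{\oplus 3}\oplus \langle -4n-4\rangle,
\]
which is the required embedding.

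For the converse, suppose we are given a primitive embedding $j\colon H^2_{\mathrm{tr}}(S,\ZZ)\hookrightarrow L(2)$. The plan is to use $j$ to transport the Hodge structure of $S$ to the abstract $\mathrm{Kum}^n$-lattice $L$, and then to invoke the surjectivity of the period map together with a projectivity criterion to realize it as the transcendental datum of an actual $\mathrm{Kum}^n$-type manifold $K$. Concretely, declare $H^{2,0}_L \coloneqq j_{\CC}(H^{2,0}(S)) \subset L\otimes\CC$ and define the rest of the Hodge decomposition on $L$ by orthogonality with respect to the form on $L$. Since the form on $L(2)$ is just $2$ times the form on $L$, the conditions $q_L(\sigma,\sigma)=0$ and $q_L(\sigma,\bar\sigma)>0$ for a generator $\sigma$ of $H^{2,0}_L$ follow directly from the corresponding conditions on $S$, so we obtain a genuine period point in the period domain of the $\mathrm{Kum}^n$-deformation class.

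Next I would verify that this Hodge structure on $L$ admits an integral Hodge class of positive Beauville--Bogomolov square, which is needed for projectivity. Because $H^2_{\mathrm{tr}}(S,\ZZ)$ is transcendental, every rational Hodge class in $L$ lies in $j(H^2_{\mathrm{tr}}(S,\ZZ))^\perp_{\QQ}\cap L$. The signature of $L$ is $(3,4)$ and that of $H^2_{\mathrm{tr}}(S,\ZZ)$ is $(2,k-2)$ with $k=\mathrm{rk}\, H^2_{\mathrm{tr}}(S,\ZZ)\leq 6$, so this orthogonal complement has signature $(1,6-k)$ and therefore contains a vector of positive square. By the surjectivity of the period map for manifolds of $\mathrm{Kum}^n$-type (Huybrechts) combined with the standard projectivity criterion, there exists a projective variety $K$ of $\mathrm{Kum}^n$-type together with a marking $H^2(K,\ZZ)\xrightarrow{\sim} L$ inducing the Hodge structure we constructed. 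The transcendental part of this Hodge structure on $L$ coincides, as a Hodge lattice, with $j(H^2_{\mathrm{tr}}(S,\ZZ))(1/2)$, so $H^2_{\mathrm{tr}}(K,\ZZ)(2)$ is Hodge-isometric to $H^2_{\mathrm{tr}}(S,\ZZ)$. By the uniqueness of the associated K3 surface this yields $S\cong S_K$, as desired.

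The only genuinely non-formal ingredient is the last step, where one invokes the surjectivity of the period map together with the possibility of producing a projective representative within a given deformation class once a Hodge class of positive square is available; both facts are classical for hyper-K\"ahler manifolds, so the main work is the signature check above and making sure that scaling the form by $2$ does not interfere with any of the period-theoretic conditions, which it does not.
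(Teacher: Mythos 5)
Your proposal is correct and follows essentially the same route as the paper: the forward direction is the same bookkeeping with the lattice $\Lambda_{\mathrm{Kum}^n}(2)=\mathrm{U}(2)^{\oplus 3}\oplus\langle -4n-4\rangle$, and the converse likewise transports the Hodge structure through the primitive embedding, invokes the surjectivity of the period map for $\mathrm{Kum}^n$-manifolds, gets projectivity from the signature of the transcendental lattice, and concludes $S\cong S_K$ by the uniqueness of the associated K3. The only cosmetic difference is that you spell out the period-domain conditions and the positive-square Hodge class explicitly, while the paper places the Hodge structure directly on the primitive sublattice $T$ with $T(2)=\mathrm{im}(l)$ and cites Huybrechts' projectivity criterion.
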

\begin{proof}
	The lattice $\mathrm{U}(2)^{\oplus 3} \oplus \langle -4n-4 \rangle$ is the Beauville-Bogomolov lattice $\Lambda_{\mathrm{Kum}^{n}}(2)$ of $\mathrm{Kum}^{n}$-manifolds with the form multiplied by $2$. Thus, by definition, for the K3 surface $S_K$ associated to a variety $K$ of $\mathrm{Kum}^n$-type we have a primitive embedding of~$H^2_{\mathrm{tr}}(S_K,\ZZ) $ into the lattice $\mathrm{U}(2)^{\oplus 3} \oplus \langle -4n-4\rangle$.
	Conversely, assume that $S$ is a projective K3 surface and $l\colon H^2_{\mathrm{tr}}(S,\ZZ)\hookrightarrow \Lambda_{\mathrm{Kum}^{n}}(2)$ is a primitive embedding. Let $T\subset \Lambda_{\mathrm{Kum}^{n}}$ be the primitive sublattice such that $\mathrm{im}(l)=T(2)$, and equip $T$ with the Hodge structure induced by that on $H^2_{\mathrm{tr}}(S,\ZZ)$. By the surjectivity of the period map for hyper-K\"ahler manifolds (\cite[Theorem 8.1]{Huy99}), there exists a manifold~$K$ of $\mathrm{Kum}^n$-type with $H^2_{\mathrm{tr}}(K,\ZZ)$ Hodge isometric to $T$. By construction, we have a Hodge isometry 
	\[ H^2_{\mathrm{tr}}(S,\ZZ) \xrightarrow{ \ \sim \ } H^2_{\mathrm{tr}}(K,\ZZ)(2);
	\] 
	${K}$ is projective by \cite[Theorem 3.11]{Huy99}, since its transcendental lattice has signature~$(2,k)$. Moreover, the K3 surface $S$ is isomorphic to $S_K$, as they have Hodge isometric transcendental lattices and Picard rank at least~$16$.
\end{proof}

Passing to rational coefficients, a similar argument yields the following.
\begin{lemma}\label{lem:isogenies}
	Let $S$ be a projective $\mathrm{K}3$ surface. The following are equivalent:
	\begin{enumerate}[label=(\roman*)]
		\item there exists a rational Hodge isometry $H^2_{\mathrm{tr}}(S,\QQ)\xrightarrow{\ \sim \ } H^2_{\mathrm{tr}}(S_K,\QQ)$, where $S_K$ is the $\mathrm{K}3$ surface associated to a variety $K$ of $\mathrm{Kum}^n$-type, for some $n\geq 2$; 
		\item there exists an isometric embedding of $H^2_{\mathrm{tr}}(S,\QQ)$ into $\bigl(\mathrm{U}^{\oplus 3}\oplus \langle -m \rangle\bigr)\otimes_{\ZZ} \QQ$, for some positive integer $m$. 
	\end{enumerate}
\end{lemma}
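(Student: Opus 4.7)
My plan is to prove the two implications separately. The direction $(i)\Rightarrow (ii)$ is essentially formal from the definition of $S_K$; the substance lies in $(ii)\Rightarrow (i)$, where I would combine a rational rescaling of the given embedding with the surjectivity of the period map for $\mathrm{Kum}^n$-manifolds.

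For $(i)\Rightarrow (ii)$: by Definition~\ref{def:S_K} we have $H^2_{\mathrm{tr}}(S_K,\QQ)\cong H^2_{\mathrm{tr}}(K,\QQ)(2)$, and $H^2_{\mathrm{tr}}(K,\ZZ)$ embeds primitively into the Beauville--Bogomolov lattice $\Lambda_{\mathrm{Kum}^n}\cong \mathrm{U}^{\oplus 3}\oplus \langle -(2n+2)\rangle$. Tensoring with $\QQ$, multiplying the form by $2$, and using that any two non-degenerate isotropic binary quadratic forms over $\QQ$ are isometric (so in particular $\mathrm{U}(2)_\QQ\cong \mathrm{U}_\QQ$), one obtains an isometric $\QQ$-embedding of $H^2_{\mathrm{tr}}(S,\QQ)\cong H^2_{\mathrm{tr}}(S_K,\QQ)$ into $(\mathrm{U}^{\oplus 3}\oplus \langle -(4n+4)\rangle)_\QQ$, giving $(ii)$ with $m=4n+4$.

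For $(ii)\Rightarrow (i)$, I aim to construct a $\mathrm{Kum}^n$-variety $K$ such that $H^2_{\mathrm{tr}}(K,\QQ)$ is rationally Hodge isometric to $H^2_{\mathrm{tr}}(S,\QQ)(1/2)$. First I would select an integer $n\geq 2$ with $n+1= m k^2$ for some positive integer $k$ (taking $k=2$ gives $n=4m-1\geq 3$ for every $m\geq 1$), so that $\langle -(2n+2)\rangle_\QQ\cong \langle -2m\rangle_\QQ$. Combined with $\mathrm{U}_\QQ\cong \mathrm{U}(1/2)_\QQ$, this identifies $\Lambda_{\mathrm{Kum}^n,\QQ}$ with $(\mathrm{U}^{\oplus 3}\oplus \langle -m\rangle)_\QQ(1/2)$. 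Rescaling the embedding from $(ii)$ by $1/2$ then yields an isometric $\QQ$-sub-Hodge-structure $\iota'\colon H^2_{\mathrm{tr}}(S,\QQ)(1/2)\hookrightarrow \Lambda_{\mathrm{Kum}^n,\QQ}$. The line $\iota'(H^{2,0}(S))\subset \Lambda_{\mathrm{Kum}^n,\CC}$ satisfies the conditions to be a period, so the surjectivity of the period map (\cite[Theorem 8.1]{Huy99}) produces a $\mathrm{Kum}^n$-manifold $K$ realizing it.

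The step requiring most care is verifying that $H^2_{\mathrm{tr}}(K,\QQ)$ coincides with the full image $\iota'(H^2_{\mathrm{tr}}(S,\QQ)(1/2))$, and not with a strictly smaller sub-Hodge-structure. Here I would argue that $H^2_{\mathrm{tr}}(S,\QQ)$ is simple as a $\QQ$-Hodge structure: any non-zero sub-Hodge-structure either contains $H^{2,0}(S)$ -- in which case it equals the whole, by the minimality defining $H^2_{\mathrm{tr}}$ -- or is of type $(1,1)$ and hence contained in $\NS(S)_\QQ \cap H^2_{\mathrm{tr}}(S,\QQ)=0$. Therefore $\iota'(H^2_{\mathrm{tr}}(S,\QQ)(1/2))$ is a simple sub-Hodge-structure of $\Lambda_{\mathrm{Kum}^n,\QQ}$ containing the period line, so it coincides with $H^2_{\mathrm{tr}}(K,\QQ)$. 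Projectivity of $K$ follows from a signature count, since $H^2_{\mathrm{tr}}(K,\QQ)$ has signature $(2,\ast)$ while $\Lambda_{\mathrm{Kum}^n,\QQ}$ has signature $(3,4)$, so $\NS(K)_\QQ$ contains a positive class. Scaling the form back by $2$ then produces the Hodge isometry $H^2_{\mathrm{tr}}(S,\QQ)\cong H^2_{\mathrm{tr}}(S_K,\QQ)$ required by $(i)$. The only delicate point, in my view, is keeping the $\QQ$-rescalings of the hyperbolic plane and of $\langle -m\rangle$ aligned so that the constraint $n\geq 2$ can indeed be arranged for any $m$, rather than any conceptual obstacle.
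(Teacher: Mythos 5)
Your argument is correct and follows essentially the same route as the paper: both directions rest on the rational isometries $\mathrm{U}(k)_{\QQ}\cong\mathrm{U}_{\QQ}$ and $\langle -a\rangle_{\QQ}\cong\langle -ac^2\rangle_{\QQ}$ (so that a suitable $n$ can be chosen), followed by the surjectivity of the period map for $\mathrm{Kum}^n$-manifolds and the projectivity criterion. The one point you spell out more explicitly than the paper --- that $H^2_{\mathrm{tr}}(K,\QQ)$ is the \emph{full} image of the embedding, which you justify via the irreducibility of the transcendental Hodge structure of a projective K3 surface --- is indeed needed and is left implicit there.
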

\begin{proof}
	For any non-zero rational number $k$, we have $\mathrm{U}(k)\otimes_{\ZZ} \QQ \cong \mathrm{U}\otimes_{\ZZ} \QQ $. If~$(i)$ holds, then $H^2_{\mathrm{tr}}(S,\QQ)$ is Hodge isometric to $H^2_{\mathrm{tr}}(K,\QQ)(2)$, and, hence, it embeds isometrically into $\bigl(\mathrm{U}(2)^{\oplus 3} \oplus \langle -4n-4\rangle\bigr)\otimes_{\ZZ} \QQ \cong \bigl(\mathrm{U}^{\oplus 3} \oplus \langle -n-1\rangle\bigr)\otimes_{\ZZ}\QQ$. 
	Conversely, if~$S$ is a projective K3 surface with an embedding $H^2_{\mathrm{tr}}(S,\QQ) \hookrightarrow \bigl(\mathrm{U}^{\oplus 3} \oplus \langle -m\rangle\bigr)\otimes_{\ZZ} \QQ$ for some positive $m\in\ZZ$, we find as well an isometric embedding $$j\colon H^2_{\mathrm{tr}}(S,\QQ)\hookrightarrow \bigl(\mathrm{U}(2)^{\oplus 3} \oplus \langle -4n-4\rangle\bigr) \otimes_{\ZZ} \QQ,$$ for some $n\geq 2$. The quadratic space on the right-hand side is $\Lambda_{\mathrm{Kum}^n}(2)\otimes_{\ZZ}\QQ$. We choose a primitive sublattice $T\subset \Lambda_{\mathrm{Kum}^{n}}$ such that $\mathrm{im}(j)$ coincides with $T(2)\otimes_{\ZZ} \QQ$; under this identification, the Hodge structure on $H^2_{\mathrm{tr}}(S,\QQ)$ induces a Hodge structure of K3-type on $T$. By the surjectivity of the period map, there exists a projective variety $K$ of $\mathrm{Kum}^{n}$-type such that $H^2_{\mathrm{tr}}(K,\ZZ)$ is Hodge isometric to $T$, equipped with this Hodge structure. Hence, by construction, $H^2_{\mathrm{tr}}(S,\QQ)$ is Hodge isometric to $H^2_{\mathrm{tr}}(S_K,\QQ)$.
\end{proof}

\subsection{}\label{subsec:construction}
A priori, the K3 surface $S_K$ and the variety $K$ of $\mathrm{Kum}^n$-type are only related at the level of lattices and Hodge structures. But in fact they are related geometrically, as follows.  
Choose a family $\mathcal{K}\to B$ of manifolds of $\mathrm{Kum}^{n}$-type, with one fibre $\mathcal{K}_{b_0}= K$ and another fibre $\mathcal{K}_{b_1}=K^{n}(A)$ isomorphic to the generalized Kummer variety on an abelian surface $A$. 
By Proposition \ref{prop:deformation}, up to a finite \'etale base-change, we obtain a family $\mathcal{W}\to B$ of $\mathrm{K}3^{[m]}$-manifolds with an embedding $\iota\colon \mathcal{W}\to \mathcal{K}$ over $B$, where $n=2m-1$ or $n=2m$, which extends the embedding $\iota_{b_1}\colon \Km(A)^{[m]}\hookrightarrow K^{n}(A)$ of Lemma \ref{lem:KMO}.
Taking the fibre over $b_0$, we get a submanifold $\iota \colon W_K \hookrightarrow K $ of $\mathrm{K}3^{[m]}$-type.
\begin{lemma}\label{lem:transcendentalIsometry}
	The pull-back along $\iota\colon W_K\hookrightarrow K$ induces a primitive embedding of lattices and Hodge structures
	\[ \iota^*\colon H^2(K,\ZZ)(2) \hookrightarrow H^2(W_K,\ZZ).\]
	In particular, $\iota^*$ induces a Hodge isometry $H^2_{\mathrm{tr}}(K,\ZZ)(2)\xrightarrow{\ \sim \ } H^2_{\mathrm{tr}}(W_K,\ZZ)$ of transcendental lattices.
\end{lemma}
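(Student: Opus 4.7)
The plan is to transport the conclusion of Proposition \ref{prop:computation} from the special fibre of the family $\mathcal{K}\to B$ (where it applies to $\iota_{b_1}\colon \Km(A)^{[m]}\hookrightarrow K^n(A)$) to an arbitrary fibre via parallel transport, and then extract the statement on transcendental lattices as a formal consequence.

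First, I would observe that the direct images $R^2\pi_*\ZZ$ and $R^2\pi'_*\ZZ$ are local systems of lattices on $B$, each equipped with the integral Beauville--Bogomolov form. Being determined topologically via the Fujiki relation, these forms are preserved by parallel transport. The pull-back along $\iota\colon \mathcal{W}\hookrightarrow\mathcal{K}$ defines a morphism of these local systems of lattices. Consequently, the two properties (i) $\iota_b^*$ is injective with saturated image and (ii) $\iota_b^*$ multiplies the form by the fixed factor $2$ are invariant in $b\in B$, since both are preserved under isometries of lattices. At the fibre $b_1$, Proposition \ref{prop:computation} gives (i) and (ii), so they hold at $b_0$ as well. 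That $\iota^*$ is a morphism of $\ZZ$-Hodge structures is automatic from the holomorphicity of $\iota$. This proves the first assertion of the lemma.

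For the transcendental statement, I would argue as follows. Since $\iota^*$ is injective on complex cohomology and respects the Hodge decomposition, and since $H^{2,0}(K)$ and $H^{2,0}(W_K)$ are both one-dimensional, $\iota^*$ restricts to an isomorphism $H^{2,0}(K)\xrightarrow{\sim}H^{2,0}(W_K)$. Consequently, $\iota^*(H^2_{\mathrm{tr}}(K,\QQ))$ is a sub-Hodge structure of $H^2(W_K,\QQ)$ containing $H^{2,0}(W_K)$, so by minimality it contains $H^2_{\mathrm{tr}}(W_K,\QQ)$; symmetrically, the preimage $(\iota^*)^{-1}(H^2_{\mathrm{tr}}(W_K,\QQ))$ is a sub-Hodge structure containing $H^{2,0}(K)$, hence contains $H^2_{\mathrm{tr}}(K,\QQ)$. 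These two inclusions force the equality $\iota^*(H^2_{\mathrm{tr}}(K,\QQ))=H^2_{\mathrm{tr}}(W_K,\QQ)$ of sub-Hodge structures. The primitivity of $\iota^*(H^2(K,\ZZ))$ inside $H^2(W_K,\ZZ)$ guarantees that intersecting these identifications with the integral cohomology yields the desired Hodge isometry $H^2_{\mathrm{tr}}(K,\ZZ)(2)\xrightarrow{\sim} H^2_{\mathrm{tr}}(W_K,\ZZ)$.

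The main obstacle really lies upstream in Proposition \ref{prop:computation}, which was handled by the delicate local Hilbert-scheme computations of the previous section. Granting that proposition, the argument here is a formal matter of deformation invariance on local systems of lattices combined with the standard minimality characterisation of the transcendental lattice; I would expect no essential technical difficulty beyond checking carefully that the Beauville--Bogomolov form varies in a flat manner across the family, which is standard for smooth proper families of hyper-K\"ahler manifolds.
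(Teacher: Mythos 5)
Your proposal is correct and follows essentially the same route as the paper: the paper's proof is a one-line appeal to Proposition \ref{prop:computation} combined with the fact that $\iota\colon W_K\hookrightarrow K$ deforms to the embedding $\Km(A)^{[m]}\hookrightarrow K^n(A)$, which is exactly the parallel-transport argument you spell out. Your additional details (flatness of the Beauville--Bogomolov form, local constancy of injectivity, saturation and the scaling factor, and the minimality characterisation of the transcendental lattice) are all accurate elaborations of what the paper leaves implicit.
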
 
\begin{proof}
	The lemma follows immediately from Proposition \ref{prop:computation} as $\iota\colon W_K\hookrightarrow K$ deforms to the embedding $\Km(A)^{[m]}\hookrightarrow K^{n}(A)$ there considered.
\end{proof}
 We refer to \cite{huybrechts2010geometry, Bri08, BM14b} for background on moduli spaces of sheaves on K3 surfaces.

\begin{proposition}\label{prop:moduliSpace}
	Let $S_K$ be the $\mathrm{K}3$ surface associated to the $\mathrm{Kum}^{n}$-variety $K$. Then: 
	\begin{enumerate}[label=(\roman*)]
		\item if $n=2$, $W_K$ is isomorphic to the $\mathrm{K}3$ surface $S_K$;
		\item if $n\geq 3$, $W_K\subset K$ is birational to a smooth and projective moduli space $M_{S_K, H}(v)$, for some primitive Mukai vector $v$ and a $v$-generic polarization $H$.
	\end{enumerate}
\end{proposition}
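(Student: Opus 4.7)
For claim (i), $n=2$ gives $m=1$, so $W_K$ is itself a K3 surface. Combining Lemma~\ref{lem:transcendentalIsometry} with Definition~\ref{def:S_K} yields a Hodge isometry $H^2_{\mathrm{tr}}(W_K,\ZZ)\cong H^2_{\mathrm{tr}}(S_K,\ZZ)$. Since $W_K\subset K$ is projective, as is $S_K$, and both K3 surfaces have Picard rank at least $16\geq 12$, the refined Torelli theorem \cite[Chapter 16, Corollary 3.8]{huyK3} gives $W_K\cong S_K$.

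For claim (ii), with $m\geq 2$, the plan is to exhibit a primitive Hodge-isometric embedding of integral lattices
\[
\Phi\colon H^2(W_K,\ZZ)\hookrightarrow \widetilde{H}(S_K,\ZZ)
\]
into the Mukai lattice of $S_K$ whose rank-one orthogonal complement is spanned by a primitive class $v$ of Hodge type $(1,1)$ with $v^2=2m-2$. I require $\Phi$ to extend the isometry on transcendental parts supplied by Lemma~\ref{lem:transcendentalIsometry} and Definition~\ref{def:S_K}; since the orthogonal of $H^2_{\mathrm{tr}}(S_K,\ZZ)$ in $\widetilde{H}(S_K,\ZZ)$ is precisely the algebraic Mukai lattice $\mathrm{U}\oplus\NS(S_K)$, the class $v$ is then automatically algebraic. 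Granted such $\Phi$, Markman's Hodge-theoretic characterization of moduli spaces \cite{markman2019monodromy} produces a birational equivalence between $W_K$ and the moduli space $M_{S_K,H}(v)$ for any $v$-generic polarization $H$; the latter is smooth and projective of $\mathrm{K}3^{[m]}$-type by \cite{BM14b}.

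To construct $\Phi$, I would argue by deformation using Proposition~\ref{prop:deformation}. In the model case $K=K^n(A)$, the subvariety $W_K=\Km(A)^{[m]}$ is tautologically the moduli space parametrizing the Mukai vector $v_0=(1,0,1-m)$, and the standard Mukai identification \cite{huybrechts2010geometry} provides the desired embedding with orthogonal class $v_0$ of square $2m-2$. The family $\mathcal{W}\hookrightarrow\mathcal{K}\to B$ of Proposition~\ref{prop:deformation} links this model to an arbitrary $K$; the abstract lattice structure of $H^2(W_K,\ZZ)$ and the Hodge-isometry class of its transcendental part are constant along $B$, so the embedding $\Phi$ should exist for every fibre. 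The main technical point is to verify this compatibility at the level of periods: namely, that the Hodge structure on $\widetilde{H}(S_K,\ZZ)$, determined by the period of $S_K$ and hence by $H^2_{\mathrm{tr}}(K,\ZZ)$, deforms compatibly with that on $H^2(W_K,\ZZ)$. This I would handle by reducing both Hodge structures to the single transcendental piece $H^2_{\mathrm{tr}}(K,\ZZ)$ that governs them.
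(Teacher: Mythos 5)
Part (i) of your argument coincides with the paper's: the Hodge isometry of transcendental lattices from Lemma~\ref{lem:transcendentalIsometry} plus Definition~\ref{def:S_K}, and the uniqueness of a projective K3 surface of Picard rank $\geq 12$ with prescribed transcendental Hodge structure. For part (ii), however, the paper's proof is a one-line application of the lattice-theoretic criterion of \cite[Proposition 4]{Addington2016} for a $\mathrm{K}3^{[m]}$-type variety to be birational to a moduli space of sheaves on a K3 surface, whose hypothesis is exactly the Hodge isometry $H^2_{\mathrm{tr}}(W_K,\ZZ)\cong H^2_{\mathrm{tr}}(S_K,\ZZ)$ already established. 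What you propose is, in effect, a re-proof of that criterion, and as written it has a genuine gap at its central step.

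The gap is this: for varieties of $\mathrm{K}3^{[m]}$-type with $m\geq 2$, a Hodge isometry of second cohomology lattices does \emph{not} imply birationality. By the Verbitsky--Markman Torelli theorem one needs the isometry to be a parallel-transport operator, and the monodromy group is a proper subgroup of the orientation-preserving isometries (those acting by $\pm 1$ on the discriminant group $\ZZ/(2m-2)\ZZ$). So producing \emph{some} primitive Hodge-isometric embedding $\Phi\colon H^2(W_K,\ZZ)\hookrightarrow \widetilde{H}(S_K,\ZZ)$ with algebraic rank-one orthogonal complement $\langle v\rangle$ only gives a Hodge isometry $H^2(W_K,\ZZ)\cong v^{\perp}\cong H^2(M_{S_K,H}(v),\ZZ)$; to invoke Markman's characterization and conclude birationality you must additionally know that $\Phi$ lies in Markman's distinguished $O(\widetilde{\Lambda})$-orbit of embeddings attached to $W_K$. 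Your deformation argument is the natural way to get this (in the model fibre $\Km(A)^{[m]}\subset K^n(A)$ the Mukai embedding for $v_0=(1,0,1-m)$ \emph{is} the canonical one, and the canonical orbit is monodromy-invariant), but as stated it does not close, for the reason you yourself flag: the surfaces $S_{K_b}$ are defined fibrewise by the Torelli theorem, so $\widetilde{H}(S_{K_b},\ZZ)$ is not a local system over $B$ and "the embedding deforms" has no content yet. The repair is to deform the embedding into the \emph{abstract} Mukai lattice $\widetilde{\Lambda}$, use surjectivity of the K3 period map to realize the resulting Hodge structure as $\widetilde{H}(S',\ZZ)$ for an actual K3 surface $S'$, and then identify $S'$ with $S_K$ by the Picard-rank-$\geq 12$ uniqueness again; carried out, this is precisely the proof of Addington's proposition, which the paper sidesteps by citing it.
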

\begin{proof}
	By Lemma \ref{lem:transcendentalIsometry} and the definition of $S_K$, there exists a Hodge isometry $H^2_{\mathrm{tr}}(W_K,\ZZ)\xrightarrow{ \ \sim \ } H^2_{\mathrm{tr}}(S_K,\ZZ)$ of transcendental lattices. If $n=2$, $W_K$ and $S_K$ are isomorphic as they are K3 surfaces with Hodge isometric transcendental lattice and large Picard rank. If $n\geq 3$, the proposition follows applying \cite[Proposition 4]{Addington2016}.
\end{proof}
\begin{remark}
	In fact, for $n\geq 3$, $W_K$ is isomorphic to a moduli space of Bridgeland stable objects on $S_K$, by \cite[Thm 1.2 (c)]{BM14a} (see also \cite[Proposition 2.3]{mongardiwandel}).
\end{remark}

\subsection{} Let $S$ be a projective K3 surface, and let $M=M_{S,H}(v)$ be a smooth and projective moduli space of stable sheaves of dimension $\geq 4$, for a primitive Mukai vector $v$ and a $v$-generic polarization $H$.
Then, following \cite{mukai1987moduli, O'G97}, there exists a quasi-universal sheaf $\mathcal{U}$ over $S\times M$ of similitude $\rho$, and the map 
$$\alpha\mapsto \frac{1}{\rho}\cdot \mathrm{pr}_{M,*}\bigl[ \mathrm{ch}(\mathcal{U})^{\vee} \cdot \sqrt{\mathrm{td}(S)} \cdot \mathrm{pr}_S^*(\alpha)\bigr]_3$$
induces a Hodge isometry 
$\theta\colon H^2_{\mathrm{tr}}(S,\ZZ)\xrightarrow{\ \sim \ } H^2_{\mathrm{tr}}(M,\ZZ)$ of transcendental lattices. Here, the notation $[-]_3$ indicates that we take the K\"unneth component in $H^6(S\times M,\ZZ)$ and $\mathrm{pr}_M$, $\mathrm{pr}_S$ denote the projections from $S\times M$ onto the two factors. 
The standard conjectures hold for both the surface $S$ and the $\mathrm{K}3^{[n]}$-variety $M$, as we recalled in \S\ref{subsec:standardConj}.
Hence, they hold for $S\times M$, and it follows that $\theta$ is the correspondence induced by the algebraic cycle $\bigl[\frac{1}{\rho}\mathrm{ch}(\mathcal{U})^{\vee} \cdot \sqrt{\mathrm{td}(S)}\bigr]_2 $ on $S\times M$.
 
\begin{remark}\label{rmk:h^2}
In fact, the K\"unneth components $\mathsf{h}^2(S)$ and $\mathsf{h}^2(M)$ as well as their transcendental parts $\mathsf{h}^2_{\mathrm{tr}}(S)$ and $\mathsf{h}^2_{\mathrm{tr}}(M)$ are well-defined homological motives; the Hodge isometry~$\theta$ is the realization of an isomorphism of motives $
\Theta\colon \mathsf{h}^2_{\mathrm{tr}} (S) \to \mathsf{h}^2_{\mathrm{tr}}(M) $ in $\mathsf{Mot}$. This again uses the standard conjectures for $S\times M$ and in particular the conservativity of the realization functor when restricted to $\langle \h(S\times M)\rangle_{\mathsf{Mot}}$ (Remark \ref{rmk:conservative}).
\end{remark}

\subsection{} 
Let $K$ be a variety of $\mathrm{Kum}^{n}$-type. By the results of Foster \cite{foster} recalled in Theorem~\ref{thm:foster}, the K\"unneth component $\h^2(K)$ of the motive of $K$ is well-defined in $\mathsf{Mot}$ and $\langle \h^2(K)\rangle_{\mathsf{Mot}}$ is a semisimple and abelian neutral Tannakian category; the transcendental part $\h^2_{\mathrm{tr}}(K)$ of $\h^2(K)$ is also a well-defined homological motive. The next result implies Theorem~\ref{thm:1.1} from the introduction.
\begin{theorem}\label{thm:half2}
	Let $K$ be a variety of $\mathrm{Kum}^{n}$-type and let $S_K$ be the $\mathrm{K}3$ surface associated to $K$. There exists an isomorphism 
	\[ \h^2_{\mathrm{tr}}(S_K) \xrightarrow{ \ \sim \ } \h^2_{\mathrm{tr}}(K) \]
	of motives, whose realization is a Hodge isometry 
	$H^2_{\mathrm{tr}}(S_K,\QQ) \xrightarrow{\ \sim \ } H^2_{\mathrm{tr}} (K,\QQ)(2)$. 
\end{theorem}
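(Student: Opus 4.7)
The plan is to construct the desired motivic isomorphism as a composite of two explicit algebraic correspondences and to conclude that each is a motivic isomorphism by invoking conservativity of the realization functor on a Tannakian subcategory where the standard conjectures are known (Remark~\ref{rmk:conservative}).

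First I would produce an isomorphism $\Theta\colon \h^2_{\mathrm{tr}}(S_K) \xrightarrow{\sim} \h^2_{\mathrm{tr}}(W_K)$ in $\mathsf{Mot}$ using Proposition~\ref{prop:moduliSpace}. For $n=2$ one has $W_K\cong S_K$ and $\Theta$ is the identity. For $n\geq 3$ the variety $W_K$ is birational to a smooth projective moduli space $M = M_{S_K,H}(v)$; a quasi-universal sheaf on $S_K\times M$ yields, as recalled before Remark~\ref{rmk:h^2}, an algebraic cycle whose correspondence action is a Hodge isometry $H^2_{\mathrm{tr}}(S_K,\QQ)\xrightarrow{\sim} H^2_{\mathrm{tr}}(M,\QQ)$ lifting, by Remark~\ref{rmk:h^2}, to an isomorphism of motives $\h^2_{\mathrm{tr}}(S_K)\xrightarrow{\sim}\h^2_{\mathrm{tr}}(M)$. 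The closure of the graph of a birational map $M\dashrightarrow W_K$ furnishes an algebraic correspondence inducing an isomorphism $\h^2(M)\xrightarrow{\sim}\h^2(W_K)$ compatible with the canonical Hodge isometry of second cohomologies, so composing gives $\Theta$ in general.

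Second, the graph of $\iota\colon W_K\hookrightarrow K$ is an algebraic cycle on $K\times W_K$, so the pull-back $\iota^*\colon \h^2(K)\to \h^2(W_K)$ is a morphism in $\mathsf{Mot}$; by Lemma~\ref{lem:transcendentalIsometry} its realization restricts to an isomorphism on the transcendental parts, and hence $\iota^*$ restricts to a morphism of motives $I\colon \h^2_{\mathrm{tr}}(K)\to \h^2_{\mathrm{tr}}(W_K)$. Theorem~\ref{thm:foster} gives the standard conjectures for $\h^2(K)$ and Charles--Markman give them for all of $\h(W_K)$, so by Remark~\ref{rmk:conservative} the realization functor is conservative on the Tannakian subcategory of $\mathsf{Mot}$ generated by $\h^2(K)\oplus \h(W_K)$; since the realization of $I$ is a Hodge isomorphism, $I$ is itself an isomorphism of motives. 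The composite $I^{-1}\circ\Theta\colon \h^2_{\mathrm{tr}}(S_K)\xrightarrow{\sim}\h^2_{\mathrm{tr}}(K)$ is then the desired isomorphism, and its realization is, by construction, the Hodge isometry $H^2_{\mathrm{tr}}(S_K,\QQ)\xrightarrow{\sim} H^2_{\mathrm{tr}}(K,\QQ)(2)$.

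The main technical point I expect to spell out carefully is the passage from $M$ to $W_K$ via the birational correspondence: one needs a clean motivic formulation of the well-known fact that a birational map of hyper-K\"ahler manifolds induces an isomorphism on $H^2$ realized by the closure of the graph. A secondary issue is to verify that standard conjectures for $\h^2(K)$ together with those for $\h(W_K)$ indeed imply conservativity of realization on the joint Tannakian category containing $\h^2_{\mathrm{tr}}(K)$ and $\h^2_{\mathrm{tr}}(W_K)$, so that Remark~\ref{rmk:conservative} legitimately applies to the morphism $I$.
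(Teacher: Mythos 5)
Your proposal is correct and follows essentially the same route as the paper: the paper likewise factors the isomorphism as $t^{-1}\circ\Phi_*\circ\Theta$, where $t$ is the motivic lift of $\iota^*$ (an isomorphism by conservativity, using Theorem~\ref{thm:foster} and Charles--Markman), $\Phi_*$ comes from the birational map $M_{S_K,H}(v)\dashrightarrow W_K$ (handled via \cite[Corollary 5.2]{Huy99}), and $\Theta$ is the quasi-universal-sheaf correspondence of Remark~\ref{rmk:h^2}. The two technical points you flag are exactly the ones the paper addresses, and in the same way.
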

\begin{proof}
	For $n=2m$ or $n=2m-1$, we find as in \S\ref{subsec:construction} a $\mathrm{K}3^{[m]}$-variety $W_K$ which is isomorphic to $S_K$ for $n=2$ and birational to a moduli space of stable sheaves on $S_K$ if $n\geq 3$, and that admits an embedding $\iota\colon W_K \hookrightarrow K$ such that the restriction induces a Hodge isometry $\iota^*\colon H_{\mathrm{tr}}^2(K,\ZZ)(2)\xrightarrow{\ \sim \ } H^2_{\mathrm{tr}}(W_K,\ZZ)$. 
	
	As $\iota^*$ is clearly induced by an algebraic cycle, it is the realization of a morphism $t\colon \h^2_{\mathrm{tr}}(K) \to \h^2_{\mathrm{tr}}(W_K)$ of motives. But in fact $t$ is a morphism in the subcategory $\langle \h^2_{\mathrm{tr}}(K) \oplus \h^2_{\mathrm{tr}}(W_K)\rangle_{\mathsf{Mot}}$, which is an abelian and semisimple neutral Tannakian category as the standard conjectures hold for $W_K$ and $\h^2(K)$. Since the realization of $t$ is an isomorphism of rational Hodge structures, it follows that $t$ is an isomorphism of motives; the realization of its inverse $t^{-1}$ is a Hodge isometry $H^2_{\mathrm{tr}}(W_K,\QQ)\xrightarrow{ \ \sim \ } H^2_{\mathrm{tr}}(K,\QQ)(2)$. This concludes the proof for $n=2$, since $S_K$ is isomorphic to $W_K$ in this case.
	
	If $n\geq 3$, let $\phi\colon M_{S_K, H}(v)\dashrightarrow W_K$ be a birational map, where $M_{S_K,H}(v)$ is a smooth and projective moduli space of stable sheaves on $S_K$. Then $\phi$ induces a morphism $\Phi_*\colon \h^2(M_{S_K,H}(v))\to \h^2(W_K)$ of motives whose realization is a Hodge isometry $\phi_*\colon H^2(M_{S_K,H}(v),\ZZ)\xrightarrow{\ \sim \ } H^2(W_K,\ZZ)$, by \cite[Corollary 5.2]{Huy99}. As the standard conjectures hold for $\mathrm{K}3^{[m]}$-varieties, $\Phi_*$ is an isomorphism of motives. From Remark \ref{rmk:h^2} we obtain an isomorphism of motives $\Theta \colon \h^2_{\mathrm{tr}}(S_K) \xrightarrow{ \ \sim \ } \h^2_{\mathrm{tr}}(M_{S_K, H}(v))$, whose realization is a Hodge isometry $H^2_{\mathrm{tr}}(S_K,\ZZ)\xrightarrow{\ \sim \ } H^2_{\mathrm{tr}}(M_{S_K,H}(v),\ZZ)$.
	We conclude that the composition 
	$t^{-1} \circ\Phi_*\circ \Theta\colon \h^2_{\mathrm{tr}}(S_K) \to \h^2_{\mathrm{tr}}(K)$ is an isomorphism of motives, whose realization is the desired Hodge isometry $H^2_{\mathrm{tr}}(S_K,\QQ)\xrightarrow{ \ \sim \ } H^2_{\mathrm{tr}}(K,\QQ)(2)$.
\end{proof}

Thanks to the works of O'Grady, Markman, Voisin, Varesco, recalled in the introduction, Theorem \ref{thm:half2} yields the following result, which was stated as Theorem \ref{thm:2}. 
\begin{theorem} \label{thm:associatedK3}
	Let $K$ be a variety of $\mathrm{Kum}^n$-type with associated $\mathrm{K}3$ surface $S_K$. Then:
	\begin{enumerate}[label=(\roman*)]
		\item the Kuga-Satake correspondence for $S_K$ is algebraic;
		\item the Hodge conjecture holds for any power of $S_K$.
	\end{enumerate}
\end{theorem}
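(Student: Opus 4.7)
The proof combines the newly-established Theorem \ref{thm:half2} with the algebraicity of the Kuga-Satake correspondence for $\mathrm{Kum}^n$-varieties, the reduction Theorem \ref{prop:KSHC}, and Varesco's Hodge conjecture results for Weil fourfolds of discriminant~$1$.

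For part $(i)$, I would apply Theorem \ref{thm:half2}, which provides an algebraic cycle on $S_K \times K$ inducing a Hodge similitude $\phi\colon H^2_{\mathrm{tr}}(S_K,\QQ) \xrightarrow{\sim} H^2_{\mathrm{tr}}(K,\QQ)(2)$. Since the Kuga-Satake correspondence for $K$ is algebraic by Markman \cite{markman2019monodromy} and Voisin \cite{voisinfootnotes}, the functoriality of the Kuga-Satake construction recorded in Remark \ref{rmk:functoriality} (applied with $Y = K$ and $X = S_K$, using Remark \ref{rmk:smallerKS} to rescale the form) immediately transfers this algebraicity along $\phi$ to $S_K$.

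For part $(ii)$, I would invoke Theorem \ref{prop:KSHC}: since $(i)$ supplies its hypothesis for $S_K$, the Hodge conjecture for all powers of $S_K$ is equivalent to the Hodge conjecture for all powers of $\mathrm{KS}(S_K)$. The functoriality of \S\ref{subsec:functoriality} applied to $\phi$ yields an isogeny $\mathrm{KS}'(S_K) \sim \mathrm{KS}'(K)$, and by O'Grady \cite{O'G21} and Markman \cite{markman2019monodromy} the variety $\mathrm{KS}'(K)$ is isogenous to a power of the third intermediate Jacobian $J^3(K)$, an abelian fourfold of Weil type with discriminant~$1$. Hence $\mathrm{KS}(S_K)$ is isogenous to a power of $J^3(K)$, and the task reduces to the Hodge conjecture for all powers of $J^3(K)$, which is supplied by Varesco's work \cite{varesco} in combination with Markman's algebraicity of Hodge-Weil classes on such fourfolds.

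The substantive obstacle has already been overcome upstream: it is Theorem \ref{thm:half2}, whose proof rests on the geometric construction of the $\mathrm{K}3^{[m]}$-submanifold $\iota\colon W_K \hookrightarrow K$ and on the lattice-theoretic determination of $\iota^*$ carried out in Proposition \ref{prop:computation} (the local multiplicity computations for the exceptional divisor). With that algebraic Hodge similitude between $S_K$ and $K$ in hand, the present theorem is a clean assembly of quoted results via Kuga-Satake functoriality; the only point requiring mild care is that $J^3(K)$ need not be a \emph{very general} Weil fourfold, but Varesco's argument is designed precisely to accommodate this.
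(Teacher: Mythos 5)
Part $(i)$ of your argument coincides with the paper's: Theorem \ref{thm:half2} supplies the algebraic cycle inducing the Hodge isometry $H^2_{\mathrm{tr}}(S_K,\QQ)\xrightarrow{\ \sim\ }H^2_{\mathrm{tr}}(K,\QQ)(2)$, and Remark \ref{rmk:functoriality} transfers the algebraicity of the Kuga-Satake correspondence from $K$ (known by \cite{voisinfootnotes}) to $S_K$. That part is fine.

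Part $(ii)$, however, contains a genuine gap: the reduction is circular. You invoke Theorem \ref{prop:KSHC} to reduce the Hodge conjecture for powers of $S_K$ to the Hodge conjecture for powers of $\mathrm{KS}(S_K)$, hence for powers of the Weil fourfold $J^3(K)$, and then assert that the latter is supplied by Varesco together with Markman's algebraicity of the Hodge--Weil classes. But Markman proves the full Hodge conjecture only for the \emph{very general} Weil fourfold of discriminant $1$ (for special members there may be Hodge classes beyond the span of divisor classes and Hodge--Weil classes), and the extension to powers via \cite{varesco, Milne} likewise concerns the very general member. The Hodge conjecture for an \emph{arbitrary} Weil fourfold of discriminant $1$ and its powers is precisely Theorem \ref{thm:1} of the paper, and there it is \emph{deduced from} Theorem \ref{thm:associatedK3}.$(ii)$ together with the opposite direction of Theorem \ref{prop:KSHC}; it cannot be taken as an input here. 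If $K$ is special in its moduli, $J^3(K)$ is a special Weil fourfold and your chain of quotations does not apply to it.

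The paper's actual route avoids this: it applies \cite[Theorem 0.2]{varesco} directly to the $4$-dimensional families of K3 surfaces $S_K$ (of general Picard rank $16$, with Kuga-Satake varieties isogenous to powers of Weil fourfolds of discriminant $1$). Once the Kuga-Satake correspondence is known to be algebraic for \emph{all} members of such a family --- which is exactly what part $(i)$ provides --- Varesco's theorem yields the Hodge conjecture for all powers of every member, special ones included. Your remark that Varesco's argument ``is designed precisely to accommodate'' the non-very-general case is true of this K3-side statement, not of the abelian-fourfold statement your reduction requires. Theorem \ref{prop:KSHC} is then used afterwards, in the proof of Theorem \ref{thm:main'}, to push the conclusion from the K3 surfaces back to the Weil fourfolds --- the direction opposite to the one you propose.
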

\begin{proof}
	By Theorem \ref{thm:half2} there exists an algebraic cycle inducing a Hodge isometry 
	\[
	H^2_{\mathrm{tr}}(S_K,\QQ)\xrightarrow{\ \sim \ } H^2_{\mathrm{tr}}(K,\QQ)(2). 
	\]
    The Kuga-Satake Hodge conjecture (Conjecture \ref{conj:KSHC}) holds for $K$ by \cite[Theorem 0.5]{voisinfootnotes}. By Remark~\ref{rmk:functoriality}, the same conclusion holds for $S_K$, and we obtain $(i)$. 
	
	The K3 surfaces $S_K$ come in countably many $4$-dimensional families of general Picard rank $16$, corresponding to the polarized families of varieties of generalized Kummer type. By construction (see \S\ref{subsec:functoriality}) the Kuga-Satake variety of $S_K$ is isogenous to a power of $\mathrm{KS}(K)$, and, hence, to a power of a Weil fourfold with discriminant~$1$ by \cite{markman2019monodromy, O'G21}. As Conjecture \ref{conj:KSHC} holds for any of the K3 surfaces~$S_K$, we can apply \cite[Theorem 0.2]{varesco} to deduce that the Hodge conjecture holds for all powers of the K3 surfaces $S_K$.
\end{proof}

\subsection{} We finally complete the proof of our results.
\begin{theorem}\label{thm:K3surfaces}
	Let $S$ be a projective $\mathrm{K}3$ surface such that there exists an isometric embedding $H^2_{\mathrm{tr}}(S,\QQ) \hookrightarrow \bigl(\mathrm{U}^{\oplus 3} \oplus \langle -m\rangle\bigr) \otimes_{\ZZ} \QQ$ for some positive integer~$m$. Then:
	\begin{enumerate}[label=(\roman*)]
		\item the Kuga-Satake correspondence for $S$ is algebraic;
		\item the Hodge conjecture holds for any power of $S$.
	\end{enumerate}
\end{theorem}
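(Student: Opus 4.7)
The plan is to reduce the statement to Theorem \ref{thm:associatedK3} by replacing $S$ with a K3 surface of the form $S_K$ via an algebraic rational Hodge isometry, using the theorem of Buskin \cite{Buskin} and Huybrechts \cite{huybrechtsMotives} on the algebraicity of rational Hodge isometries between transcendental lattices of K3 surfaces.

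First, by Lemma \ref{lem:isogenies}, the hypothesis on $S$ is equivalent to the existence of a rational Hodge isometry
\[
\phi\colon H^2_{\mathrm{tr}}(S,\QQ) \xrightarrow{\ \sim \ } H^2_{\mathrm{tr}}(S_K,\QQ),
\]
where $S_K$ is the K3 surface associated to some projective variety $K$ of $\mathrm{Kum}^n$-type with $n \geq 2$. The Buskin--Huybrechts theorem then implies that $\phi$ is induced by an algebraic cycle on $S \times S_K$. This is the essential input beyond the results established earlier in the paper, and what allows one to pass from the abstract Hodge-theoretic hypothesis on $S$ to the geometric setup of Theorem~\ref{thm:associatedK3}.

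To prove $(i)$, I apply Theorem \ref{thm:associatedK3}$(i)$, which guarantees that the Kuga-Satake correspondence is algebraic for $S_K$, and then invoke the functoriality of the Kuga-Satake construction (Remark \ref{rmk:functoriality}): since $\phi$ is algebraic, the algebraicity of the Kuga-Satake correspondence transfers from $S_K$ to $S$.

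To prove $(ii)$, I observe that the standard conjectures hold for the surfaces $S$ and $S_K$, hence for their product; by Remark \ref{rmk:conservative}, the realization functor is conservative on $\langle \h(S \times S_K) \rangle_{\mathsf{Mot}}$, so the algebraic cycle inducing $\phi$ defines an isomorphism of motives $\Phi\colon \h^2_{\mathrm{tr}}(S) \xrightarrow{\ \sim \ } \h^2_{\mathrm{tr}}(S_K)$. Decomposing $\h(S)$ into $\h^2_{\mathrm{tr}}(S)$ and a sum of Tate motives (coming from $\h^0$, $\h^4$ and $\h^2_{\mathrm{alg}}$), each of which lies in $\langle \h(S_K) \rangle_{\mathsf{Mot}}$, I obtain an inclusion $\langle \h(S) \rangle_{\mathsf{Mot}} \subset \langle \h(S_K) \rangle_{\mathsf{Mot}}$, compatible with the analogous inclusion on the Hodge side. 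Theorem \ref{thm:associatedK3}$(ii)$ asserts that the canonical inclusion $\MT(H^\bullet(S_K,\QQ)) \hookrightarrow \mathrm{G}_{\mathsf{Mot}}(\h(S_K))$ is an isomorphism; the Tannakian formalism translates the above inclusion of subcategories into compatible quotient maps of reductive groups, and equality is preserved under such quotients. One thus obtains $\MT(H^\bullet(S,\QQ)) = \mathrm{G}_{\mathsf{Mot}}(\h(S))$, which is precisely the Hodge conjecture for all powers of $S$. The only nontrivial step is the motivic upgrade of $\phi$ to $\Phi$, which is routine given the standard conjectures for $S \times S_K$; the remainder is bookkeeping within the Tannakian framework.
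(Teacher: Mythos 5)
Your proposal is correct and follows essentially the same route as the paper: Lemma \ref{lem:isogenies} to produce the rational Hodge isometry with some $S_K$, Buskin--Huybrechts for its algebraicity, Remark \ref{rmk:functoriality} plus Theorem \ref{thm:associatedK3}(i) for part (i), and Theorem \ref{thm:associatedK3}(ii) for part (ii). The only difference is that you spell out the motivic upgrade of $\phi$ and the Tannakian transfer of $\MT = \mathrm{G}_{\mathsf{Mot}}$ explicitly, where the paper simply cites that the motives of $S$ and $S_K$ are isomorphic by \cite{Buskin, huybrechtsMotives}; your bookkeeping is a valid substitute.
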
 
\begin{proof}
	Let $S$ be as in the statement.
	By Lemma \ref{lem:isogenies}, for some $n\geq 2$ there exist a variety $K$ of $\mathrm{Kum}^n$-type and a rational Hodge isometry $\phi\colon H^2_{\mathrm{tr}}(S,\QQ) \xrightarrow{\ \sim \ } H^2_{\mathrm{tr}}(S_K,\QQ)$. 
	By \cite{Buskin, huybrechtsMotives}, $\phi$ is algebraic and the motives of $S$ and $S_K$ are isomorphic.
	Therefore, by Remark~\ref{rmk:functoriality} and Theorem \ref{thm:associatedK3}.$(i)$, the Kuga-Satake correspondence for~$S$ is algebraic, and, by Theorem \ref{thm:associatedK3}.$(ii)$, the Hodge conjecture holds for any power of~$S$. 
\end{proof}	

We next prove Theorem \ref{thm:1}, which we state again below.

\begin{theorem}\label{thm:main'}
	Let $A$ be an abelian fourfold of Weil type with discriminant $1$. Then the Hodge conjecture holds for $A$ and any of its powers.
\end{theorem}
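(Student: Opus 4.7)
The plan is to deduce Theorem \ref{thm:main'} by realizing the given Weil fourfold $A$ as an isogeny factor of the Kuga-Satake variety of one of our K3 surfaces $S_K$, and then invoking the chain Theorem \ref{thm:associatedK3} $\Rightarrow$ Theorem \ref{prop:KSHC} to transport the Hodge conjecture from powers of $S_K$ to powers of $A$.

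More precisely, the first step is to find, for the abelian fourfold $A$ of Weil type with discriminant~$1$, some $n\geq 2$ and a projective variety $K$ of $\mathrm{Kum}^n$-type such that the third intermediate Jacobian $J^3(K)$ is isogenous to $A$. This is exactly where I would lean on the results of O'Grady \cite{O'G21}, Markman \cite{markman2019monodromy} and van Geemen recalled in the introduction: the period map for polarized families of $\mathrm{Kum}^n$-varieties surjects (up to isogeny) onto the moduli of polarized Weil fourfolds with discriminant~$1$. This existence statement is the only delicate input here, but it is available in the literature, so no new argument is needed.

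Once $K$ is fixed, I would combine the known relations between the Kuga-Satake varieties in sight. By \cite{O'G21, markman2019monodromy}, $\mathrm{KS}(K)$ is isogenous to a power of $J^3(K)$, hence to a power of $A$. The associated K3 surface $S_K$ comes with the rational Hodge isometry $H^2_{\mathrm{tr}}(S_K,\QQ)\xrightarrow{\sim} H^2_{\mathrm{tr}}(K,\QQ)(2)$ of Theorem \ref{thm:half2}, so the functoriality of the Kuga-Satake construction recalled in \S\ref{subsec:functoriality} implies that the Kuga-Satake varieties $\mathrm{KS}'(S_K)$ and $\mathrm{KS}'(K)$ are isogenous; passing to the larger Kuga-Satake varieties attached to the full primitive cohomology (Remark \ref{rmk:smallerKS}), we find that $\mathrm{KS}(S_K)$ is isogenous to a power of $A$. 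In particular $A$ is an isogeny factor of $\mathrm{KS}(S_K)$.

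For the final step, Theorem \ref{thm:associatedK3} gives both that the Kuga-Satake correspondence is algebraic for $S_K$ and that the Hodge conjecture holds for every power of~$S_K$. Theorem \ref{prop:KSHC} then lets us upgrade this to the Hodge conjecture for every power of $\mathrm{KS}(S_K)$. Since $A$ is an isogeny factor of $\mathrm{KS}(S_K)$, every power $A^r$ is, up to isogeny, a direct factor of a suitable power of $\mathrm{KS}(S_K)$; the Hodge conjecture is stable under taking isogeny factors, so it follows for every power of~$A$, as required. The main obstacle in this proof is really only the input of the first paragraph—the surjectivity up to isogeny of the $J^3$-period onto discriminant-$1$ Weil fourfolds—everything afterwards is a straightforward assembly of the results already established in the paper.
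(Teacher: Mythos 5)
Your proposal is correct and follows essentially the same route as the paper: realize $A$ up to isogeny as $J^3(K)$ for some $\mathrm{Kum}^n$-variety $K$ via O'Grady--Markman, identify $\mathrm{KS}(S_K)$ with a power of $A$ up to isogeny through the functoriality of the Kuga-Satake construction, and then combine Theorem \ref{thm:associatedK3} with Theorem \ref{prop:KSHC}. The only difference is that you spell out the intermediate isogeny comparisons and the stability of the Hodge conjecture under isogeny factors, which the paper leaves implicit.
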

\begin{proof}
	By \cite{O'G21} and \cite{markman2019monodromy}, there exists a $\mathrm{Kum}^n$-variety $K$ such that $A$ is isogenous to the intermediate Jacobian $J^3(K)$; moreover, the Kuga-Satake variety of $K$ is isogenous to a power of $A$.
	The Kuga-Satake variety of the K3 surface $S_K$ associated to $K$ is thus also isogenous to a power of $A$. By Theorem \ref{thm:associatedK3}, the Kuga-Satake Hodge conjecture holds for $S_K$ and the Hodge conjecture holds for any power of $S_K$. Therefore, Theorem~\ref{prop:KSHC} implies that the Hodge conjecture holds for~$A$ and all of its powers.	
\end{proof}

We conclude with the following corollary, which follows from a result of Varesco \cite{varesco2023hodge}.
\begin{corollary} \label{cor:corollary'}
	Let $S$ be a projective K3 surface and let $K$ be a projective variety of $\mathrm{Kum}^n$-type. Assume that $\phi\colon H^2_{\mathrm{tr}}(S_K,\QQ) \xrightarrow{\ \sim \ } H^2_{\mathrm{tr}}(K,\QQ)(k)$ is a Hodge isometry, where on the right-hand side the form is multiplied by some positive $k\in \QQ$. Then $\phi$ is induced by an algebraic cycle on $S\times K$.
\end{corollary}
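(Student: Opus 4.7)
The plan is to combine Theorem \ref{thm:half2} with the algebraicity of the Kuga-Satake correspondence for $S$ and $S_K$, mediated by the functoriality of Kuga-Satake recalled in \S\ref{subsec:functoriality}. I would first apply Theorem \ref{thm:half2} to produce an algebraic cycle $\Gamma$ on $S_K\times K$ whose cohomological realization is a Hodge isometry $\alpha\colon H^2_{\mathrm{tr}}(S_K,\QQ)\xrightarrow{\ \sim\ } H^2_{\mathrm{tr}}(K,\QQ)(2)$. The composition $\psi\coloneqq \alpha^{-1}\circ \phi\colon H^2_{\mathrm{tr}}(S,\QQ)\xrightarrow{\ \sim\ }H^2_{\mathrm{tr}}(S_K,\QQ)$ is then a Hodge similitude of ratio $2/k$ between the transcendental lattices of two projective K3 surfaces. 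It therefore suffices to find an algebraic cycle on $S\times S_K$ realizing $\psi$, since composing with $\Gamma$ will then yield the desired algebraic cycle on $S\times K$ inducing $\phi=\alpha\circ \psi$.

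To produce the cycle inducing $\psi$, the key observation is that the hypotheses force the Kuga-Satake correspondence to be algebraic on both sides. By Lemma \ref{lem:isogenies}, the existence of $\psi$ yields an isometric embedding $H^2_{\mathrm{tr}}(S,\QQ)\hookrightarrow (\mathrm{U}^{\oplus 3}\oplus \langle -m\rangle)\otimes_{\ZZ}\QQ$ for some positive integer $m$, so Theorem \ref{thm:K3surfaces} shows that $\mu'_S$ is algebraic, and Theorem \ref{thm:associatedK3} gives algebraicity of $\mu'_{S_K}$. The functoriality of Kuga-Satake applied to $\psi$ produces an isogeny $\Psi\colon \mathrm{KS}'(S)\to \mathrm{KS}'(S_K)$ satisfying $\Psi_*\circ \mu'_S=\mu'_{S_K}\circ \psi$, and the graph of $\Psi\times \Psi$ is an algebraic cycle inducing $\Psi_*$ on cohomology. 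Because $\mu'_{S_K}$ is algebraic and the standard conjectures hold for $S_K$ and its Kuga-Satake variety, the motive $\h^2_{\mathrm{tr}}(S_K)$ is an algebraic direct summand of $\h^2(\mathrm{KS}'(S_K)^2)$, so one can construct an algebraic left inverse $\pi$ of $\mu'_{S_K}$ as an orthogonal projector with respect to a polarization. The composition $\pi\circ \Psi_*\circ \mu'_S$ is then an algebraic correspondence on $S\times S_K$ whose cohomological realization equals $\pi\circ \mu'_{S_K}\circ \psi=\psi$; this is the content of Varesco's result \cite{varesco2023hodge}.

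The main obstacle is the construction of the algebraic projector $\pi$ onto $\h^2_{\mathrm{tr}}(S_K)$ inside $\h^2(\mathrm{KS}'(S_K)^2)$. This step relies on the motivic formalism afforded by the standard conjectures for $S_K$ and its Kuga-Satake variety and on the algebraicity of $\mu'_{S_K}$, and it is precisely where Varesco's extension goes beyond Buskin-Huybrechts. Once $\pi$ is in hand, everything else is a formal composition of algebraic correspondences: Theorem \ref{thm:half2} supplies the geometric bridge between $S_K$ and $K$, while Theorem \ref{thm:K3surfaces} ensures the algebraicity of $\mu'_S$ needed to set up the composition in the first place.
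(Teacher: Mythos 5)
Your proposal is correct, but it routes the argument differently from the paper. The paper's proof is a direct application of Varesco's \cite[Corollary~4.6]{varesco2023hodge} to the pair $(S,K)$: one checks its three hypotheses (the Kuga-Satake Hodge conjecture for $S$, obtained from the isometric embedding $H^2_{\mathrm{tr}}(S,\QQ)\hookrightarrow \mathrm{U}^{\oplus 3}_{\QQ}\oplus\langle -m\rangle_{\QQ}$ via Theorem~\ref{thm:2}; the Kuga-Satake Hodge conjecture for $K$ by Voisin; and the degree-$2$ Lefschetz standard conjecture for $K$ by Foster) and concludes. You instead first use Theorem~\ref{thm:half2} to replace $K$ by $S_K$, reducing to the Hodge similitude $\psi=\alpha^{-1}\circ\phi$ between two K3 surfaces, and then run the Kuga-Satake transfer argument (algebraic $\mu'_S$, the isogeny $\Psi$ from functoriality, and an algebraic splitting $\pi$ of $\mu'_{S_K}$) for that K3--K3 similitude. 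This is sound: the inverse of the motivic isomorphism of Theorem~\ref{thm:half2} is algebraic, the standard conjectures hold for $S_K$ and its Kuga-Satake variety so the projector $\pi$ exists as you describe, and your ratio computation for $\psi$ is right. What your route buys is that the hyper-K\"ahler side of Varesco's criterion is only ever invoked for K3 surfaces, where the Lefschetz standard conjecture is classical; Foster's theorem still enters, but only through Theorem~\ref{thm:half2}. What it costs is that you essentially re-prove the K3 case of Varesco's Corollary~4.6 by hand (the construction of $\pi$ and the composition $\pi\circ\Psi_*\circ\mu'_S$), whereas the paper cites the general statement once. One small point: you invoke Lemma~\ref{lem:isogenies} to get the isometric embedding of $H^2_{\mathrm{tr}}(S,\QQ)$, but that lemma is stated for Hodge isometries while your $\psi$ is only a similitude; the embedding is nonetheless immediate from the hypothesis on $\phi$ itself (as in the paper's proof), since $\mathrm{U}(k)\otimes_{\ZZ}\QQ\cong\mathrm{U}\otimes_{\ZZ}\QQ$ and a rational rescaling of $\langle -1\rangle_{\QQ}$ is isomorphic to some $\langle -m\rangle_{\QQ}$, so this is a presentational slip rather than a gap.
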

\begin{proof}
	As $\Lambda_{\mathrm{Kum}^n} = \mathrm{U}^{\oplus 3}\oplus \langle -2n-2\rangle$ and $\mathrm{U}(k)\otimes_{\ZZ} \QQ \cong \mathrm{U}\otimes_{\ZZ}\QQ$ for any $0\neq k \in \QQ$, there exists by hypothesis an isometric embedding 
	$$H^2_{\mathrm{tr}}(S_K,\QQ)\hookrightarrow \mathrm{U}^{\oplus 3}_{\QQ}\oplus \langle -m\rangle_{\QQ},$$
	for some positive integer $m$.
	Hence, by Theorem \ref{thm:2}, the Kuga-Satake Hodge conjecture holds for $S$. Moreover, this conjecture holds for $K$ by \cite[Theorem~0.5]{voisinfootnotes}, and the Lefschetz standard conjecture in degree $2$ for $K$ is proven in \cite[Theorem~1.1]{foster}. 
 	We can therefore apply \cite[Corollary~4.6]{varesco2023hodge} to obtain that $\phi$ is algebraic.
	\end{proof}

\bibliographystyle{smfplain}
\bibliography{bibliographyNoURL}{}

\providecommand{\bysame}{\leavevmode ---\ }
\providecommand{\og}{``}
\providecommand{\fg}{''}
\providecommand{\smfandname}{\&}
\providecommand{\smfedsname}{\'eds.}
\providecommand{\smfedname}{\'ed.}
\providecommand{\smfmastersthesisname}{M\'emoire}
\providecommand{\smfphdthesisname}{Th\`ese}
\begin{thebibliography}{10}

\bibitem{Addington2016}
{\scshape N.~Addington} -- {\og On two rationality conjectures for cubic
  fourfolds\fg}, \emph{Math. Res. Lett.} \textbf{23} (2016), no.~1, p.~1--13.

\bibitem{Ancona2023}
{\scshape G.~Ancona, M.~Cavicchi, R.~Laterveer {\normalfont \smfandname}
  G.~Sacc\`a} -- {\og {Relative and absolute Lefschetz standard conjectures for
  some Lagrangian fibrations}\fg}, \emph{Journal of the London Mathematical
  Society} \textbf{111} (2025), no.~4, p.~e70133.

\bibitem{Andre1996}
{\scshape Y.~Andr{\'e}} -- {\og On the {S}hafarevich and {T}ate conjectures for
  hyper-k{\"a}hler varieties\fg}, \emph{Mathematische Annalen} \textbf{305}
  (1996), no.~1, p.~205--248.

\bibitem{andre1996Motives}
\bysame , {\og Pour une th{\'e}orie inconditionnelle des motifs\fg},
  \emph{Publications Math{\'e}matiques de l'IH{\'E}S} \textbf{83} (1996),
  no.~1, p.~5--49.

\bibitem{andre}
\bysame , \emph{Une introduction aux motifs (motifs purs, motifs mixtes,
  p\'{e}riodes)}, Panoramas et Synth\`eses, no.~17, Soci\'{e}t\'{e}
  {M}ath\'{e}matique de {F}rance, {P}aris, 2004.

\bibitem{Ara06}
{\scshape D.~Arapura} -- {\og Motivation for {H}odge cycles\fg}, \emph{Adv.
  Math.} \textbf{207} (2006), no.~2, p.~762--781.

\bibitem{BM14a}
{\scshape A.~Bayer {\normalfont \smfandname} E.~Macr\`{i}} -- {\og {M}{M}{P}
  for moduli of sheaves on {K}3s via wall-crossing: nef and movable cones,
  {L}agrangian fibrations\fg}, \emph{Invent. Math.} \textbf{198} (2014), no.~3,
  p.~505--590.

\bibitem{BM14b}
\bysame , {\og Projectivity and birational geometry of {B}ridgeland moduli
  spaces\fg}, \emph{J. Amer. Math. Soc.} \textbf{27} (2014), no.~3,
  p.~707--752.

\bibitem{beauville1983varietes}
{\scshape A.~Beauville} -- {\og Vari{\'e}t{\'e}s {K}{\"{a}}hl\'eriennes dont la
  premi{\`e}re classe de {C}hern est nulle\fg}, \emph{J. Differential Geom.}
  \textbf{18} (1983), no.~4, p.~755--782.

\bibitem{boissiere2011higher}
{\scshape S.~Boissi{\`e}re, M.~Nieper-Wi{\ss}kirchen {\normalfont \smfandname}
  A.~Sarti} -- {\og Higher dimensional {E}nriques varieties and automorphisms
  of generalized {K}ummer varieties\fg}, \emph{Journal de Math{\'e}matiques
  Pures et Appliqu{\'e}es} \textbf{95} (2011), no.~5, p.~553--563.

\bibitem{bolognesi}
{\scshape M.~Bolognesi {\normalfont \smfandname} R.~Laterveer} -- {\og A
  9-dimensional family of {K}3 surfaces with finite dimensional motive\fg},
  \emph{Rend. Circ. Mat. Palermo, II. Ser} \textbf{73} (2024), p.~2313--2331.

\bibitem{Briancon77}
{\scshape J.~Brian\c{c}on} -- {\og Description de
  $\mathrm{Hilb}^n\mathbb{C}\{x,y\}$\fg}, \emph{Invent. Math.} \textbf{41}
  (1977), p.~45--89.

\bibitem{Bri08}
{\scshape T.~Bridgeland} -- {\og Stability conditions on {K}3 surfaces\fg},
  \emph{Duke Math. J.} \textbf{141} (2008), no.~2, p.~241--291.

\bibitem{Bue18}
{\scshape T.-H. B{\"{u}}lles} -- {\og Motives of moduli spaces on {K}3 surfaces
  and of special cubic fourfolds\fg}, \emph{Manuscripta Math.} \textbf{161}
  (2020), p.~109--124.

\bibitem{Buskin}
{\scshape N.~Buskin} -- {\og Every rational {H}odge isometry between two {K}3
  surfaces is algebraic\fg}, \emph{J. Reine Angew. Math.} \textbf{2019} (2019),
  no.~755, p.~127--150.

\bibitem{Camere}
{\scshape C.~Camere} -- {\og Symplectic involutions of holomorphic symplectic
  four-folds\fg}, \emph{Bulletin of the London Mathematical Society}
  \textbf{44} (2012), no.~4, p.~687--702.

\bibitem{CM13}
{\scshape F.~Charles {\normalfont \smfandname} E.~Markman} -- {\og The standard
  conjectures for holomorphic symplectic varieties deformation equivalent to
  {H}ilbert schemes of {K}3 surfaces\fg}, \emph{Compos. Math.} \textbf{149}
  (2013), no.~3, p.~481--494.

\bibitem{deligne1971conjecture}
{\scshape P.~Deligne} -- {\og La conjecture de {W}eil pour les surfaces
  {K}3\fg}, \emph{Invent. Math.} \textbf{15} (1971), no.~3, p.~206--226.

\bibitem{deligneMilne}
{\scshape P.~Deligne {\normalfont \smfandname} J.~S. Milne} -- {\og Tannakian
  categories\fg}, in \emph{Hodge cycles, motives, and Shimura varieties},
  Lecture Notes in Mathematics, vol. 900, Springer-Verlag, 1982, p.~101--228.

\bibitem{floccariHCKum3}
{\scshape S.~Floccari} -- {\og The {H}odge and {T}ate conjectures for
  hyper-{K}\"ahler sixfolds of generalized {K}ummer type\fg}, \emph{arXiv
  preprint: 2308.02267} (2023).

\bibitem{floccari22}
\bysame , {\og On the motive of {O}'{G}rady's six dimensional
  hyper-{K}\"{a}hler varieties\fg}, \emph{\'Epijournal de G\'eom\'etrie
  Alg\'ebrique} \textbf{7} (2023), no.~4.

\bibitem{floccariKum3}
\bysame , {\og Sixfolds of generalized {K}ummer type and {K}3 surfaces\fg},
  \emph{Compositio Mathematica} \textbf{160} (2024), no.~2, p.~388--410.

\bibitem{FFZ}
{\scshape S.~Floccari, L.~Fu {\normalfont \smfandname} Z.~Zhang} -- {\og On the
  motive of {O}'{G}rady's ten-dimensional hyper-{K}{\"a}hler varieties\fg},
  \emph{Communications in Contemporary Mathematics} \textbf{23} (2021), no.~4.

\bibitem{floccariVaresco}
{\scshape S.~Floccari {\normalfont \smfandname} M.~Varesco} -- {\og Algebraic
  cycles on hyper-{K}\"ahler varieties of generalized {K}ummer type\fg},
  \emph{Mathematische Annalen} \textbf{391} (2025), p.~4443--4453.

\bibitem{foster}
{\scshape J.~Foster} -- {\og The {L}efschetz standard conjectures for
  {I}{H}{S}{M}s of generalized {K}ummer deformation type in certain
  degrees\fg}, \emph{European Journal of Mathematics} \textbf{10} (2024),
  no.~34.

\bibitem{fujiki1987}
{\scshape A.~Fujiki} -- {\og On the de {R}ham {C}ohomology {G}roup of a
  {C}ompact {K}{\"a}hler {S}ymplectic {M}anifold\fg}, in \emph{Algebraic
  Geometry, Sendai, 1985} (T.~Oda, \smfedname), Mathematical Society of Japan,
  1987, p.~105--165.

\bibitem{fulton}
{\scshape W.~Fulton} -- \emph{Intersection {T}heory, second edition}, Springer
  New York, NY, 2012.

\bibitem{vanGeemen}
{\scshape B.~van Geemen} -- {\og Kuga-{S}atake {V}arieties and the {H}odge
  {C}onjecture\fg}, in \emph{The Arithmetic and Geometry of Algebraic Cycles}
  (B.~B. Gordon, J.~D. Lewis, S.~M{\"u}ller-Stach, S.~Saito {\normalfont
  \smfandname} N.~Yui, \smfedsname), Springer Netherlands, 2000, p.~51--82.

\bibitem{vangeemenSpinor}
\bysame , {\og Fourfolds of {W}eil type and the spinor map\fg},
  \emph{Expositiones Mathematicae} \textbf{41} (2023), no.~2, p.~418--447.

\bibitem{grothendieck1969standard}
{\scshape A.~Grothendieck} -- {\og Standard conjectures on algebraic
  cycles\fg}, in \emph{Algebraic geometry. Int. Colloq., Tata Institute, Bombay
  1968}, Oxford Univ. Press, London, 1969, p.~193--199.

\bibitem{hassettTschinkel}
{\scshape B.~Hassett {\normalfont \smfandname} Y.~Tschinkel} -- {\og Hodge
  theory and {L}agrangian planes on generalized {K}ummer fourfolds\fg},
  \emph{Mosc. Math. J.} \textbf{13} (2013), no.~1, p.~33--56.

\bibitem{Huy99}
{\scshape D.~Huybrechts} -- {\og Compact hyper-{K}\"ahler manifolds: basic
  results\fg}, \emph{Invent. Math.} \textbf{135} (1999), no.~1, p.~63--113.

\bibitem{huyK3}
\bysame , \emph{Lectures on {K}3 surfaces}, vol. 158, Cambridge University
  Press, 2016.

\bibitem{huybrechtsMotives}
\bysame , {\og Motives of isogenous {K}3 surfaces\fg}, \emph{Commentarii
  Mathematici Helvetici} \textbf{94} (2019), p.~445--458.

\bibitem{huybrechts2010geometry}
{\scshape D.~Huybrechts {\normalfont \smfandname} M.~Lehn} -- \emph{The
  {G}eometry of {M}oduli {S}paces of {S}heaves}, $2^{nd}$ \smfedname, Cambridge
  University Press, 2010.

\bibitem{ILP}
{\scshape C.~Ingalls, A.~Logan {\normalfont \smfandname} O.~Patashnick} -- {\og
  Explicit coverings of families of elliptic surfaces by squares of curves\fg},
  \emph{Math. Z.} \textbf{302} (2022), p.~1191--1238.

\bibitem{Jan92}
{\scshape U.~Jannsen} -- {\og Motives, numerical equivalence, and
  semi-simplicity\fg}, \emph{Invent. Math.} \textbf{107} (1992), no.~3,
  p.~447--452.

\bibitem{kaledin1998}
{\scshape D.~Kaledin {\normalfont \smfandname} M.~Verbitsky} -- {\og {Partial
  resolutions of Hilbert type, Dynkin diagrams, and generalized Kummer
  varieties}\fg}, \emph{arXiv preprint:math/9812078} (1998).

\bibitem{KMO}
{\scshape L.~Kamenova, G.~Mongardi {\normalfont \smfandname} A.~Oblomkov} --
  {\og Symplectic involutions of {K}$3^{[n]}$ type and {K}ummer $n$ type
  manifolds\fg}, \emph{Bulletin of the London Mathematical Society} \textbf{54}
  (2022), no.~3, p.~894--909.

\bibitem{kleiman}
{\scshape S.~Kleiman} -- {\og Algebraic cycles and the {W}eil conjectures\fg},
  in \emph{Dix expos{\'e}s sur la cohomologie des sch{\'e}mas}, North-Holland,
  1968, p.~359--386.

\bibitem{KUGA1967}
{\scshape M.~Kuga {\normalfont \smfandname} I.~Satake} -- {\og Abelian
  {V}arieties {A}ttached to {P}olarized {K}3-{S}urfaces\fg},
  \emph{Mathematische Annalen} \textbf{169} (1967), p.~239--242.

\bibitem{markman2019monodromy}
{\scshape E.~Markman} -- {\og The monodromy of generalized {K}ummer varieties
  and algebraic cycles on their intermediate {J}acobians\fg}, \emph{J. Eur.
  Math. Soc.} \textbf{25} (2023), no.~1, p.~231--321.

\bibitem{Milne}
{\scshape J.~S. Milne} -- {\og {The Tate and Standard Conjectures for Certain
  Abelian Varieties}\fg}, \emph{arXiv preprint: 2112.12815} (2022).

\bibitem{mongardiwandel}
{\scshape G.~Mongardi {\normalfont \smfandname} M.~Wandel} -- {\og
  Automorphisms of {O}'{G}rady's manifolds acting trivially on cohomology\fg},
  \emph{Algebraic Geometry} \textbf{4} (2017), no.~1, p.~104--119.

\bibitem{moonen1999notes}
{\scshape B.~Moonen} -- {\og Notes on {M}umford-{T}ate groups\fg},  (1999).

\bibitem{moonenZahrinHodge}
{\scshape B.~Moonen {\normalfont \smfandname} Y.~Zarhin} -- {\og {Hodge classes
  and Tate classes on simple abelian fourfolds}\fg}, \emph{Duke Mathematical
  Journal} \textbf{77} (1995), no.~3, p.~553 -- 581.

\bibitem{Morrison1984}
{\scshape D.~Morrison} -- {\og On {K}3 surfaces with large {P}icard number\fg},
  \emph{Invent. Math.} \textbf{75} (1984), p.~105--122.

\bibitem{morrison1985}
\bysame , {\og The {K}uga-{S}atake variety of an abelian surface\fg},
  \emph{Journal of Algebra} \textbf{92} (1985), no.~2, p.~454--476.

\bibitem{mukai1987moduli}
{\scshape S.~Mukai} -- {\og On the moduli space of bundles on {K}3 surfaces.
  {I}\fg}, in \emph{Vector bundles on algebraic varieties (Bombay, 1984), Tata
  Inst.}, vol.~11, Fund. Res. Stud. Math, 1987, p.~341--413.

\bibitem{Nikulin}
{\scshape V.~Nikulin} -- {\og On {K}ummer surfaces\fg}, \emph{Math. USSR
  Izvestiya} \textbf{9} (1975), no.~2, p.~261--275.

\bibitem{O'G97}
{\scshape K.~G. O'Grady} -- {\og The weight-two {H}odge structure of moduli
  spaces of sheaves on a {$K3$} surface\fg}, \emph{J. Algebraic Geom.}
  \textbf{6} (1997), no.~4, p.~599--644.

\bibitem{O'G21}
\bysame , {\og Compact {T}ori {A}ssociated to {H}yperk\"{a}hler {M}anifolds of
  {K}ummer {T}ype\fg}, \emph{Int. Math. Res. Not.} \textbf{2021} (2019),
  no.~16, p.~12356--12419.

\bibitem{oguiso2020no}
{\scshape K.~Oguiso} -- {\og No cohomologically trivial nontrivial automorphism
  of generalized {K}ummer manifolds\fg}, \emph{Nagoya Mathematical Journal}
  \textbf{239} (2020), p.~110--122.

\bibitem{paranjape}
{\scshape K.~Paranjape} -- {\og Abelian varieties associated to certain {K3}
  surfaces\fg}, \emph{Compositio Math.} \textbf{68} (1988), no.~1, p.~11--22.

\bibitem{varesco2023hodge}
{\scshape M.~Varesco} -- {\og Hodge similarities, algebraic classes, and
  {K}uga-{S}atake varieties\fg}, \emph{Math. Z.} \textbf{305} (2023), no.~69.

\bibitem{varesco}
\bysame , {\og {The Hodge Conjecture for Powers of K3 Surfaces of Picard Number
  16}\fg}, \emph{Michigan Mathematical Journal} (2024), p.~1--32, Advance
  Publication.

\bibitem{VerbitskyTrianalytic}
{\scshape M.~Verbitsky} -- {\og {Tri-analytic Subvarieties of Hyperkähler
  Manifolds}\fg}, \emph{Geometric and functional analysis} \textbf{5} (1995),
  no.~1, p.~92--104.

\bibitem{voisinfootnotes}
{\scshape C.~Voisin} -- {\og Footnotes to papers of {O}'{G}rady and
  {M}arkman\fg}, \emph{Math. Z.} \textbf{300} (2022), p.~3405--3416.

\bibitem{Yos01}
{\scshape K.~Yoshioka} -- {\og Moduli spaces of stable sheaves on abelian
  surfaces\fg}, \emph{Math. Ann.} \textbf{321} (2001), no.~4, p.~817--884.

\end{thebibliography}

\end{document}